\documentclass[a4paper,10pt]{article}

\usepackage[fleqn]{amsmath}
\usepackage{amssymb}
\usepackage{amsthm}
\usepackage{stmaryrd}
\usepackage{bbm}
\usepackage[colorlinks]{hyperref}
\usepackage{mathtools}
\usepackage[numbers]{natbib}
\mathtoolsset{showonlyrefs}
\usepackage{enumitem}
\usepackage{subcaption}
\usepackage{tikz}
\usepackage{dsfont}
\usepackage[T1]{fontenc}
\usepackage[utf8]{inputenx}
\usepackage{graphicx}
\usepackage{geometry}
\usepackage{leftidx}
\usepackage{color}
\usepackage{pifont}
\usepackage{babel}

\usepackage{thmtools}

\renewcommand{\P}{\mathbb{P}}

\newcommand{\bE}{\ensuremath{\mathbb{E}}}

\newcommand{\bN}{\ensuremath{\mathbb{N}}}

\newcommand{\bP}{\ensuremath{\mathbb{P}}}

\newcommand{\bR}{\ensuremath{\mathbb{R}}}

\newcommand{\bZ}{\ensuremath{\mathbb{Z}}}

\newcommand{\ind}{\ensuremath{\mathbbm{1}}}

\newcommand{\cF}{\ensuremath{\mathcal{F}}}

\usepackage{enumitem, hyperref}
\makeatletter
\def\namedlabel#1#2{\begingroup #2 \def\@currentlabel{#2} \phantomsection\label{#1}\endgroup }
\makeatother

\theoremstyle{plain}
\newtheorem{theorem}{Theorem}[section]  
\newtheorem{proposition}[theorem]{Proposition}

\newtheorem{lemma}[theorem]{Lemma}

\theoremstyle{definition}

\theoremstyle{remark}

\numberwithin{equation}{section}

\title{The truncation property and continuity for \\ the long-range contact process on $\mathbb{Z}^d$}

\author{Stein Andreas Bethuelsen
\thanks{University of Bergen, All\'egaten 41, 5020 BERGEN, Norway\\  Email: stein.bethuelsen@uib.no}
 \and Frank Namugera
\thanks{Mathematics Department, Makerere University, Kampala, Uganda\\
  Email: frank.namugera@mak.ac.ug}}

\begin{document}
\maketitle

\abstract{We consider a general class of contact processes on $\mathbb{Z}^d$ with potentially long-range interactions. 
By adapting  well established renormalization arguments to the long-range setting we extend by now classical results for finite-range processes to this more general setting.  
Particularly, we provide general conditions  on the decay of the interactions under which a supercritical process remains supercritical  after truncation of the interaction parameter at a sufficiently large distance. 
Further, for the family of parameters satisfying this latter truncation property, we conclude that the probability of the process to never recover is continuous.
\\

\noindent\footnotesize{{\textbf{AMS-MSC 2020}: 60K35; 82B43}

\smallskip 
\noindent\textbf{Key Words}: contact process, long-range, renormalization, truncation, continuity.
}


\section{Introduction and main results}\label{sec:Intro}

\paragraph{Introduction.}  
The ordinary, nearest neighbour, contact process is one of the classical interacting particle systems covered in the defining books \cite{LiggettIPS1985,LiggettSIS1999} by Liggett. On the $d$-dimensional integer lattice, it is the Markov process on the state space $\Omega \coloneqq \{0,1\}^{\bZ^d}$ where the state at a location $x\in \bZ^d$ transitions from $1$ to $0$ at rate $\delta \in (0,\infty)$, and, if in state $1$, transmits its state to a neighbour $y\sim x$ in state $0$ at rate $\lambda \in (0,\infty)$. 

Motivation for studying this dynamic process stems among others from the modelling of infection spread in a population, where one interprets the nodes of the network as individuals of the  population and these are healthy if in state $0$ and infected if in state $1$. Another interpretation is that nodes in state $1$ as occupied by an individual and nodes in state $0$ as vacant, providing a simplified model for the evolution of a population. It also has its origin stemming from high energy physics. Today it forms a fundamental building block, and reference model, for a wealth of agent based models, see e.g.\ \cite{Lanchier2024}.

Although the framework of interacting particle systems was rather early defined for general interactions, 
by the time of Liggett's first book \cite{LiggettIPS1985}, most results for the ordinary contact process were restricted to the one-dimensional lattice. First with the groundbreaking work of Bezuidenhout and Grimmett in \cite{bezuidenhout1990critical} the ordinary contact process on $\bZ^d$, $d\geq 2$, obtained a detailed  and deep mathematical analysis, which not much later was extended to a broader class of (finite range) processes  \cite{bezuidenhout1994critical}. Since then the study of the ordinary contact process has witnessed a flourishing development to cover more general, and realistic, modelling settings, as partly summarized in \cite{LiggettSIS1999} and in the recent books \cite{Lanchier2024,Valesin2024}.

\textbf{The long-range contact process.} 
This paper is devoted to the study of the so-called long-range contact process (LRCP) on $\bZ^d$, $d\geq 1$, where we allow for unbounded interactions.  
That is, as above, the state at a location $x\in \bZ^d$ transitions from $1$ to $0$ at rate $\delta$. However, we are provided with  a collection of non-negative numbers $\lambda=(\lambda_{x,y})_{x,y \in \bZ^d}$ so that, if $x$ is in state $1$, it transmits its state to any $y \in \bZ^d$ in state $0$ at rate $\lambda_{x,y}$. More formally, this is the interacting particle system $(\eta_t)_{t\geq0}$  on $\Omega$ specified by its pre-generator $L_{\lambda} \colon \mathcal{C}(\mathbb{R}) \mapsto \mathcal{C}(\mathbb{R})$ given by 
\begin{equation}\label{eq contact generator LR}
L_{\lambda} f(\omega) := \sum_{\substack{x \in V \\ \omega(x)=1}} \left(   \delta[f(\omega^{x \leftarrow 0})-f(\omega)]+ \sum_{y\neq x} \lambda_{x,y} [f(\omega^{y \leftarrow 1})-f(\omega)] \right),
\end{equation}
for $ \omega \in \{0,1\}^{\bZ^d}$. 
Here, for $z \in V$ and $i \in \{0,1\}$, we denote by $\omega^{z \leftarrow i}$ the configuration where $\omega^{z \leftarrow i}(z)=i$ and $\omega^{z \leftarrow i}(x)=\omega(x)$ for any $x\neq z$. 

For the LRCP to be well-defined we also need to impose some conditions on the parameter $\lambda$, which unless otherwise specified is assumed to satisfy:
\begin{enumerate}
    \item[] \namedlabel{item1:ConditionsAB}{a)} $\lambda_{x,y} = \lambda_{x+z,y+z}$ for all $x,y,z \in \bZ^d$ (translation invariant) 
    \item[] \namedlabel{item2:ConditionsAB}{b)} $ \sum_{y \neq o} {\lambda_{o,y}} < \infty$ (summability)
    \end{enumerate}
    Further, for any $y\in \bZ^d$ we set $\lambda_{y,y}=0$ as this rate has no influence on the dynamics.  
Note that, from these conditions, it follows that  
\begin{equation}\label{eq:lambda_infty}
\lambda_{\infty} \coloneqq \sup_{x \in \bZ^d}    \sum_{y \neq x} \lambda_{x,y} < \infty
\end{equation}
and thus,  by \cite[Thm I.3.9]{LiggettIPS1985}, the corresponding stochastic process is well-defined. 

We denote by $\Lambda$ the space of infection parameters $\lambda= (\lambda_{x,y})_{x,y \in \bZ^d}$ for which $\lambda_{x,y}>0$ for some $x,y \in \bZ^d$ and satisfying Conditions \eqref{item1:ConditionsAB} and \eqref{item2:ConditionsAB}. Clearly the ordinary contact process where $\lambda_{x,y}= \beta \ind_{\{\|x-y\|=1\}}$ for some $\beta\in(0,\infty)$ falls within this setting. Here $\|\cdot \|$ denotes the $\ell_{1}$-norm, and for concreteness and convenience we stick with this norm also in the following. Further, the  extension to having  translation invariant and finite range interactions are also included. The latter means that there is some range $R\in \bN$ so that $\lambda_{x,y}=0$ whenever  $\| y-x \|\geq R$.
A natural example of an infinite range model is to consider $\lambda\in \Lambda$ for which 
\begin{equation}\label{eq:examplesInteraction}
\lambda_{o,y} = \beta  \|y\|^{-\alpha} \quad \text{ for some }\alpha>d \text{ and }\beta \in (0,\infty).
\end{equation}

\textbf{Parameter regimes.} We write  $\bP_{\lambda,\delta}$ for the law of the LRCP with parameters $\lambda \in \Lambda$ and $\delta \in (0,\infty)$,  and let $(\eta_t^x)$ denote this process initiated with only $x\in \bZ^d$ infected, i.e.\ in state $1$. 
Further, we denote the probability of the (infection) process to never die out by 
\begin{equation}
\phi(\lambda,\delta) \coloneqq \bP_{\lambda,\delta}( \eta_t^o \neq \underline{0} \text{ for any } t>0),
\end{equation}
where $\underline{0} \in \Omega$ denotes the all 0's configuration and $o \in \bZ^d$ denotes the origin. By standard arguments that we discuss more carefully in Section \ref{sec:preliminiaries} when introducing the graphical construction of the LRCP,  the function $\phi(\cdot,\cdot)$ is non-increasing in its second argument (and non-decreasing in its first argument). 
Therefore, for any $\lambda \in \Lambda$, we may consider the \emph{critical value}
\begin{equation}
\delta_{c} =\delta_{c}(\lambda)\coloneqq \inf\{\delta>0: \phi(\lambda,\delta)=0\} = \sup\{\delta>0: \phi(\lambda,\delta)=0\}. 
\end{equation}
Using this, as e.g.\ in \cite{Swart2018}, we say that the LRCP with parameters $\lambda$ and $\delta>0$ is \emph{critical} if $\delta=\delta_c(\lambda)$, that it is \emph{subcritical} if $\delta>\delta_c(\lambda)$ and \emph{supercritical} if  $\delta<\delta_c(\lambda)$. 

Note that, since the process with parameters $\lambda\in \Lambda$ and $\delta>0$ is equivalent to the process with parameters  $\delta^{-1}\lambda \in \Lambda$ and $1$ upon a scaling of time, these definitions also agree with the possibly more usual ones in terms of the infection parameter $\lambda$ as e.g.\ in \cite{AizenmanJung2007}. Moreover, it is not difficult to see that $\delta_\mathsf{c}(\lambda) \in (0,\infty)$. Indeed, since $\lambda$ is assumed to be summable, the finiteness of $\delta_\mathsf{c}(\lambda)$ follows from a simple comparison argument with a branching process. Further, since the nearest neighbour contact process on $\bN$ may with positive probability never die out at sufficiently low recovery rates, it immediately follows that $\delta_\mathsf{c}(\lambda)>0$.

There are equivalent descriptions of the above  defined parameter regimes  in terms of the expected cluster size for the process started from only the origin infected, or the so-called upper invariant measure, which we detail next. More precisely,  
with $\lambda \in \Lambda$ and $\delta>0$, the \emph{susceptibility} is defined as
\begin{equation}
 \chi(\lambda,\delta)\coloneqq \int_0^{\infty} \bE_{\lambda,\delta}\left( \left| \{x \in \bZ^d \colon \eta_t^o(x)=1\} \right| \right) dt. 
\end{equation} 
Further, for $i=0$ or $1$, let $\delta_{\underline{i}}$ denote the probability distribution on $\Omega$ that concentrates on the all $i$'s configuration $\underline{i} \in \Omega$. 
Clearly, the LRCP is stationary with respect to the measure $\delta_{\underline{0}}$. 
Again by standard monotonicity arguments that we briefly describe in Section \ref{sec:preliminiaries}, it follows that the LRCP started from $\delta_{\underline{1}}$ converges weakly 
towards the upper invariant measure, denoted here by $\bar{\nu}_{\lambda,\delta}$, and this measure is also a stationary distribution for $(\eta_t)$. 

The following statement summarizes classical relations between $\phi(\lambda,\delta)$,  $\chi(\lambda,\delta)$ and $\bar{\nu}_{\lambda,\delta}$ that in turn give further characterizations of the parameter regimes. 

\begin{proposition}[Proposition 1.1 and Theorem 1.2 in \cite{AizenmanJung2007}, and Theorem 1 in \cite{Swart2018}] \label{prop:SubCritExpDecay}
    Consider the LRCP  on $\bZ^d$, $d\geq 1$, with $\lambda \in \Lambda$ and $\delta>0$. 
\begin{enumerate}
    \item[i)] The process is subcritical if and only if $\chi(\lambda,\delta)<\infty$ and there exists some $\tau<0$ such that
    \begin{equation}
     \inf_{t >0 } \frac{1}{t} \log  \bE_{\lambda,\delta} \left[  \left| \left\{ x\in \bZ^d \colon \eta_t^o(x)=1\right\} \right| \right] = \tau.
    \end{equation}
     Moreover, if the process is subcritical, then $\bar{\nu}_{\lambda,\delta} = \delta_{\underline{0}}$.
    \item[ii)] If the process is critical, then $\chi(\lambda,\delta)=\infty$.
\item[iii)]   If the process is supercritical, then  $\chi(\lambda,\delta)=\infty$ and $\bar{\nu}_{\lambda,\delta} \neq \delta_{\underline{0}}$.
\end{enumerate}
    \end{proposition}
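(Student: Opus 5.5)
This statement is quoted from \cite{AizenmanJung2007} and \cite{Swart2018}, so the plan is to reconstruct their arguments in the present notation.

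\emph{Subadditivity.} The graphical construction of Section \ref{sec:preliminiaries} gives the expected cluster size $m(t)\coloneqq \bE_{\lambda,\delta}|\{x:\eta_t^o(x)=1\}|$. By attractiveness, additivity and translation invariance (condition \ref{item1:ConditionsAB}), it satisfies the submultiplicativity
\begin{equation}
m(t+s)\le m(t)\,m(s).
\end{equation}
Fekete's lemma then gives $\tau=\lim_t \frac1t\log m(t)=\inf_t \frac1t\log m(t)$. Since $m(t)\ge e^{-\delta t}$, this limit is finite, and $m(t)\ge e^{\tau t}$ for all $t$.

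\emph{Part i), easy direction.} If $\tau<0$, then $m(t)\le e^{\tau t}$, so $\chi<\infty$. Moreover $\bP(\eta^o_t\ne\underline 0)\le m(t)\to0$, so $\phi=0$.

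\emph{Strict subcriticality.} Suppose $\delta>\delta_c$. I would prove $\chi<\infty$ via the Aizenman--Barsky/Menshikov-type differential inequalities, adapted by Aizenman--Jung to long-range rates. Introduce a ghost field of intensity $h$ and the magnetization $M(\lambda,\delta,h)$. One derives partial differential inequalities of the form
\begin{equation}
M\le h\partial_h M+ c\,M^2+\ldots \quad\text{in }\delta\text{ and }h.
\end{equation}
Their only input on $\lambda$ is summability, through $\lambda_\infty<\infty$ from \eqref{eq:lambda_infty}. Integrating these inequalities shows that $\chi<\infty$ on all of $\{\delta>\delta_c\}$.

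Next, $\chi<\infty$ must be upgraded to $\tau<0$. The approach is to show that $\chi<\infty$ forces $m(T)<1$ for some $T$: since $\int_0^\infty m(t)\,dt<\infty$, there are arbitrarily large $t$ with $m(t)<1$. Submultiplicativity then yields $\tau\le \frac1T\log m(T)<0$.

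\emph{Converse, and parts ii) and iii).} The sharpness statement ``$\phi=0\Rightarrow\chi<\infty$ except at $\delta_c$'' is the hard part, and I expect it to be the main obstacle. The differential inequality must be handled carefully so that no finite-range estimates creep in; only the bound on the total infection rate out of a site may be used. Given sharpness, ii) and iii) follow by continuity arguments. At $\delta_c$, lower semicontinuity of $\chi$ in $\delta$ (by monotone approximation from finite time horizons) together with $\chi\to\infty$ as $\delta\downarrow\delta_c$, which comes from the inequality $\chi\ge c/(\delta-\delta_c)$, gives $\chi=\infty$. In the supercritical case, $m(t)\ge \bP(\eta^o_t\ne\underline0)\ge\phi>0$, so $\chi=\infty$.

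\emph{Upper invariant measure.} I use self-duality of the contact process: the dual process has rates $\lambda_{y,x}$, which again lie in $\Lambda$. This gives
\begin{equation}
\bar\nu_{\lambda,\delta}(\eta(o)=1)=\hat\phi,
\end{equation}
where $\hat\phi$ is the survival probability of the reversed process. Comparing $\hat\phi$ with $\phi$ is the role of Swart's result \cite{Swart2018}: survival is symmetric, i.e.\ $\delta_c(\lambda)=\delta_c(\hat\lambda)$. With this, iii) and the last claim of i) follow.
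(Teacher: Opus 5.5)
The paper gives no proof of this proposition; it imports it from Aizenman--Jung and Swart. Your black-box appeal to Aizenman--Barsky-type inequalities for the implication $\delta>\delta_c\Rightarrow\chi<\infty$ is therefore on the same footing as the paper. The subadditivity step, $\chi<\infty\Rightarrow\tau<0$, and $\chi=\infty$ in the supercritical case are fine. The real problem is your treatment of the critical point $\delta=\delta_c$, which both ii) and the converse direction of i) depend on.

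\textbf{The critical point.} Your ``easy direction'' only yields $\phi(\lambda,\delta)=0$, i.e.\ $\delta\ge\delta_c$. Subcriticality means $\delta>\delta_c$, so the case $\delta=\delta_c$ stays open, and the converse of i) rests entirely on ii). Your argument for ii) does not work as written, for two reasons.
\begin{itemize}
\item Lower semicontinuity gives $\chi(\delta_c)\le\liminf_{\delta\to\delta_c}\chi(\delta)$. That is the wrong direction for concluding $\chi(\delta_c)=\infty$; what you need there is monotonicity of $\chi$ in $\delta$.
\item Everything then hinges on the mean-field bound $\chi(\delta)\ge c/(\delta-\delta_c)$, which you do not derive. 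If you obtain it by integrating $-\partial_\delta\chi\le C\chi^2$ for the infinite-time susceptibility, you must already know that $\chi(\delta')\uparrow\infty$ as $\delta'\downarrow\delta_c$. That is exactly what you are trying to prove, so the argument is circular. It can be rescued by working with $\chi_T=\int_0^T m(t)\,dt$ and integrating across $\delta_c$, but that step is not in your sketch.
\end{itemize}

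\textbf{A fix.} An elementary argument closes both holes without sharpness or mean-field bounds. For fixed $T$, the map $\delta\mapsto m_\delta(T)$ is continuous. This follows from the natural monotone coupling, using that $|A^o_t|$ is dominated by a branching process of rate $\lambda_\infty$; the paper uses this same fixed-$T$ continuity at the end of the proof of Theorem~\ref{thm:resilient}. Now suppose $\tau<0$, which holds in particular if $\chi(\lambda,\delta)<\infty$.
\begin{itemize}
\item Since $\tau$ is an infimum, $m_\delta(T)<1$ for some $T$.
\item By continuity, $m_{\delta'}(T)<1$ for some $\delta'<\delta$.
\item Submultiplicativity then gives $m_{\delta'}(t)\to0$, hence $\phi(\lambda,\delta')\le\lim_t m_{\delta'}(t)=0$, i.e.\ $\delta_c\le\delta'<\delta$.
\end{itemize}
This proves ii) and the converse of i) at once.

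\textbf{Smaller points.}
\begin{itemize}
\item You write $m(t)\le e^{\tau t}$. Since $\tau$ is the infimum, the correct inequality is $m(t)\ge e^{\tau t}$, as you yourself noted a line earlier. The conclusion $\chi<\infty$ must instead come from $\tau$ being the limit, which gives $m(t)\le e^{\tau t/2}$ for large $t$.
\item You do not need Swart to compare $\hat\phi$ with $\phi$. On $\bZ^d$ the reflection $x\mapsto -x$ maps the LRCP with rates $\lambda$ onto the one with reversed rates $\hat\lambda_{o,y}=\lambda_{o,-y}$. Hence $\bar\nu_{\lambda,\delta}(\omega(o)=1)=\phi(\hat\lambda,\delta)=\phi(\lambda,\delta)$ directly.
\item In the subcritical case it is simpler still: $\bar\nu_{\lambda,\delta}(\omega(o)=1)\le\sum_y\bP_{\lambda,\delta}\big((y,0)\rightarrow(o,t)\big)=m(t)\to0$.
\end{itemize}
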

    
  Note that the  statements of   \cite{AizenmanJung2007} and \cite{Swart2018} referred to above were proven in a more general setting of the LRCP on transitive graphs.

\paragraph{Main results.} Previously, slight variants of the above defined LRCP have been studied by several,  e.g.\ in  \cite{Bramson1981,Can2015,Gomes2022,Jahnel2025,SeilerSturm2023}. 
The scope of these papers, however, is mainly to establish qualitative results, i.e.\ providing parameter regimes for which $\delta_c(\lambda) \in (0,\infty)$ or $\delta_c(\lambda) \notin (0,\infty)$, respectively. 
One motivation for our work  has been to develop a machinery that can be used to provide more quantitative results through the use renormalization techniques for such (and other) long-range contact processes - see also the discussion in Section \ref{sec:applications}.

The main results of this paper give a further characterization of the parameter regimes defined above. 
For this, following the convention of  \cite{baumler2023continuity}, we next introduce the  concept of \emph{resilience} with respect to truncation. For this, given $\lambda \in \Lambda$  and $k \in \bN$, we let \begin{equation}\label{eq:lambdaKcK}
     \leftidx{_{k}}\lambda_{x,y} \coloneqq \lambda_{x,y} \ind_{\{\|x-y\| \leq k\}}, \quad x,y \in \bZ^d.
\end{equation}
 The infection parameter $(\leftidx{_{k}}\lambda)$ is thus \emph{the truncation of $\lambda$ at level $k$}. 
An infection parameter $\lambda \in \Lambda$  is said to be resilient if $\lim_{k \rightarrow \infty}     \phi(\leftidx{_{k}}\lambda,\delta)>0$ whenever 
    $\phi(\lambda,\delta)>0$. 
 
Our first result states that, for a resilient infection parameter, the LRCP dies out at criticality. This generalize the results of \cite{bezuidenhout1990critical,bezuidenhout1994critical} to the long-range setting.

\begin{theorem}[Extinction at criticality]\label{thm:resilient2}
If $\lambda \in \Lambda$ is resilient, then the  LRCP with parameters $\lambda$ and $\delta>0$ is critical if and only if $\chi(\lambda,\delta)=\infty$ and $\phi(\lambda,\delta)=0$. \end{theorem}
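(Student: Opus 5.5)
The plan is to prove the two directions separately. The forward direction carries all the content, and it reduces to showing that $\phi(\lambda,\delta_c(\lambda))=0$ whenever $\lambda$ is resilient.

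\emph{Reverse direction.} Suppose $\chi(\lambda,\delta)=\infty$ and $\phi(\lambda,\delta)=0$. By Proposition~\ref{prop:SubCritExpDecay}\,i), $\chi(\lambda,\delta)=\infty$ excludes subcriticality. If the process were supercritical, i.e.\ $\delta<\delta_c(\lambda)$, then by monotonicity and the definition of $\delta_c$ we would have $\phi(\lambda,\delta)>0$. This is a contradiction, so $\delta=\delta_c(\lambda)$.

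\emph{Forward direction.} If $\delta=\delta_c(\lambda)$, then Proposition~\ref{prop:SubCritExpDecay}\,ii) gives $\chi(\lambda,\delta)=\infty$. It remains to show $\phi(\lambda,\delta_c)=0$. Suppose instead that $\phi(\lambda,\delta_c)>0$, and argue towards the contradiction $\phi(\lambda,\delta')>0$ for some $\delta'>\delta_c$. The argument proceeds in three steps.
\begin{enumerate}
\item By resilience, there is a $k$ with $\phi({}_k\lambda,\delta_c)>0$, so the truncated process is a survivng finite-range, translation-invariant contact process.
\item For finite-range processes, the Bezuidenhout--Grimmett block construction \cite{bezuidenhout1990critical,bezuidenhout1994critical} applies. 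Survival is then equivalent to a finite space-time box event $E$ having probability larger than $1-\varepsilon$, where $\varepsilon$ is the threshold that guarantees domination of supercritical oriented percolation. The event $E$ depends on the graphical construction only inside a bounded space-time region $B$; with range $k$, only arrows of length at most $k$ inside $B$ matter.
\item The probability of $E$ is continuous in the finitely many rates it involves: the recovery rate $\delta$ and the rates ${}_k\lambda_{o,y}$ for $\|y\|\le k$. Hence $\bP_{{}_k\lambda,\delta'}(E)>1-\varepsilon$ for some $\delta'>\delta_c$. This yields $\phi({}_k\lambda,\delta')>0$, and by monotonicity in the infection parameter, $\phi(\lambda,\delta')\ge\phi({}_k\lambda,\delta')>0$. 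That contradicts the definition of $\delta_c$.
\end{enumerate}

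\emph{Main obstacle.} The delicate point is step 2: importing the finite-range theory for the truncated parameter ${}_k\lambda$. If ${}_k\lambda$ is not nearest-neighbour, I would invoke the general finite-range framework of \cite{bezuidenhout1994critical}, checking translation invariance and attractiveness. One subtlety is that ${}_k\lambda$ might be supported on a sublattice, or be non-symmetric. I would handle this either by restricting to the subgroup generated by its support, or by using the oriented version of the block argument, which only needs survival together with finitely many local events.

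If one prefers to avoid outsourcing this step, the alternative is to reprove the finite-box criterion directly for ${}_k\lambda$, following Bezuidenhout--Grimmett: survival gives many infected sites on the top and sides of a large box, and an FKG-based argument seeds translated copies. The renormalization machinery developed later in the paper presumably handles exactly this; at the level of this plan I would cite it as the finite-range block criterion.
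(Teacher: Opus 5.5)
Your proposal is correct and follows essentially the same route as the paper: resilience gives a surviving finite-range truncation ${}_k\lambda$, and the finite-range block criterion supplies a finite space-time condition for it. Continuity of the bounded-box event in the recovery rate then gives survival at some $\delta'>\delta_c$, and monotonicity in the infection parameter carries this back to $\lambda$, a contradiction; your reverse direction, which uses the definition of $\delta_c$ directly to exclude supercriticality, is if anything slightly more direct than the paper's appeal to Proposition~\ref{prop:SubCritExpDecay}\,iii).
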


Note that, as a consequence of Theorem \ref{thm:resilient2} and Proposition \ref{prop:SubCritExpDecay}, for any $\lambda \in \Lambda$ resilient,  the LRCP with parameters  $\lambda$ and $\delta$ is supercritical if and only if $\phi(\lambda,\delta)>0$. It also follows that, if the process is critical, then $\bar{\nu}_{\lambda,\delta} = \delta_{\underline{0}}$.

Our next theorem is inspired by  \cite[Theorem 1.4 and Corollary 1.5]{baumler2023continuity}, who recently proved a similar statement for long-range bond percolation on $\bZ^d$. 

\begin{theorem}[Continuity]\label{thm:resilient} 
Let $\lambda\in \Lambda$ be resilient. Then, for any sequence  $\delta^{(n)}\rightarrow \delta  \in [0,\infty)$ and any sequence $(\lambda^{(n)})$ of elements in $\Lambda$ satisfying 
\begin{equation}\label{eq:condTheoremResilient} 
\lim_{n\rightarrow \infty} \sum_{y \in \bZ^d} \sup_{m\geq n}| \lambda^{(m)}_{o,y} - \lambda_{o,y}| =0,
\end{equation}
it holds that
    $\lim_{n \rightarrow \infty} \phi(\lambda^{(n)},\delta^{(n)})=\phi(\lambda,\delta)$. 
 In particular,  $\delta_c(\lambda^{(n)}) \rightarrow \delta_c(\lambda)$ as $n \rightarrow \infty$. 
\end{theorem}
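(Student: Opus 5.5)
The proof splits into upper semicontinuity and lower semicontinuity of $\phi$ along the sequence, following the scheme of \cite{baumler2023continuity}.

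\textbf{Upper semicontinuity.} Write $\phi(\lambda,\delta)=\lim_{t\to\infty}\phi_t(\lambda,\delta)$, where $\phi_t(\lambda,\delta)\coloneqq\bP_{\lambda,\delta}(\eta^o_t\neq\underline{0})$ is non-increasing in $t$. For fixed $t$ I will show that $\phi_t$ is continuous along $(\lambda^{(n)},\delta^{(n)})$. The tool is the graphical construction: Poisson recovery marks at rate $\delta$ and infection arrows $x\to y$ at rate $\lambda_{x,y}$. I will couple the systems for $(\lambda^{(n)},\delta^{(n)})$ and $(\lambda,\delta)$ by superposing Poisson processes at rates $\min(\lambda^{(n)}_{x,y},\lambda_{x,y})$ and $|\lambda^{(n)}_{x,y}-\lambda_{x,y}|$, and similarly for the recovery rates. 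On $[0,t]$ the infection started from $o$ is dominated by a branching random walk with finite mean size $e^{\lambda_\infty' t}$, where $\lambda_\infty'$ uniformly bounds the total rates; this bound is uniform in $n$ by \eqref{eq:condTheoremResilient}. By \eqref{eq:condTheoremResilient} and $\delta^{(n)}\to\delta$, the expected number of discrepant arrows or marks used by this dominating cloud up to time $t$ is therefore at most $C t e^{Ct}\bigl(\sum_y|\lambda^{(n)}_{o,y}-\lambda_{o,y}|+|\delta^{(n)}-\delta|\bigr)\to 0$. Hence $\phi_t(\lambda^{(n)},\delta^{(n)})\to\phi_t(\lambda,\delta)$, and so
\[
\limsup_n\phi(\lambda^{(n)},\delta^{(n)})\le\inf_t\phi_t(\lambda,\delta)=\phi(\lambda,\delta).
\]
This settles the case $\phi(\lambda,\delta)=0$, and in particular the case $\delta\ge\delta_c(\lambda)$, since Theorem \ref{thm:resilient2} gives $\phi=0$ at criticality. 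The case $\delta=0$ is degenerate, with $\phi=1$, and is handled by a direct argument.

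\textbf{Lower semicontinuity when $\phi(\lambda,\delta)>0$.} By Theorem \ref{thm:resilient2} the process is supercritical, so $\delta<\delta_c(\lambda)$. Choose $\delta'\in(\delta,\delta_c(\lambda))$; then $\phi(\lambda,\delta')>0$. Resilience gives some $k$ with $\phi({}_k\lambda,\delta')>0$. I then invoke the renormalization machinery developed later in the paper for finite-range processes, in the Bezuidenhout--Grimmett form: survival of a finite-range process is witnessed by a finite space-time block event, and this event dominates supercritical oriented percolation. I need two consequences of it:
\begin{itemize}
\item[(a)] survival is an open condition in the finitely many parameters $({}_k\lambda_{o,y})_{\|y\|\le k}$ and $\delta$;
\item[(b)] the survival probability is approximated by finite-box, finite-time events.
\end{itemize}

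Now $\lambda^{(n)}\ge {}_k\lambda^{(n)}$, and ${}_k\lambda^{(n)}\to{}_k\lambda$ coordinatewise by \eqref{eq:condTheoremResilient}, while $\delta^{(n)}<\delta'$ eventually. Monotonicity together with (a) then gives uniform supercriticality for large $n$. To obtain the exact limit $\phi(\lambda,\delta)$, rather than mere positivity, I will use the standard decomposition
\[
\phi(\lambda^{(n)},\delta^{(n)})\ge \bP_{\lambda^{(n)},\delta^{(n)}}\bigl(|\eta^o_t|\ge N\bigr)\,\inf_{|A|\ge N}\bP_{{}_k\lambda^{(n)},\delta^{(n)}}\bigl(\eta^A \text{ survives}\bigr).
\]
The first factor converges to $\bP_{\lambda,\delta}(|\eta^o_t|\ge N)$ by the fixed-time coupling above. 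The infimum tends to $1$ as $N\to\infty$, uniformly in large $n$, by the renormalization. Finally, $\bP_{\lambda,\delta}(|\eta^o_t|\ge N)\to\phi(\lambda,\delta)$ as $t\to\infty$ by a standard argument using the same renormalization: on survival, the infected set grows. Letting $n\to\infty$, then $t\to\infty$, then $N\to\infty$ yields the matching lower bound.

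The claim $\delta_c(\lambda^{(n)})\to\delta_c(\lambda)$ follows from the same two directions. For $\delta<\delta_c(\lambda)$ we get uniform survival, so $\liminf_n\delta_c(\lambda^{(n)})\ge\delta_c(\lambda)$. For the other direction, take $\delta>\delta_c(\lambda)$. Proposition \ref{prop:SubCritExpDecay} i) gives a finite $t$ with $\bE_{\lambda,\delta}|\eta^o_t|<1$. By the fixed-time coupling, together with uniform integrability from the branching bound, the same strict inequality holds for $\lambda^{(n)}$ at large $n$. By submultiplicativity this implies extinction, so $\limsup_n\delta_c(\lambda^{(n)})\le\delta$.

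\textbf{Main obstacle.} The hard step is (a) together with the uniform-in-$n$ statement $\inf_{|A|\ge N}\bP(\eta^A\text{ survives})\to1$. This is exactly where the Bezuidenhout--Grimmett finite-box criterion must be available for finite-range but non-nearest-neighbour $\lambda$, as in \cite{bezuidenhout1994critical}. I will rely on the paper's renormalization section for it, and verify that the block events depend continuously on the finitely many rates.
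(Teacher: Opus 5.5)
Your argument is correct, but it differs from the paper's in all three parts.

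\textbf{Upper bound.} The paper sandwiches $(\lambda^{(n)},\delta^{(n)})$ between the monotone envelopes $(\lambda^{(n,-)},\delta^{(n,+)})$ and $(\lambda^{(n,+)},\delta^{(n,-)})$, and then uses the monotone right-continuity of Proposition \ref{prop:rc}. You instead combine $\phi\le\phi_t$ with fixed-time continuity along the sequence itself, via the branching-cloud coupling.

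\textbf{Lower bound.} The paper proves a uniform exponential tail $\bP(t\le\tau^o<\infty)\le Ce^{-ct}$ by a restart argument from the truncated process (Proposition \ref{prop:tue}), and then splits $\phi$ at a large time $T$. You instead use the Markov property at time $t$ together with a uniform large-set survival bound $\inf_{|A|\ge N}\bP_{{}_k\lambda^{(n)},\delta^{(n)}}(\eta^A \text{ survives})\to 1$.

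\textbf{Convergence of $\delta_c$.} The paper invokes \cite[Theorem 1]{Swart2018}. Your argument via $\bE_{\lambda,\delta}|\eta^o_t|<1$ and submultiplicativity is more elementary. Your uniform-integrability remark also justifies the continuity of the fixed-time expectation, which the paper only asserts.

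\textbf{What your route buys.} You only use $\sum_y|\lambda^{(n)}_{o,y}-\lambda_{o,y}|\to 0$. So your proof gives the theorem under a weaker hypothesis than \eqref{eq:condTheoremResilient}, which the paper needs in order to make the envelope $\lambda^{(n,+)}$ summable. Choosing $\delta'\in(\delta,\delta_c(\lambda))$ and using that survival is an increasing event reduces all uniformity to one fixed recovery rate and the finitely many rates $({}_k\lambda^{(n)}_{o,y})_{\|y\|\le k}$.

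The paper's route tacitly needs the same uniformity of the block estimates over a parameter neighbourhood. Proposition \ref{prop:tue} is stated for $\hat\lambda\ge\lambda$ at fixed $\delta$, yet it is applied to $(\lambda^{(n,-)},\delta^{(n,+)})$. So the obstacle you flag is the real crux of both proofs. The price of your version is that you need survival bounds from arbitrary, possibly spread-out, large initial sets, with constants uniform in $n$, rather than a single-site tail bound. This is an analogue of $\bP(\tau^A<\infty)\le Ce^{-\epsilon|A|}$, obtained from the oriented-percolation comparison.

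\textbf{Two minor points.} First, $\bP_{\lambda,\delta}(|\eta^o_t|\ge N)\to\phi(\lambda,\delta)$ needs no renormalization; it is exactly the general martingale argument of Lemma \ref{lem:prophelp}. Second, $\delta=0$ is not a separate case for you. Since $\delta_c(\lambda)>0$, your lower-bound argument runs verbatim with any $\delta'\in(0,\delta_c(\lambda))$.
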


Clearly, any $\lambda \in \Lambda$ of finite range is resilient. Our last result of this section shows that rather general families of long-range infection parameters are resilient. 
For this, we say that $\lambda \in \Lambda$ is \emph{symmetric} if for all $y =(y_1,\dots,y_d)\in \bZ^d$ it holds that 
     $\lambda_{o,y} = \lambda_{o,\tilde{y}}$    
       for any $\tilde{y} \in \bZ^d$ obtained by either changing the signs of any of the $d$ components of $y$ or after permuting them.  

\begin{theorem}[Truncation]\label{thm:truncation} $ $
\begin{enumerate}
\item[a)] The symmetric case: 
All elements of $\lambda \in \Lambda$ that are symmetric and satisfy $\lambda_{o,y} \simeq \|y\|^{-\alpha}$ for some $\alpha>d$  are resilient. 
\item[b)] The non-symmetric case:
All elements of $\lambda \in \Lambda$ that satisfy $\lambda_{o,y} = \mathcal{O}(\|y\|^{-\alpha})$ for some $\alpha>2d+1$  are resilient.\\ 
\end{enumerate}
\end{theorem}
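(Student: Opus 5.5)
We will prove both parts through the Bezuidenhout--Grimmett finite-box criterion, adapted to long-range interactions. Fix $\lambda$ and $\delta$ with $\phi(\lambda,\delta)>0$. The goal is a local event for the process restricted to the space-time box $B_L\times[0,T]$, with $B_L=[-L,L]^d$. Starting from a suitably large infected set $A\subset B_n$, this event should produce, with probability at least $1-\varepsilon$, infected translates of $B_n$ near the top of the box in two prescribed (neighbouring) directions. In the nearest-neighbour case, comparison with a supercritical oriented site percolation on $\bZ^2$ (or $\bZ\times\bZ_+$) then gives survival.

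The key observation is that this construction only uses infection arrows between pairs $x,y\in B_{L+n}$ during $[0,2T]$. We would therefore use only arrows of length at most $k=2(L+n)d$, so the same event is realised for $\leftidx{_{k}}\lambda$ with exactly the same probability. Positive survival for the truncated process then follows from the block comparison, and $\lim_k\phi(\leftidx{_{k}}\lambda,\delta)>0$ by monotonicity in $k$. Hence the whole task reduces to establishing the finite-box criterion for the untruncated long-range process.

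We would derive this criterion from $\phi(\lambda,\delta)>0$ in the classical steps.
\begin{enumerate}
\item With many initially infected sites, the process survives with probability close to $1$.
\item On survival, the number of infected sites in $B_L$ at time $T$, and of infected space-time points on the lateral boundary of $B_L\times[0,T]$, tends to infinity.
\item By symmetry of the box and the FKG inequality (square-root trick), one face, for instance the one in direction $e_1$, carries many infected points with probability $1-\varepsilon^{1/(2d)}$.
\item Using finite energy, a single such point is converted into a full infected translate of $B_n$.
\end{enumerate}
The long-range issue appears in steps 2 and 4. The process restricted to the box is no longer the same as the full process, because arrows entering from outside, or leaving and returning, are lost. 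We would control this by showing that the process restricted to $B_L\times[0,T]$ survives inside the box with probability close to that of the full process. The expected number of arrows from a fixed infected site to outside $B_L$ during a unit time is $\sum_{\|y\|>L}\lambda_{o,y}\to0$, but one must sum over all infected sites near the boundary. Handling these boundary arrows is where the two hypotheses enter.

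In the symmetric case a), we would exploit the reflection and permutation symmetry of $\lambda$ to make the reflection/square-root argument of step 3 apply to all faces at once. We would then use the decay $\|y\|^{-\alpha}$ with $\alpha>d$ to choose $L$ so that long jumps exiting the box can be replaced by jumps landing within a neighbouring block. That is, we would build the oriented percolation with overlapping blocks, and a long jump of length of order $L$ is itself good for the renormalised model.

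In the non-symmetric case b), symmetry is unavailable, so drift may favour one direction and step 3 breaks down. Instead we would bound the total contribution of excursions outside the box crudely. The infected set inside $B_L\times[0,T]$ has expected size $\mathcal{O}(L^{d}T)$, and the rate of jumps of length greater than $L$ from one site is $\mathcal{O}(L^{d-\alpha})$. Taking $T$ of order $L$ (linear growth), the expected number of lost arrows is $\mathcal{O}(L^{2d+1-\alpha})\to0$ exactly when $\alpha>2d+1$. With this choice, the restricted and full processes agree with high probability, and the one-sided finite-range argument, which selects a favoured direction through a space-time cone, goes through.

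The main obstacle is proving that the local event has probability close to $1$ uniformly as $L,T\to\infty$ despite long jumps. This is step 2 combined with the estimate on lost arrows. For b) the expected-value bound above seems sufficient. For a) the weaker decay requires a genuinely multiscale treatment of long jumps, and we would first try to absorb them into the block structure using symmetry.
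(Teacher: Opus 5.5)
\textbf{Verdict: genuine gap.} Your outer reduction is the right frame. A block event that is measurable with respect to arrows of length $O(L)$ is realised identically for $\leftidx{_{k}}\lambda$, and the oriented-percolation comparison then gives $\phi(\leftidx{_{k}}\lambda,\delta)>0$. The problem is how you produce the block event.

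\textbf{Where it fails.} Your step 2 is the conclusion of the classical dichotomy: few infected points on the top and on the lateral boundary give extinction with uniformly positive conditional probability. That dichotomy breaks for long-range $\lambda$. Interior infected sites send arrows directly out of $B_L$, so killing the few boundary points does not confine the process. Hence you cannot conclude that many lateral-boundary points appear.

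Your proposed repair points the wrong way. You want the lost arrows to be negligible so that the restricted and full processes agree. But the side half of the block event, an analogue of (C2), needs exits from the box to be plentiful; if exits are rare, it fails. Specific problems:
\begin{itemize}
\item In (b), your count $L^{d+1}\cdot L^{d-\alpha}$ only covers jumps longer than $L$. Short jumps from sites near the boundary of $B_L$ also leave the box, and they are abundant once a supercritical process reaches it. So ``restricted and full agree with high probability'' is false.
\item If you instead mean agreement with the process truncated at range $L$, then applying the finite-range argument to that process is circular. Its box and time scales are not controlled by its range.
\item In (a) you leave the argument open (``genuinely multiscale'').
\item In a long-range block the infected translates appear at distance of order $L$ outside $B_{L+n}$. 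So the truncation level is a multiple of $L$, not $2(L+n)d$.
\end{itemize}

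\textbf{The missing ingredient.} The paper replaces the lateral-boundary count by the conditional mean number of arrows leaving the infected region of the box,
\[
E_{L,T}^{A}=\sum_{x\in B_L}\sum_{y\notin B_L}\lambda_{x,y}\int_0^T\ind_{\{(x,s)\in I_{L,T}^A\}}\,ds .
\]
Given $\cF_{L,T}$, the number of escaping arrows is Poisson with this mean. So on $\{E_{L,T}^A+|\leftidx{_{L}}A_T^A|\le k\}$ the full process dies with conditional probability bounded below by a positive constant depending only on $k$ (and $\delta,\lambda_\infty$). Choose $(L_j,T_j)$ so that $\bP_{\lambda,\delta}(|\leftidx{_{L_j}}A_{T_j}^A|>N)=1-\epsilon$ exactly. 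Then the dichotomy forces $E_{L,T}^A$ to be large with high probability for the same $(L,T)$.

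\textbf{Localising where the arrows land.} You then need a single-scale statement about where these arrows land.
\begin{itemize}
\item In (a), $\lambda_{o,y}\simeq\|y\|^{-\alpha}$ with $\alpha>d$ gives $\sum_{\|y\|>rL}\lambda_{o,y}\le\xi\sum_{\|y\|>L}\lambda_{o,y}$ for some $r>1$ and $\xi<1$. Hence $E_{L,T}^A\le(1-\xi)^{-1}E_{L,T,2rL}^A$, so a fixed fraction of the escape intensity lands in the annulus $B_{(1+2r)L}\setminus B_L$. Poisson splitting and Hoeffding's inequality turn this into many well-separated actual landing points. Your steps 3--4 (the square-root trick via symmetry, then finite energy) then apply with this annulus in place of a face. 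No multiscale treatment is needed.
\item In (b), $\alpha>2d+1$ enters through at-most-linear growth, which comes from the Chatterjee--Dey first-passage percolation bounds. This makes it unlikely that escape intensity lands far outside a tilted space-time box. It is combined with the Bezuidenhout--Grimmett tilted-box argument to handle the missing symmetry.
\end{itemize}
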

Here,  as standard, we write $\lambda_{o,y}=  \mathcal{O}(\|y\|^{-\alpha})$  if there are  $\beta<\infty$ and $N\in \bN$ so that $\lambda_{o,y} \leq \beta \|y\|^{-\alpha} \text{  for all  } y \in \bZ^d$ such that $\|y \| \geq N$.  Moreover, $\lambda_{o,y}\simeq \|y\|^{-\alpha}$ for some $\alpha>d$ if $\text{for some  }\beta>0$ and $R \in \bN$, it holds that 
$ \beta^{-1} \leq \frac{\lambda_{o,y}}{\|y\|^{-\alpha}} \leq \beta  \text{ whenever }  \|y\| \geq R.$

\paragraph{Outline of the paper.} 
In the next section we discuss briefly our main results and how they relate to the existing literature. 
In Section \ref{sec:preliminiaries} we detail the graphical representation of the LRCP and some of its basic consequences. Then, in Sections  \ref{sec:block}-\ref{sec:proofs}, we present the proofs of  the main results. Particularly, in Section \ref{sec:block}, which is devoted to the symmetric case, we provide first the details to the finite space-time condition that constitutes the key step of the aforementioned renormalization argument and from which we then derive Theorem \ref{thm:truncation}a). Here we strongly rely on ideas sketched in \cite[Section 7]{grimmett2002directed}  for the study of a related long-range oriented percolation model. The proof for the non-symmetric case of Theorem \ref{thm:truncation}b) furthermore builds on ideas from \cite{bezuidenhout1994critical} and is presented in Section \ref{sec:non-sym}. In Section \ref{sec:proofs} we present the proofs of Theorem \ref{thm:resilient2} and Theorem \ref{thm:resilient}. They rely on the fact that  any supercritical contact process having finite-range interactions satisfy a finite space-time condition akin to that referred to above, as follows by the results of \cite{bezuidenhout1994critical}. In the last subsection we also discuss briefly some further applications of the resilient property, i.e.\ local survival, complete convergence and the shape theorem.

\section{Discussion}\label{sec:applications} 

\textbf{The tail-regularity conditions.}  
The assumptions on $\lambda \in \Lambda$  that we pose in Theorem \ref{thm:truncation} can be slightly weakened. Indeed, as seen in the proof of Proposition \ref{prop:tailbound} below, the fundamental property that we require in the symmetric case and the proof of Theorem \ref{thm:truncation}a)  is that the bound \eqref{eq:tail_b} holds. This is reminiscent of Condition III  as given in \cite[Section 7]{grimmett2002directed} for the study of a long-range oriented percolation model.

Also note that, whenever  $\alpha>2d+1$, the tail-regularity condition of  Theorem \ref{thm:truncation}a) is stronger than that  assumed in   Theorem \ref{thm:truncation}b) for the non-symmetric case.

The fundamental property that we require in the non-symmetric case is that the corresponding LRCP has at most linear growth of the  LRCP in the sense of Proposition \ref{prop:fstcWS1}. 
The assumptions for this to hold can also be weakened somewhat, see e.g.\ the discussion directly after the statement of Theorem 1.7 in  \cite{chatterjee2016multiple} for more on this for similar bounds obtained for the corresponding long-range first-passage percolation model (i.e.\ the LRCP with $\delta=0$.)

For comparison, a statement akin to Theorem \ref{thm:truncation} was in \cite{MeesterSteif1996}  shown to hold for a long-range bond percolation  on $\bZ^d$, $d\geq 2$, with a parameter $p=(p_{x,y})$ decaying at exponential rate. For this they also assumed that $p_{x,y}$ depends only on $\|x-y\|$ and thereby enforcing that it is translation invariant, symmetric and irreducible. Moreover, by a different method, \cite[Theorem 1.8]{Berger2002}  concluded  the same resilient property for $(p_{x,y})$ satisfying $p_{x,y}\simeq \|y-x\|^{-\alpha}$ with $\alpha <(d,2d)$.  Recently, the results of \cite{MeesterSteif1996} was improved by \cite{baumler2023continuity} to hold under the requirement that $p_{x,y} = \mathcal{O}(\|x-y\|^{-2d})$, still assuming translation invariance, symmetry and irreduciblity. 

\textbf{The truncation property.} 
The question whether an infection parameter is resilient, also known as the truncation property, has earlier been considered for the LRCP with  non-summable infection parameter  \cite{AlvesETAL2017,CamposLima2022,EnterLimaValesin2016}, i.e.\ $\lambda_{\infty}=\infty$. Although the model itself is then ill-defined, the infection parameter may still be resilient in the sense that $\lim_{k \rightarrow \infty} \phi(\leftidx{_{k}}\lambda,\delta)>0$ for every $\delta>0$. For instance, \cite[Theorem 2]{EnterLimaValesin2016} states that if $\sum_{i\geq 1} \lambda_i=\infty$ then the LRCP on $\bZ^d$, $d \geq 2$, satisfies this truncation property whenever the interaction function is given by 
\begin{equation}
\lambda_{x,y} = \left\{\begin{array}{cc} \lambda_{\|i\|} & \text{ if }y=x+ i e_m \text{ with } i \in \bZ; \\0 & \text {otherwise,}\end{array}\right.
\end{equation}
 where $e_1,\dots,e_d$ are the unit vectors in $\bZ^d$. In particular, by this and  Theorem \ref{thm:truncation}, the truncation property holds for infection parameters $\lambda$ of the form  \eqref{eq:examplesInteraction} for any $\alpha>0$. 
 
The papers \cite{AlvesETAL2017,CamposLima2022} provide extensions of the results in  \cite{EnterLimaValesin2016}, particularly to cover cases when the underlying graph is $\bZ$. For instance,  \cite[Theorem 3]{AlvesETAL2017} show that the truncation property holds whenever  there is an $\epsilon>0$ such that $\lambda_{o,y}>\epsilon$ for infinitely many $y$'s. The paper \cite{CamposLima2022} prove the truncation property in a regime corresponding to $\sum_{i\geq 1} \lambda_i^2=\infty$, albeit for an oriented percolation model.

 \textbf{Locality results.} 
Our main results concern the LRCP on $\bZ^d$, $d\geq 1$, and provide general conditions under which it is, in a certain sense, non-sensitive with respect to local changes of the parameter within the supercritical regime. In principle, one could also consider how sensitive the process is with respect to changes in the graph. Results in this direction have previously been obtained in \cite{bezuidenhout1990critical,bezuidenhout1994critical} for finite range models. There it is shown that, if $\phi(\lambda,\delta)>0$ for this model on $\bZ^{d+1}$, $d\geq 1$, then as concluded in \cite[Theorem 2.8]{bezuidenhout1994critical}, the same holds true when restricted to a sufficiently thick one-dimensional slab $[-k,k]^d \times \bZ$. In fact, by monotonicity  this result immediately extends to the LRCP when $\lambda \in \Lambda$ is resilient since it then, by definition, satisfies the truncation property.  
 
For other graphs than $\bZ^d$ the only other continuity results we are aware of are for the ordinary contact process on regular trees \cite{Morrow1994}, but see also the discussion in \cite{Valesin2024} for related results in the setting of the contact process on finite (and random) graphs. 
Such questions have, however, received more attention for ordinary percolation, see in particular \cite{ContrerasMartineauTassion2023,DiskinEtAl2026,EasoHutchcroft2023}. It would be very interesting to explore similar extensions for the LRCP.


\textbf{Extension to other attractive interacting particle systems.} 
In this paper we focus solely on the LRCP, but our arguments extends to a much wider class of processes on $\bZ^d$ after relatively minor modifications. Particularly, the work of \cite{bezuidenhout1994critical} included a  general class of continuous-time and discrete-time  attractive spin-flip dynamics with finite-range dependence.   In their general setting, the finite range assumption appears to be crucial. However, the extension presented in this paper can presumably be adapted without much effort to the sub-class of additive spin-flip dynamics for which there is a graphical representation akin to that in Section \ref{sec:preliminiaries} (see e.g.\ \cite[Chapter 3.6]{LiggettIPS1985}) and of which the LRCP is a key example.

Renormalisation arguments reminiscent to those of  \cite{bezuidenhout1990critical,bezuidenhout1994critical} have also been successfully applied to various variants of the contact process, e.g.\ with aging and other random growth models  \cite{deshayes2014contact,deshayes2025contact}, in a dynamic evolving environment \cite{SteifWarfheimer2008,SeilerSturm2022} or a two-level contact process \cite{Ruibo2022}. Using the methods of the current paper, we expect that these results too may be extended without much further ado to the possibly more natural setting of unbounded interactions.



\section{The graphical representation and its consequences}\label{sec:preliminiaries}

We start by presenting the so-called graphical construction of the LRCP, first introduced in \cite{harris1978additive} for the ordinary contact process and which we here adapt to the long-range setting. 

Fix  $\lambda \in \Lambda$ and $\delta>0$. Independently, for each $x \in \bZ^d$, let $(H_x)$ be a Poisson process with rate $\delta$ on $\bR$, and for each ordered pair $(x,y) \in \bZ^d \times \bZ^d$, let $(I_{x,y})$ be a Poisson process with rate $\lambda_{o, y-x}$ on $\bR$. 
 An event of $(H_x)$ represents a potential \emph{healing event} where the state at $ x $ at that time is set to $ 0 $,  whereas an event of $(I_{x,y})$ represents a potential \emph{infection event}, where the state at $ y $ will be set to $ 1 $ if the state at either $  x $ or $ y $  is $ 1 $.

For each $x,y \in \bZ^d$ and $0 \leq s\leq t$, we say that $(x,s)$ is connected to $(y,t)$ by an \emph{active path}, written $(x,s) \rightarrow (y,t)$, if and only if there exists a càdlàg path in $\bZ^d \times \bR$, starting at $(x,s)$ and ending at $(y,t)$, that goes forwards in time without hitting any healing event  and that may cross to another vertex at the instance of an infection event  in the prescribed direction of the ordered pair. That is, there exists a sequence $x=x_0,x_1,\dots,x_n=y$ in $\bZ^d$ and times $s=t_0<t_1<\dots<t_{n+1} = t$ such that, for each $i=0,\dots,n$, there are no healing events at $x_i$ within time $[t_i,t_{i+1}]$, but there is an infection event at $(x_i,x_{i+1})$ within the same time window. 

Based on the processes introduced above, we now describe how to construct the LRCP. Particularly, for $A \subset \bZ^d$, consider the process $(A_t^A)_{t\geq 0}$ evolving on the subsets of $\bZ^d$ given by
\begin{equation}\label{def:GP_CP}
x \in A_t^A \:  \iff \: \exists \: y \in A \colon \:(y,0) \rightarrow (x,t)
\end{equation} 
Then, the process $(\tilde{\eta}_t)_{t\geq0}$ evolving on $\Omega$ and defined by $\tilde{\eta}_t(x) = 1$ if and only if $(x,t) \in A_t^A$ equals,  in distribution, the LRCP defined via the generator in \eqref{eq contact generator LR} and with initial distribution $\eta_0(x)=1$ if and only if $x \in A$. For a mathematical justification of this we refer to \cite[Chapter 3]{LiggettIPS1985}.
Due to this equivalence, in the following we also refer to the process  defined by  \eqref{def:GP_CP} as the LRCP. Moreover, we write $\bP_{\lambda,\delta}$ to denote its distribution on the (sufficiently large) probability space $(\Omega,\cF,\bP_{\lambda,\delta})$ containing all the processes of the graphical construction introduced above. We also write $(A_t^x)$ whenever $A=\{x\}$ with $x\in \bZ^d$. 

For any $A \subset \bZ^d$ and (reasonable) space-time set $D \subset \bZ^d\times[0,\infty)$, we write $( \leftidx{_{D}}A^{A}_{s})_{s\geq0}$ for the LRCP restricted to $D$, that is,
\begin{equation}
x \in \leftidx{_{D}}A_t^A \:  \iff \: \exists \: y \in A \colon \:(y,0) \rightarrow_D (x,t),
\end{equation}
where $(y,0) \rightarrow_D (x,t)$ denotes the event that there exists an active path from $(y,0)$ to $(x,t)$ fully contained in $D$.
 Later we will often consider cases where $D= C \times [0,T)$ for some $C \subset \bZ^d$ and $T \in (0,\infty)$ and then write  $(\leftidx{_{C}}A^{A}_s)_{s \in [0,T]}$ instead of $(\leftidx{_{C\times[0,T]}}A^{A}_s)_{s\geq 0}$. In fact, we will follow the convention  of \cite[Chapter 1]{LiggettSIS1999} and simply write  $(\leftidx{_{L}}A^{A}_s)_{s\geq 0}$ when  $C=B_L(o)$. We further denote by $\cF_{D}$ the  smallest $\sigma$-algebra that contains all events associated with the graphical construction within $D$.


We now turn to the many useful properties that may be derived for the LRCP using the above graphical representation. 
Although they are quite standard, for completeness and later reference, we next summarize those that will be used frequently in the proofs of our main results presented in the later sections. These properties are derived exactly as for the nearest neighbour model, and we refer to  \cite[Chapter 1]{LiggettSIS1999}, and references therein, for their formal proofs. 

\begin{enumerate}

  \item \textbf{Scale invariance:} 
A first consequence to note  is that, for any $c>0$, the LRCPs with parameters $\lambda \in  \Lambda$ and $\delta >0$ is equivalent to the LRCP with parameters $c\lambda$ and $c\delta>0$ after a reparametrization of time. To see this, note that the first process may be constructed using the  Poisson processes $(H_x)$ and $(I_{x,y})$, whereas for the second we may use $(cH_x)$ and $(cI_{x,y})$, giving a coupling with the claimed property.

 \item  \textbf{Monotonicity:}   
 For $A_1 \subset A_2$, both subset of $\bZ^d$, the graphical representation immediately provides a monotone coupling of $(A_t^{A_1})$ and $(A_t^{A_2})$. Indeed, by using the same healing and infection events, it follows directly from the definition that 
    \begin{equation}
            \bP_{\lambda,\delta} \left( A_t^{A_1} \subset  A_t^{A_2}\: \forall \: t \in [0,\infty) \right) = 1.
    \end{equation}   
    We write $\lambda^{(1)} \leq \lambda^{(2)}$ for two elements in $\Lambda$ when $\lambda_{o,y}^{(1)} \leq \lambda_{o,y}^{(2)}$ for all $y \in \bZ^d$.  
Then, in a similar vein as above, for any     $\lambda^{(1)} \leq \lambda^{(2)}$ 
     and $\delta_2\leq \delta_1$, we can construct a monotone coupling $\widehat{\bP}$ of $\bP_{\lambda_1,\delta_1}$ and $\bP_{\lambda_2,\delta_2}$ such that, for any $A \subset \bZ^d$,
    \begin{equation}
        \widehat{\bP}\left( A_t^{(A,1)}(x) \leq A_t^{(A,2)} \: \forall \: t \in [0,\infty) \right) = 1,
    \end{equation}
    where $A_t^{(A,i)}$ denotes the LRCP with initial state $A$ and infection parameter $\lambda_i$, $i=1,2$, respectively.
     Indeed, consider for each $x\in \bZ^d$ independent Poisson precesses  $(H_x^{(2)})$ and $(\hat{H}_x)$ of rate $\delta_2$ and $\delta_1-\delta_2$, where the latter represents additional healing events for the  $A_t^{(A,1)}$ process.  Further,  for each $x,y \in \bZ^d$, let $(I_{x,y}^{(1)})$ be a Poisson process with rate $\lambda_{o, y-x}^{(1)}$ and include an additional Poisson process $(\hat{I}_{x,y})$ with rate $\lambda_{x,y}^{(2)} - \lambda_{x,y}^{(1)} \geq 0$, representing additional infection events for the  $A_t^{(A,2)}$ process.  
 From this one can construct $A_t^{(A,1)}$ as above  using the infection events of $(I_{x,y}^{(1)})$ and the healing events of  $(H_x^{(2)})$ and $(\hat{H}_x)$, whereas $A_t^{(A,2)}$ is constructed using the infection events of $(I_{x,y}^{(1)})$ and $(\hat{I}_{x,y})$, and the  healing events of  $(H_x^{(2)})$.  
 This will later be referred to as the natural monotone coupling of the LRCP.

\item \textbf{The upper invariant measure:} 
Using the graphical representation, one can give a more explicit construction of the upper invariant measure $\bar{\nu}_{\lambda,\delta}$.
 Indeed, for $A \subset \bZ^d$ finite, we have that 
\begin{align}
\bP_{\lambda,\delta}(\eta_t^{\bar{1}}(x) = 1 \text{ for some } x \in A) &= \bP_{\lambda,\delta} \left( A \cap A_t^{\bZ^d} \neq \emptyset \right)
\\ &= \bP_{\lambda,\delta} \left( \exists \: y \in \bZ^d \colon (y,-t) \rightarrow (x,0) \text{ for some } x \in A \right),
\end{align}
where the latter equality follows by translation invariance.  Hence, since the upper invariant measure is the distribution towards which $\bP_{\lambda,\delta}(\eta_t^{\bar{1}} \in \cdot)$ converges as $t \rightarrow \infty$,  it follows that 
\begin{align}
&\nu_{\lambda,\delta}(\{\omega \colon \omega(x) = 1 \text{ for some } x\in A)
\\ = &\bP_{\lambda,\delta} \left( \text{for each } t>0 \: \exists \: y \in \bZ^d \colon (y,-t) \rightarrow (x,0) \text{ for some } x \in A \right).
\end{align}
More generally, from this one obtains that, for any $\sigma \in \Omega$ and $A \subset \bZ^d$ finite, 
\begin{align}
&\nu_{\lambda,\delta}(\{\omega \colon \omega(x) = \sigma(x) \text{ for  } x\in A)
\\ = &\bP_{\lambda,\delta} \left( \text{For } x \in A \: \exists \: y \in \bZ^d \colon (y,-t) \rightarrow (x,0) \text{ for each } t<0 \text{ if, and only if }  \sigma(x)=1 \right).
\end{align}

    \item \textbf{Positive association:} Recall the general probability space $(\Omega,\cF,\bP_{\lambda,\delta})$ on which the graphical representation is defined. We associate a partial ordering to $\Omega$ by $\omega \leq \omega'$ if for all $x \in \bZ^d$ it holds that $H_x(\omega') \subseteq H_x(\omega)$, and for all pairs $x,y \in \bZ^d$ it holds that $I_{x,y}(\omega) \subseteq I_{x,y}(\omega')$. An event $E \in \cF$ is said to be a \emph{positive event} if $\omega \in E$ implies that $\omega' \in E$ for all $\omega'$ such that $\omega\leq \omega'$. From the graphical construction it follows that positive event are positively correlated, a results than goes back to \cite{harris1978additive}. That is, for any positive events $E_1$ and $E_2$ in $\cF$, it holds that
    \begin{equation}
    \bP_{\lambda,\delta}(E_1\cap E_2) \geq \bP_{\lambda,\delta}(E_1) \bP_{\lambda,\delta}(E_2).
    \end{equation}

    \item \textbf{Strong Markov property}: The graphical construction  implies that the LRCP satisfies the strong Markov property. That is, if $\tau$ is a finite stopping time with respect to the $\sigma$-fields $(\cF_t)$ where $\cF_t \coloneqq \cF_{\bZ^d \times [0,t]}$, then restricting to the healing and infection events for times $t\geq \tau$, we can readily construct the process $A_t^{A,\tau}$,  for $t\geq \tau$ and with initial state $A$ (at time $\tau$) so that, for any stopping times $\sigma\leq \tau$ and time $t\geq \tau$, we have that 
  $  A_t^{A,\sigma} = A_t^{A_{\tau}^{A,\sigma},\tau}$. 
\end{enumerate}


\section{The symmetric case}\label{sec:block}

In this section we provide the key steps in order to modify the renormalization arguments of \cite{bezuidenhout1990critical} for the ordinary contact process to the long-range setting, restricting to $\lambda \in \Lambda$ symmetric and the statement of Theorem \ref{thm:truncation}a). 
For this, we mostly follow the exposition of \cite{LiggettSIS1999} and argue that the equivalent of  \cite[Theorem 2.12]{LiggettSIS1999}, which corresponds to \cite[Lemma 4.1]{bezuidenhout1990critical}, holds for our more general model with possibly long-range interactions. 
The modifications required for this rely on ideas originating in \cite[Section 7]{grimmett2002directed} for long-range oriented percolation. 
We expand on this and provide the details in the continuous-time setting. 
Further, to cover the case where $\lambda\in \Lambda$ is not necessarily symmetric as in Theorem \ref{thm:truncation}b) we furthermore build on the approach of \cite{bezuidenhout1994critical}. The proof of this latter case is presented in the proceeding section.

Before presenting the first result of this section, we introduce some more notation. 
Firstly, for $L \in [0,\infty)$, $T \in [0,\infty)$ and $R \in [0,\infty)$, we let 
\begin{equation} B_L \coloneqq \bZ^d \cap [-L,L]^d, \quad B_{L,T} \coloneqq B_L \times [0,T], \quad S_{L,T,R} \coloneqq B_{L+R,T} \setminus B_{L,T}.\end{equation} 
Writing $+$ in the superscript of the above defined sets denotes their restriction to the positive coordinates, that is,
\begin{equation}
B_L^+ \coloneqq B_L \cap  [0,L]^d, \quad B_{(L,T)}^+ \coloneqq B_L^+ \times   [0,T].
\end{equation}
Writing $x =(x_1,\dots,x_d)$ for $x \in \bZ^d$, for $i=1,\dots,d$, we also consider the sets
\begin{align}
    &S_{L,T,R}^{i,+} \coloneqq \{ (x,t) \in B_{L+R,T}^+ \setminus B_{L,T}^+ \colon x_i > L \},
    &S_{L,T,R}^{i,-} \coloneqq \{ (x,t) \in B_{L+R,T}^+ \setminus B_{L,T}^+ \colon x_i < -L \}. 
\end{align}
Lastly, for $x \in \bZ^d$ and $A \subset \bZ^d$, we write $A(x)$ for the set $A$ shifted by $x$, that is, 
$A(x) \coloneqq \{ y \in \bZ^d \colon y-x \in A\}.$ 

\begin{theorem}[Finite space-time condition for symmetric interactions]\label{thm:fstc}
Let $\lambda \in \Lambda$ be symmetric and satisfy  $\lambda_{o,y} \simeq \|y\|^{-\alpha}$ for some $\alpha>d$. 
Then, for every $\delta>0$ such that $\phi(\lambda,\delta)>0$, there is $r>1$ so that the following  holds:  for every  $\epsilon > 0$ there are choices of $A \subset \bZ^d$ finite and $L, T \in (0,\infty)$ such that 
\begin{align}
\label{C1}\tag{C1}
 &\P_{\lambda,\delta} \left(\exists x\in B_L^+ \colon  A(x)  \subset \leftidx{_{2L}}{A}{_{T}^{A}} \right)> 1-\epsilon;
 \\ &    \label{C2}\tag{C2}
   \P_{\lambda,\delta}\left(  \exists (x,t) \in S_{L,T,(1+2r)L}^{i,\pm}  \colon A(x)  \subset \leftidx{_{2(1+r)L}}{A}{_{t}^{A}} \right)> 1-\epsilon.
 \end{align}
\end{theorem}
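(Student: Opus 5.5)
We adapt the Bezuidenhout--Grimmett argument as presented in \cite[Section I.2]{LiggettSIS1999}. The one genuinely new ingredient is a tail bound: with high probability, infections that jump out of a large space-time box land at distance at most a constant multiple of $L$ from it. This is where $\lambda_{o,y}\simeq\|y\|^{-\alpha}$ and the choice of $r$ enter.

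\textbf{Step 1 (many infected sites in a box).} Since $\phi(\lambda,\delta)>0$, for each $N$ we choose $A=B_n$ with $n$ large, so that $\P_{\lambda,\delta}(A^{A}_t\neq\emptyset\ \forall t)>1-\epsilon^2$. We then show, as in \cite[Prop.~I.2.7--2.8]{LiggettSIS1999}, that for $L,T$ large the process restricted to $B_L$ (now a finite-range restriction, so the classical argument applies verbatim) either survives to time $T$ with at least $N$ infected sites in $B_L\times\{T\}$, or emits at least $N$ infections through the lateral boundary of $B_{L,T}$.

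\textbf{Step 2 (tail estimate).} This is the main obstacle. Jumps leaving $B_L$ may land far away, and the restricted processes $\leftidx{_{2L}}A$ and $\leftidx{_{2(1+r)L}}A$ must capture them. We bound the expected number of infection arrows from $B_{L,T}$ landing outside $B_{rL}$. For each site $x\in B_L$ this contributes at most $T\sum_{\|y\|\ge (r-1)L}\lambda_{o,y}\lesssim T\big((r-1)L\big)^{d-\alpha}$. Summing over sites only touched by the process, we control this via the expected occupied space-time volume near the boundary, which is comparable to the number $N$ of boundary-emitted infections. Here the comparability $\lambda_{o,y}\simeq\|y\|^{-\alpha}$ gives the ratio $\sum_{\|y\|>sL}\lambda_{o,y}\big/\sum_{\|y\|>L}\lambda_{o,y}\le C s^{d-\alpha}$ (the analogue of Condition III in \cite{grimmett2002directed}). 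As a consequence, a fixed large $r$ ensures that, with probability bounded below, a positive fraction of the outgoing infections land within distance $rL$ of the box. This $r$ depends only on $\lambda$ and not on $\epsilon$.

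\textbf{Step 3 (symmetry and sorting into faces).} Using symmetry under sign changes and coordinate permutations, together with the square-root trick (positive association, \cite[Lemma I.2.?]{LiggettSIS1999}): if the union of $2d$ (or $2^d$) equally likely increasing events has probability $>1-\eta$, then each has probability $>1-\eta^{1/(2d)}$. This produces, with high probability, either $N$ infected sites in $B_L^+\times\{T\}$, or $N$ infected space-time points in a single $S^{i,\pm}_{L,T,(1+2r)L}$. The long-range jumps only enlarge the relevant region, which accounts for the width $(1+2r)L$.

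\textbf{Step 4 (regrowing a copy of $A$).} From each of these $N$ points, run the process for a unit of time. With probability $p>0$ independent of $L$ it produces a translate $A(x)$ inside a box around the point. By the strong Markov property and spatial separation, we extract $\sim N/C$ nearly independent trials, so $N$ large gives probability $>1-\epsilon$. Time-shifting for \eqref{C2} is handled by the standard trick of allowing $t\le T$ and choosing $T$ so that the lateral case yields points at times $t$ with $x$ in the prescribed region. Combining Steps 1--4 gives \eqref{C1} and \eqref{C2}, with $\leftidx{_{2L}}A$ and $\leftidx{_{2(1+r)L}}A$ large enough to contain all the paths used.
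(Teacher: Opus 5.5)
Your overall architecture is the right one: Liggett's version of Bezuidenhout--Grimmett, with a Condition-III-type tail ratio fixing $r$ independently of $\epsilon$. But the places where long range actually matters are not handled.

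First, restricting to $B_L$ does not let Liggett's Proposition I.2.8 go through verbatim. Its conditional extinction bound is phrased through infected points near the boundary of $B_{L,T}$, whereas here every infected space-time point of the box, however deep, sends arrows out of $B_L$ at positive rate. The boundary count must be replaced by something that controls escapes from the whole infected region $I$ of the restricted process. One choice is the $\cF_{L,T}$-measurable escape intensity $E_{L,T}=\sum_{x\in B_L}\sum_{y\notin B_L}\lambda_{x,y}\int_0^T\ind_{\{(x,s)\in I\}}\,ds$. On $\{E_{L,T}+|{}_{L}A^{A}_{T}|\le k\}$ the conditional probability of extinction is bounded below by a constant depending only on $k$.

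Second, your Step 2 fails as written. Escaping jumps are not emitted mainly near the boundary; for $d<\alpha<d+1$ the interior dominates. So occupied volume near the boundary does not control far jumps. Moreover, a conclusion holding only ``with probability bounded below'' cannot produce a $1-\epsilon$ bound. What is needed is a comparison at each $x\in B_L$ separately. On one side, $\sum_{y\notin B_{(1+2r)L}}\lambda_{x,y}\le C(rL)^{d-\alpha}$. On the other side (say for $x_1\ge 0$), $\sum_{y\notin B_L}\lambda_{x,y}\ge\sum_{z\colon z_1>L}\lambda_{o,z}\ge cL^{d-\alpha}$, by the lower half of $\simeq$. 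Integrating over $I$ gives the deterministic inequality $E_{L,T}\le(1-\xi)^{-1}E'_{L,T}$ with $\xi=Cr^{d-\alpha}/c<1$, where $E'_{L,T}$ is the intensity of arrows from $I$ landing in $S_{L,T,2rL}$. Conditionally on $\cF_{L,T}$, these arrows form a Poisson process. Hence a large $E'_{L,T}$ yields with high probability many actual, well-separated landing points in the annulus, for instance by Hoeffding over blocks of mean at least $\ln 2$.

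Third, your Steps 1 and 3 only give a dichotomy: many top points \emph{or} many lateral points. But (C1) and (C2) must each have probability $>1-\epsilon$ for the same $A,L,T$, and a likely union says nothing about each alternative separately. The missing device is the one in Liggett's proof of Theorem I.2.12. Prove the product form $\limsup_j\bP(E_{L_j,T_j}\le M)\,\bP(|{}_{L_j}A^{A}_{T_j}|\le 2^dN)\le\bP(A^{A}_s=\emptyset\text{ for some }s)<\eta^2/2$. Then use continuity in $t$ to choose $T_j$ with $\bP(|{}_{L_j}A^{A}_{T_j}|>2^dN)=1-\eta$ exactly. This forces $\limsup_j\bP(E_{L_j,T_j}\le M)\le\eta/2$, so the top event and the large-escape event are likely simultaneously. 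After that, your Steps 3--4 give (C1) and (C2): the square-root trick over the $2^d$ orthants and $2d$ faces, then regrowth of separated copies of $A$. Your remark in Step 4 about ``choosing $T$'' points in this direction but does not supply the argument.
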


Theorem \ref{thm:fstc}  forms the key ingredient for the aforementioned renormalization argument of \cite{bezuidenhout1990critical}. 
Before presenting its proof, which is given in the proceeding subsection, we show how it implies Theorem \ref{thm:truncation}a).    
 For this, we first present the following application of Theorem \ref{thm:fstc} that constitutes the fundamental building block of the renormalization argument. 

\begin{proposition}\label{prop:renormalization1}
Let $\lambda \in \Lambda$  be symmetric and satisfy  $\lambda_{o,y} \simeq \|y\|^{-\alpha}$ for some $\alpha>d$, and let $\delta >0$ so that $\phi(\lambda,\delta)>0$. Then, for any  $\epsilon > 0$, there exists $A \subset \bZ^d$ and $L, T \in [0,\infty)$   such that
\begin{align}
\label{B1}
 \P_{\lambda,\delta} \left(\exists (y,t) \in D \colon A(y)  \subset \leftidx{_{5L}}{A}{_{6T-s}^{A(x)}} \right)> (1-\epsilon) \quad \text{ for any } (x,s)\in B_{L,T},
 \end{align}
 where $D \coloneqq  [L,3L] \times [-L,L]^{d-1} \times [5T,6T]$.
\end{proposition}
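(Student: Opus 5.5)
This follows the classical argument for \cite[Proposition 2.22]{LiggettSIS1999} (equivalently \cite[Lemma 4.2]{bezuidenhout1990critical}). We obtain the bound by chaining a bounded number of applications of Theorem \ref{thm:fstc}, each with a small error, using the strong Markov property, monotonicity and the symmetries of $\lambda$. First fix $\epsilon'>0$, to be chosen small in terms of $\epsilon$, and apply Theorem \ref{thm:fstc} with $\epsilon'$. This gives a finite $A$ and $L,T$ satisfying \eqref{C1} and \eqref{C2}. By the symmetry assumption, \eqref{C1} also holds with $B_L^+$ replaced by any reflected orthant. Likewise \eqref{C2} holds for every face $S^{i,\pm}$.

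\textbf{Step 1 (moving in time).} Start from $A(x)$ at time $s$, where $(x,s)\in B_{L,T}$. Repeatedly applying \eqref{C1}, each time choosing the orthant so that the new centre drifts back toward the axis $\{y_2=\dots=y_d=0\}$, keeps a translate $A(x_k)$ infected at the times $s+kT$ inside the box. Each step uses only the graphical construction inside $B_{2L}(x_{k-1})\times[s+(k-1)T,s+kT]$. The strong Markov property at the stopping times when the next copy appears, together with the monotonicity $A(x_k)\subset$ current configuration, gives success with probability at least $1-k\epsilon'$. The space-time region stays inside $B_{5L}$ as long as centres remain in $[-3L,3L]^d$.

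\textbf{Step 2 (moving in space).} Similarly, a use of \eqref{C2} in the $+e_1$ direction shifts the centre into the slab $x_1\in(L,(2+2r)L]$ at a random time in $[0,T]$. This is followed by \eqref{C1}-steps in suitable orthants. The aim is a composite deterministic recipe, using at most $N$ steps with $N$ independent of $\epsilon'$, that brings the centre into $[L,3L]\times[-L,L]^{d-1}$ at a time in $[5T,6T]$. Since time increments of \eqref{C2} are random in $[0,T]$, we use \eqref{C1} steps that advance time by exactly $T$ to reach the window. The choice of orthant at each step adjusts for the random position and time, and symmetry makes every needed orthant or face available. Taking $\epsilon'=\epsilon/N$ then gives \eqref{B1}.

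\textbf{Main obstacle.} The difficulty is the bookkeeping. We must keep every restricted process inside $B_{5L}$ even though \eqref{C2} uses the larger box $B_{2(1+r)L}$ with $r>1$, and we must land precisely in the time window $[5T,6T]$ for all starting $s\in[0,T]$. I would first try to rescale, replacing $L$ by a multiple $L'$ so that $(1+r)L\ll L'$. Alternatively, one can apply Theorem \ref{thm:fstc} to produce blocks of size $L$ and define the target box in units of $L'=cL$, redoing the geometry as in \cite{LiggettSIS1999}. Time alignment is handled by choosing how many \eqref{C1} steps to take after the \eqref{C2} step, depending on the observed time. If the window is too tight, one can run \eqref{C1} with time scale $T/K$, in the spirit of the argument of \cite{bezuidenhout1990critical}.
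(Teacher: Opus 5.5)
Your strategy matches the paper's. You enlarge the block by constants depending only on $r$, then chain a bounded number of applications of \eqref{C1} and \eqref{C2} through the strong Markov property, using reflections to pick the face or orthant at each step. Note that this needs $A$ itself to be symmetric, not only $\lambda$; the proof of Theorem \ref{thm:fstc} provides this. The paper fixes the enlargement explicitly. It works at scale $a=R_1L$, $b=(R_1+R_2+1)T$ with $R_1=2(r+2)+1$ and $R_2=6(R_1+1)$. It steers the centre to the right with \eqref{C2} steps while keeping the transverse coordinates in $[-a,a]$, then burns time with \eqref{C1} steps inside $[a,3a]\times[-a,a]^{d-1}$, and finally renames $(a,b)$ as $(L,T)$.

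Your sketch is incomplete on the time scale, and the fallback you offer there does not work.

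\begin{itemize}
\item You correctly see that space must be enlarged to $a=cL$ with roughly $c\ge 2+2r$. A single \eqref{C2} step uses $B_{2(1+r)L}$ and can move the centre by up to $(2+2r)L$, transversally as well.
\item But then getting from $x_1\approx -a$ to $[a,3a]$ may take about $2c+1$ applications of \eqref{C2}, each consuming up to $T$. So the window $[5T,6T]$ can be overshot before any alignment starts.
\item Running \eqref{C1} at time scale $T/K$ is not available. Theorem \ref{thm:fstc} gives \eqref{C1} and \eqref{C2} only at the single pair $(L,T)$ it produces.
\item Time must be enlarged instead: take $b=KT$ with $K=K(r)\ge1$ and $b+(2c+1)T\le 5b$.
\end{itemize}

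With that choice, your adaptive rule works. Apply \eqref{C1} until the current time first enters $[5b,6b]$, choosing orthants towards the centre of the target box. This uses at most $6K$ further steps, so $N\le 2c+1+6K$ is independent of $\epsilon'$, and $\epsilon'=\epsilon/N$ closes the argument.

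This adaptivity is genuinely needed. The start time $s$ ranges over $[0,b]$ and the \eqref{C2} durations are random, so the arrival time after any fixed number of steps has spread exceeding $b$. Hence a fixed count of \eqref{C1} steps, such as the paper's $R_2$, cannot land in $[5b,6b]$ for every starting point. On this point your sketch is more careful than the paper's.
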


Note that the tail regularity is used in order to guarantee that, in the above inequality and the definition of $D$,  the constants $3, 5$ and $6$ do not depending on $A$, $L$, $T$ nor on $\epsilon$.

\begin{proof}[Proof of Theorem \ref{thm:truncation}a)]
Provided with Proposition \ref{prop:renormalization1}, we can apply the by now relatively standard  renormalization argument as in \cite{bezuidenhout1990critical}, giving a coupling with a supercritical oriented percolation process by suitable choice of $\epsilon$.  See also \cite{LiggettSIS1999}, or the more detailed argument provided in \cite{deshayes2014contact} (albeit for a slightly more general model named \emph{the contact process with aging}).  Indeed, \cite[Lemma 4.10]{deshayes2014contact}  corresponds to Proposition \ref{prop:renormalization1}. From this, the renormalization argument as given in  \cite[Theorem 4.11 and Corollary 4.12]{deshayes2014contact} transfers directly to our setting only after replacing the $[-n,n]^d$-box therein by the set $A$ in Proposition \ref{prop:renormalization1}, and from this we conclude the proof.
\end{proof}

\begin{proof}[Proof of Proposition \ref{prop:renormalization1}]
The argument follows along the same line of reasoning as that in \cite[Lemma 18 and 19]{bezuidenhout1990critical} or \cite[Proposition 2.20 and 2.22]{LiggettSIS1999}. That is, we will apply the finite space-time condition of Theorem \ref{thm:fstc} repeatedly, combined with the strong Markov property and positive association of the LRCP, a finite number of times to direct the infection to a designated area. 

 Let $r>1$ be  as is in Theorem \ref{thm:fstc}. 
Then, for fixed  $\epsilon > 0$, let $\tilde{\epsilon}>0$ be such that $(1-\tilde{\epsilon})^{R_1+R_2} \geq 1-\epsilon$, where we set 
\begin{equation}
    R_1 = 2 (r +2 ) +1 \text{ and } R_2 = 6 (R_1+1).
\end{equation}
Further, fix $A \subset \bZ^d$ finite and $L, T \in (0,\infty)$ such that the inequalities \eqref{C1} and \eqref{C2} of Theorem \ref{thm:fstc} holds with probability $1-\tilde{\epsilon}$. 
Then, consider a space-time point $(x,s) \in B_{a,b}$, where we set $a=R_1 L$ and $b=(R_1+R_2+1)T$.

In the following we will construct a sequence $(x_i,t_i)$ of space-time points iteratively. These will be random, but due to the strong Markov property of the LRCP, the distribution of $(x_{l+1},t_{l+1})$ will only depend on $(x_{l},t_{l})$ and not on previously explored space-time points. 

Firstly, set $x_0 =x$ and $t_0=s$. Then, let $(x_1,t_1)$ be the first (with respect to the temporal order) space-time point in $(x_0,t_0) + S_{L,T,(1+2r)L}$ to fulfil the event within the probability in \eqref{C2} and so that 
\begin{equation}x_1 \cdot e_1 \geq x_0 \cdot e_1 + L \text{ and } x_1\cdot e_i \in [-a,a] \text{ for each } i=2,\dots,d,
\end{equation}
i.e.\ the first coordinate of $x_1$ is at least at distance $L$ from that of $x_0$ to the right, and all other coordinates are within $[-a,a]$. 
Note that, as a consequence of Theorem \ref{thm:fstc}, symmetry of the model and since $(r+1) < R_1$, the existence of  such a point  has probability of at least  $1-\tilde{\epsilon}$. Moreover, since the LRCP is Markovian and $(x_1,t_1)$ is the first such point, the event is contained in $\cF_{B_{2(1+r)L,t_1}}$. 

Using the strong Markov property of the LRCP, we can now iterate this procedure of applying \eqref{C2} successively to find space-time points $(x_l,t_l)_{l=2,\dots,\tau_1}$ located within $(x_{l-1},t_{l-1}) + S_{L,T,(1+2r)L}$. Here, 
\begin{equation}\tau_1 \coloneqq \inf \{ l \geq 1 \colon x_l \cdot e_1 \geq a \}
\end{equation} denotes the first time that we find a space-time point with its first coordinate to the right of $a$. Moreover, we additionally require that, for each $l \leq \tau_1$, 
\begin{equation}\label{eq:C2_1}
    x_l \cdot e_1 \geq x_{l-1} \cdot e_1 + L \text{ and } x_l \cdot e_i \in [-a,a] \text{ for each }i =2,\dots, d. 
\end{equation}
Note that, by the bound in \eqref{C2}, and using the symmetry and translation invariance of the model, the probability of succeeding to find such a space-time point fulfilling the event within the probability in \eqref{C2}  at each iteration step is at least $1-\tilde{\epsilon}$. 

Now, since for each iteration we have that  $x_l \cdot e_1 - x_{l-1} \cdot e_1 \in [L,(1+2r)L]$, we necessarily have that $1<\tau_1 \leq R_1$.  In case $\tau_1<R_1$, we apply \eqref{C1} in a similar vein to find additional space-time points $(x_l,t_l)_{l=\tau_1+1,\dots,R_1}$ located so that  $x_l \in B_L(x_{l-1})$ and $t_l=t_{l-1}+T$. Again, taking advantage of that the bound of \eqref{C1} holds for all symmetric translations of $B_L^+$, we can require that these points remain within $[a,3a]\times[-a,a]^{d-1}$ and still have that we succeed in finding such a point with a probability of at least $1-\tilde{\epsilon}$.  Thus, after exactly $R_1$ iterations we have found a space time point $(x_{R_1},t_{R_1})$ with $x_{R_1} \in [a,3a]\times[-a,a]^{d-1}$ and $t_{R_1} \in [0, (R_1+1) T]$ and so that $A(x_{R+1})  \subset A_{t_{R_1}}^{A}$. Moreover, due to positive association and the strong Markov property, the  probability of finding such a space-time point is at least equal to $(1-\tilde{\epsilon})^{R_1}$.

Thus, so far, we have managed to control the spatial location of an infected copy of the set $A$, as in the statement of the proposition. We next show how the temporal location can be controlled. For this we apply \eqref{C1}, again iteratively, $R_2$ number of times to find space time-points $(x_l,t_l)_{l=R_1,\dots,R_1+R_2}$, again fulfilling that  $A(x_{l})  \subset A_{t_{l}}^{A}$. As above we require that for each iteration the  spatial coordinate is always contained in $[a,3a]\times[-a,a]^{d-1}$. Moreover, since  $t_{l+1}-t_l=T$ for each $l=R_1,\dots R_1+R_2-1$, we necessarily have that $t_{R_1+R_2} \in [t_{R_1} + R_2T, t_{R_1} + (R_2+1)T] \subset [5b,  6b]$. Due to positive association and the strong Markov property, the  probability of finding such a sequence of space-time points is at least equal to $(1-\tilde{\epsilon})^{R_1+R_2}$. Moreover,  for the entire construction, we were always restricted to the graphical representation within $B_{5a,6b}$, and from this we conclude the proof with $L=a$ and $T=b$.
\end{proof}

\subsection{Proof of Theorem \ref{thm:fstc}}

The proof of Theorem \ref{thm:fstc} is derived from several propositions that we first present. These follow partly the exposition as given in \cite{LiggettSIS1999} for the nearest neighbour model. The proofs of the propositions are postponed to later  subsections.

The first proposition  corresponds to  \cite[Propositions I.2.1 and I.2.2]{LiggettSIS1999}, and parts of the introductory argument in the proof of \cite[Theorem I.2.12]{LiggettSIS1999}.

\begin{proposition}\label{prop1.1}
    Let $\lambda \in \Lambda$ and $\delta>0$ with $\phi(\lambda,\delta)>0$. Then, for  every $\epsilon>0$  there exists a finite set $A\subset \bZ^d$ such that
\begin{align}\label{eq:prop1.1_1}
 &   \bP_{\lambda,\delta} 
\left( \leftidx{_{A}}A_1^o \supset A \right) >0;
\\ &  \bP_{\lambda,\delta} \left( A_t^A = \emptyset \text{ for some } t>0 \right) < \frac{\epsilon^2}{2}, \label{eq:prop1.1_2}
    \end{align}
    and so that, for any $N\in \bN$, there exists sequences $(L_j)$ and $(T_j)$ with $L_j=L_j(N,A)$ and $T_j=T_j(N,A)$, both diverging to infinity and satisfying, for each $j$,  
\begin{equation}\label{eq:Prop1.2}
\bP_{\lambda,\delta} \left( \left|  \leftidx{_{L_j}}A_{T_j}^{A} \right| > N \right) = 1-\epsilon.
\end{equation}
    Moreover, if $\lambda$ is symmetric, then the set $A$ can be chosen to be symmetric too, in the sense that, for any $x \in A$ we also have that $A$ contains any $\tilde{x}$ obtained by either changing the signs of any of the $d$ components of $x=(x_1,\dots,x_d)$ or after permuting them.    \end{proposition}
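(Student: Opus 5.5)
The plan follows Liggett's argument for \cite[Propositions I.2.1 and I.2.2]{LiggettSIS1999}. The two new features are that $A$ must work for long-range $\lambda$, and that $A$ should be symmetric when $\lambda$ is.

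\emph{Choice of $A$.} Since $\phi(\lambda,\delta)>0$, the standard argument gives $\bP_{\lambda,\delta}(A_t^{B_n}=\emptyset \text{ for some } t)\to 0$ as $n\to\infty$. By translation invariance and independence of the graphical construction in disjoint regions, the probability that all of the translates $A^{x}$, $x\in B_n$, die out is at most the probability that $A^{B_n}$ dies. By the Harris–FKG inequality (dying out is a decreasing event), that probability dominates $\prod$ of the individual death probabilities, which is $(1-\phi)^{|B_n|}\to 0$. Alternatively one can use the $0$–$1$ argument $\bP(\text{survive}\mid \mathcal F_t)\to \ind_{\text{survive}}$ exactly as in Liggett. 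So we pick $n$ large enough that \eqref{eq:prop1.1_2} holds with $A=B_n$. A box $B_n$ is automatically symmetric under sign changes and coordinate permutations, which handles the last claim.

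For \eqref{eq:prop1.1_1} we need $\bP(\leftidx{_{A}}A_1^o\supset A)>0$. Since $\lambda_{o,y}>0$ for some $y$, it is not obvious that every site of $B_n$ is reachable from $o$. I therefore replace the box by a set adapted to the support of $\lambda$. Let $G$ be the additive semigroup generated by $\{y:\lambda_{o,y}>0\}$, and in the symmetric case note that $G$ is a group invariant under the symmetries. I take $A=\{$points reachable from $o$ in at most $m$ steps of the support$\}$, or its symmetrization. Reaching every point of such a finite set within time $1$, with no healing at $o$ and all paths staying inside $A$, is a finite conjunction of Poisson events of positive probability, so \eqref{eq:prop1.1_1} holds. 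To also get \eqref{eq:prop1.1_2}, I need $A_t^{A}$ to survive with probability close to $1$. Surviving processes started from a single point eventually contain infected sets that are large in the reachable sub-lattice, so by the same FKG/independence argument the death probability decays in $m$. For symmetric $\lambda$ the reachable sets are symmetric, which gives the symmetry of $A$.

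\emph{Large infected sets in finite boxes.} For \eqref{eq:Prop1.2} I follow Liggett's Proposition I.2.2. First, $\bP(|A_t^{A}|\le N,\ A_t^A\neq\emptyset \text{ i.o.})=0$, because whenever $|A_t^A|\le N$ there is a probability at least $c(N)>0$ (all $\le N$ sites heal within one time unit, with total infection rate at most $N\lambda_\infty$) that the process dies within one unit of time; Lévy's $0$–$1$ law then applies. Hence $\bP(|A_t^A|>N)\to \bP(\text{survival})\ge 1-\epsilon^2/2$ as $t\to\infty$. Next, for fixed $T$, $\leftidx{_{L}}A_T^{A}\uparrow A_T^{A}$ as $L\to\infty$ almost surely. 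This needs finiteness of the set of sites influencing $A_T^A$, which follows from summability \eqref{eq:lambda_infty} via comparison with a branching process of rate $\lambda_\infty$. So the probability is continuous, increasing in $L$, and converges to $\bP(|A_T^A|>N)$. Choosing $T_j\uparrow\infty$ with $\bP(|A_{T_j}^A|>N)>1-\epsilon$, and then using continuity and monotonicity in $L$ (and in $T$, which is continuous since there are almost surely no jumps at fixed times), we can choose $L_j\uparrow\infty$ achieving exactly $1-\epsilon$, as in Liggett.

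\emph{Main obstacle.} The main difficulty is \eqref{eq:prop1.1_1}, namely reconciling the reachability of $A$ within $A$ with the survival bound for a general, possibly non-irreducible $\lambda$. I would first try restricting to the sub-lattice generated by the support of $\lambda$ and running the argument there.
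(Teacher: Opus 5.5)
There is a genuine gap in your choice of $A$. In the long-range case the support $\{y\colon \lambda_{o,y}>0\}$ is typically infinite; for $\lambda_{o,y}\simeq\|y\|^{-\alpha}$ it contains every $y$ with $\|y\|\geq R$. Then the set of points reachable from $o$ in even one step is already infinite, so your $A$ is not finite. Moreover \eqref{eq:prop1.1_1} fails outright. Comparison with a Yule process gives $\bE|A_1^o|\le e^{\lambda_\infty}$, so $A_1^o$ is a.s.\ finite and cannot contain an infinite set. Your ``finite conjunction of Poisson events'' is therefore an infinite one, of probability zero. The set has to be localised, e.g.\
$C_n\coloneqq\{x\in B_n\colon x \text{ is reached from } o \text{ by a chain of positive-rate jumps inside } B_n\}$.
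This set is finite, and every point of it is reached by a chain inside $C_n$. Hence \eqref{eq:prop1.1_1} holds by forcing finitely many arrows in $[0,1]$ and no healing in $C_n$ during $[0,1]$. It is also invariant under sign changes and permutations when $\lambda$ is symmetric, because $B_n$ is.

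The survival bound is also not justified as written. Harris--FKG for the decreasing events $D_x=\{A^x_t=\emptyset\text{ for some }t\}$ gives $\bP(\bigcap_x D_x)\ge\prod_x\bP(D_x)$. That is a lower bound, so $(1-\phi)^{|B_n|}\to 0$ tells you nothing. Your ``same FKG/independence argument'' for the reachable sets inherits this defect.

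What works, and what the paper does, is the martingale argument you mention only in passing. With $g(B)\coloneqq\bP(A^B_s\neq\emptyset\ \forall s)$, the Markov property gives $\bP(\text{survival}\mid\cF_t)=g(A^o_t)\to\ind_{\{\text{survival}\}}$ a.s. Since $\phi(\lambda,\delta)>0$, there are $t$ and a finite $S$ with $\bP(A^o_t=S)>0$ and $g(S)>1-\epsilon^2/2$. Each point of $S$ is reached from $o$ by finitely many positive-rate jumps, so $S\subset C_n$ for $n$ large. Monotonicity in the initial set then gives \eqref{eq:prop1.1_2} for $A=C_n$.

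This is the missing idea behind your ``main obstacle''. Passing to the lattice generated by the support would not help, since the set reachable from $o$ is only a semigroup; think of a one-sided kernel.

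A smaller point concerns the exact value $1-\epsilon$ in \eqref{eq:Prop1.2}. The map $L\mapsto\bP(|\leftidx{_{L}}{A}{_T^A}|>N)$ jumps, since $L$ is integer-valued, so you cannot hit $1-\epsilon$ by varying $L$. Instead fix $L_j$ and vary $T$. Use continuity in $T$, together with the fact that the process confined to the finite box $B_{L_j}$ dies out, so this probability tends to $0$ as $T\to\infty$.
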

    
     In the case where $\lambda \in \Lambda$ is \emph{irreducible}, that is, the set $\{ y \in \bZ^d \colon \lambda_{o,y} > 0 \}  \text{ spans }\bZ^d $, 
 this follows as in the proofs in \cite{LiggettSIS1999} after minor modifications only. For the more general case covered here we in addition apply arguments as in \cite[Lemma 3.14]{bezuidenhout1994critical}. See Subsection \ref{sec:proofProp1.1} for the details.

The next proposition, inspired by \cite[Section 7]{grimmett2002directed}, is a modification of  \cite[Proposition I.2.8]{LiggettSIS1999}. 
To state it precisely, for any $L,T\in [0,\infty)$ and $A \subset B_L$, denote by $I_{L,T}^A \subset B_{L,T}$ the \emph{infected region} obtained by using the graphical representation within the space-time region $B_{L,T}$ only, that is,
\begin{equation}\label{eq:infectedSet}
I_{L,T}^A  \coloneqq \{(x,s)\in B_{L,T} \colon (x,s) \in  \leftidx{_{L}}A_s^A  \}. 
\end{equation}
Further, consider the random variable $E_{L,T,R}^{A} = E_{L,T,R}^{A}(I_{L,T}^A)$ that outputs the mean (or expected) number of infection arrows from $I_{L,T}^A$ to the space-time region  $S_{L,T,R}$.   That is,
\begin{equation}\label{eq:meanNumbInfect}
    E_{L,T,R}^{A} \coloneqq \sum_{x \in B_L} \sum_{y \in S_{L,T,R}} \lambda_{x,y} \int_0^T \ind_{\{(x,s) \in I_{L,T}^A\}} ds. 
\end{equation}
 For the case $R=\infty$ we  simply write $E_{L,T}^{A}$. 
 Note that, by definition, $E_{L,T,R}^{A}$ is a function of $I_{L,T}^A$ and hence measurable with respect to $\cF_{L,T} \coloneqq \cF_{B_{L,T}}$. 

\begin{proposition}\label{Prop2}
Let $\lambda \in \Lambda$ and $\delta>0$. Then, for any two increasing sequences $(L_j)$ and $(T_j )$, both diverging to infinity, and for any $M, N > 0$ and $A \subset \bZ^d$ finite, 
 \begin{equation}\label{eq:Prop2_key}
       {\lim\sup}_{j \rightarrow \infty} \bP_{\lambda,\delta}(E_{L_j, T_j}^{A}  \leq M) \bP_{\lambda,\delta}(|\leftidx{_{L_j}}A^{A}_{T_j}| \leq N) \leq \bP_{\lambda,\delta} (\exists s \colon A_s^A = \emptyset).
   \end{equation}
   \end{proposition}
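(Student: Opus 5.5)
This is the long-range version of \cite[Proposition I.2.8]{LiggettSIS1999}, and I would follow the same strategy: show that on the event $\{E_{L_j,T_j}^A\le M,\ |\leftidx{_{L_j}}A^A_{T_j}|\le N\}$ there is, conditionally on $\cF_{L_j,T_j}$, a probability bounded below uniformly in $j$ that the process started from $A$ dies out. That probability will be $e^{-M}\,c_N$, with $c_N>0$ depending only on $N$ and $\delta$. For the moment ignore the interplay between the two events and write $G_j\coloneqq\{E_{L_j,T_j}^A\le M\}\cap\{|\leftidx{_{L_j}}A^A_{T_j}|\le N\}$, which is $\cF_{L_j,T_j}$-measurable.

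The construction has three steps.

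\begin{enumerate}
\item[(i)] No infection arrow emanating from the infected region $I^A_{L_j,T_j}$ lands outside $B_{L_j}$ during $[0,T_j]$. Given $\cF_{L_j,T_j}$, the arrows from $B_{L_j}$ to $B_{L_j}^c$ are independent Poisson processes. By definition \eqref{eq:meanNumbInfect}, $E^A_{L_j,T_j}$ is exactly the mean number of such arrows emitted from infected space-time points. So this event has conditional probability $\exp(-E^A_{L_j,T_j})\ge e^{-M}$ on $G_j$. On it, $A^A_{s}=\leftidx{_{L_j}}A^A_{s}$ for all $s\le T_j$.
\item[(ii)] After time $T_j$, every site of $\leftidx{_{L_j}}A^A_{T_j}$ (at most $N$ of them) heals during $[T_j,T_j+1]$ before sending out any infection arrow. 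This has probability at least $c_N\coloneqq\bigl(\tfrac{\delta}{\delta+\lambda_\infty}(1-e^{-(\delta+\lambda_\infty)})\bigr)^N>0$, using \eqref{eq:lambda_infty}. It depends only on the Poisson processes in $\bZ^d\times(T_j,T_j+1]$, which are independent of $\cF_{\bZ^d\times[0,T_j]}$.
\item[(iii)] The event in (i) is measurable with respect to the graphical construction up to time $T_j$: outside arrows into $B_{L_j}^c$ in $[0,T_j]$ together with $\cF_{L_j,T_j}$. Conditioning on the latter and using independence of the disjoint Poisson families gives
\[
\bP_{\lambda,\delta}(\exists s\colon A^A_s=\emptyset)\ \ge\ e^{-M}c_N\,\bP_{\lambda,\delta}(G_j).
\]
\end{enumerate}

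The main obstacle is that this gives only $\bP(G_j)$, not the product of the two probabilities, and also carries the unwanted constant $e^{-M}c_N$. Liggett's trick removes both. Since $\{A^A_s\ne\emptyset\ \forall s\}$ has positive probability only if the process survives, one shows $\bP(G_j\cap\{\text{survival}\})\to0$, using $e^{-M}c_N\,\bP(G_j\cap \text{survival}\mid\ldots)$-type bounds together with the martingale convergence theorem. Concretely, $\bP(\text{extinction}\mid\cF_{\bZ^d\times[0,T_j]})\to\ind_{\text{extinction}}$ almost surely. On $G_j\cap(\text{i})$ this conditional probability is at least $c_N$, so along the limit $\limsup_j \bP(G_j\cap\{\text{survival}\})=0$. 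Event (i) would be handled by conditioning only on $\cF_{L_j,T_j}$: a second martingale argument with the lower bound $e^{-M}c_N$ on $G_j$ itself yields the same conclusion. Hence
\[
\limsup_j\bP(G_j)\le\bP(\text{extinction}).
\]

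The product form then follows from positive association. Both $\{E^A_{L_j,T_j}\le M\}$ and $\{|\leftidx{_{L_j}}A^A_{T_j}|\le N\}$ are decreasing events, since $E$ is increasing in the infected region. Therefore $\bP(E\le M)\bP(|\cdot|\le N)\le \bP(G_j)$, which gives \eqref{eq:Prop2_key}. The delicate point I expect to check carefully is the measurability and independence in step (iii). $E^A_{L_j,T_j}$ counts arrows to all of $B_{L_j}^c$ (the case $R=\infty$), so it relies on summability b) to be finite, and the absence of these arrows must indeed make the restricted and unrestricted processes agree up to $T_j$.
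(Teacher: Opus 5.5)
Your proposal is correct and follows essentially the same route as the paper. Both arguments rest on a uniform lower bound on the conditional extinction probability given $\cF_{L_j,T_j}$ on the relevant event (your $e^{-M}c_N$, the paper's $(e(1+\lambda_\infty))^{-k}$ in Lemma~\ref{lem:Prop2}), then martingale convergence along the increasing filtration $(\cF_{L_j,T_j})$ to get $\limsup_j \bP_{\lambda,\delta}(G_j)\le \bP_{\lambda,\delta}(\exists s\colon A_s^A=\emptyset)$, and finally positive association for the two decreasing events.

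Only your second martingale argument is needed. The first, run on $G_j$ intersected with the $j$-dependent event (i), does not by itself control $G_j\cap\{\text{survival}\}$. Your healing-first factor $\delta/(\delta+\lambda_\infty)$ is the correct one; the paper writes $1/(1+\lambda_\infty)$, which is right only when $\delta=1$.
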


Note that \cite[Proposition I.2.8]{LiggettSIS1999} gives the exact same statement as Proposition \ref{Prop2}, but with $E_{L_j, T_j}^{A}$ replaced by the  maximal number of  infected points on the boundary of $B_{L_j,T_j}$ and satisfying certain additional properties. 
 It is this modification, together with the tail regularity assumption,  both first proposed in \cite[Section 7]{grimmett2002directed} for a related long-range oriented percolation model, 
 that enable us to treat the case of  long-range interactions in the symmetric setting. 
  
 The  proof of Proposition \ref{Prop2} follows the same lines of reasoning as that of  \cite[Proposition I.2.8]{LiggettSIS1999} and is presented in Subsection \ref{sec:proofProp2}. There we also present the proof of the following inequality. 
 
\begin{proposition}\label{prop:tailbound}
Assume that $\lambda \in \Lambda$ satisfies  $\lambda_{o,y} \simeq \|y\|^{-\alpha}$ for some $\alpha>d$. Then there are $r \in (1,\infty)$, $\xi \in (0,1)$ and $L^*\in \bN$ so that, for any $L>L^*$, $T>0$  and $A \subset B_L$,
    \begin{equation}
        E_{L,T}^{A}\leq \frac{1}{1-\xi} E_{L,T,2r L}^{A}.
    \end{equation}
\end{proposition}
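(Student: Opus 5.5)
The plan is to bound the contribution of each infected space-time point separately, using only the tail assumption $\lambda_{o,y}\simeq\|y\|^{-\alpha}$. Since $E_{L,T}^{A}$ and $E_{L,T,2rL}^{A}$ are both integrals over $s\in[0,T]$ of sums over infected sites $x\in B_L$, it suffices to prove a pointwise inequality: for every $x\in B_L$,
\begin{equation}
\sum_{y\notin B_{L}} \lambda_{o,y-x} \leq \frac{1}{1-\xi}\sum_{y\in B_{L+2rL}\setminus B_L} \lambda_{o,y-x}.
\end{equation}
Equivalently, we show that the far part $\sum_{y\notin B_{L+2rL}}\lambda_{o,y-x}$ is at most $\xi$ times the full outgoing sum $\sum_{y\notin B_L}\lambda_{o,y-x}$. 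Multiplying the pointwise inequality by $\ind_{\{(x,s)\in I^A_{L,T}\}}$, integrating in $s$ and summing over $x$ then gives the claim. The set $A$ and the time horizon $T$ play no role.

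For the upper bound on the far part, fix $x\in B_L$. Every $y\notin B_{(1+2r)L}$ has some coordinate of modulus larger than $(1+2r)L$, so $\|y-x\|\ge\|y-x\|_\infty>2rL$. Provided $2rL\ge R$, the upper half of the $\simeq$ assumption gives
\begin{equation}
\sum_{y\notin B_{(1+2r)L}}\lambda_{o,y-x}\le \beta\sum_{\|z\|>2rL}\|z\|^{-\alpha}\le C_1\beta\,(2rL)^{d-\alpha}.
\end{equation}
Here $C_1$ depends only on $d$ and $\alpha$, and the last step uses $\alpha>d$ and the fact that the number of $z$ with $\|z\|=n$ is of order $n^{d-1}$.

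For the lower bound on the near part, we need $\sum_{y\in B_{(1+2r)L}\setminus B_L}\lambda_{o,y-x}\ge c\,L^{d-\alpha}$ uniformly in $x\in B_L$. This is the main obstacle, since $x$ may sit near the boundary of $B_L$ or deep inside it. The worst case is $x$ near the centre, where every target $y$ is at distance at least of order $L$.
\begin{itemize}
\item Fix an explicit region: $y$ with $L<y_1\le 2L$ and $|y_i|\le L$ for $i\ge2$. This region lies in $B_{(1+2r)L}\setminus B_L$ as soon as $r\ge 1/2$.
\item It contains of order $L^d$ points.
\item For each such $y$, $\|y-x\|\le 3dL$, and $\|y-x\|\ge1$.
\end{itemize}
Once $L\ge R$, the points $y$ in this region with $\|y-x\|\ge R$ still number of order $L^d$, because only $O(R^d)$ points are excluded. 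The lower half of the $\simeq$ assumption therefore gives
\begin{equation}
\sum_{y\in B_{(1+2r)L}\setminus B_L}\lambda_{o,y-x}\ge c_2\,\beta^{-1}L^d(3dL)^{-\alpha}=c\,L^{d-\alpha},
\end{equation}
for all $L>L^*$, where $L^*$ depends on $R$ and $d$. Symmetry of $\lambda$ is not needed for this step.

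To conclude, the ratio of the far part to the near part is at most $C_1\beta^2(3d)^{\alpha}c_2^{-1}(2r)^{d-\alpha}$. Since $\alpha>d$, choosing $r>1$ large makes this ratio at most some $\xi'<1$. Let $\mathrm{far}$ and $\mathrm{near}$ denote the two sums above, and set $\xi=\xi'/(1+\xi')$. Then $\mathrm{far}\le\xi'\,\mathrm{near}$ gives $\mathrm{far}\le\xi\,(\mathrm{far}+\mathrm{near})$, which is the desired pointwise bound. Finally we take $L^*\ge R$, large enough that the lower-bound step holds.
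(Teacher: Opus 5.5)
Your proof is correct, but its key comparison differs from the paper's.

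\textbf{The paper's route.} It first isolates a tail-ratio inequality for $\lambda_{o,\cdot}$ alone: $\sum_{\|y\|>rL}\lambda_{o,y}\le\xi\sum_{\|y\|>L}\lambda_{o,y}$ for $L>L^*$. This comes from the bounds $C_1(rL)^{d-\alpha}$ above and $C_2L^{d-\alpha}$ below. It then bounds the far part at each $x$ by $\xi$ times the \emph{whole} outgoing rate $\sum_{y\notin B_L}\lambda_{x,y}$, and rearranges.

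\textbf{Your route.} You lower-bound the near annulus $B_{(1+2r)L}\setminus B_L$ directly, uniformly in $x\in B_L$, through the explicit slab $\{L<y_1\le 2L,\ |y_i|\le L\}$, and compare the far part with that.

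\textbf{What your route buys.}
\begin{enumerate}
\item It makes the uniformity in $x$ transparent.
\item It avoids a delicate step in the paper's chain, namely $\sum_{\|y-x\|\ge 2L}\lambda_{x,y}\le\sum_{y\notin B_L}\lambda_{x,y}$. For $d\ge 2$ this is not an inclusion of index sets: an $\ell_1$-distance $2L$ does not force $y$ out of the $\ell_\infty$-box $B_L$. The step really needs scale $2dL$, i.e.\ $r$ enlarged by a factor $d$.
\item It shows that $r$ need not be large. Your own final conversion $\xi=\xi'/(1+\xi')$ works for \emph{any} finite $\xi'$, so every fixed $r>1$ works once $L$ is large. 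Insisting on $\xi'<1$ is unnecessary.
\end{enumerate}

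\textbf{What the paper's route buys.} Generality: it uses the power-law hypothesis only through the tail-ratio inequality. After the $2dL$ correction, the proposition therefore holds for any translation-invariant $\lambda$ satisfying that inequality, which is the weakening advertised in the Discussion. Your slab estimate, by contrast, needs the pointwise lower bound $\lambda_{o,y}\ge\beta^{-1}\|y\|^{-\alpha}$ in a fixed direction.
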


We are now set for the proof of Theorem \ref{thm:fstc} for which we will combine the just stated propositions together with basic properties of the LRCP as presented in Section \ref{sec:preliminiaries}.

\begin{proof}[Proof of Theorem \ref{thm:fstc}]
Let $\lambda \in \Lambda$ be symmetric and satisfy  $\lambda_{o,y} \simeq \|y\|^{-\alpha}$ for some $\alpha>d$, and let $\delta>0$ for which $\phi(\lambda,\delta)>0$. Fix  $r>1$, $\xi \in (0,1)$ and $L^* \in \bN$ such that the statement of Proposition \ref{prop:tailbound} holds.  Further, fix $\epsilon>0$, and consider $\tilde{\epsilon}>0$ which will be a function of $\epsilon$ that we specify at the end of the proof.
\begin{enumerate}
\item Fix $A\subset \bZ^d$, finite  and symmetric and so that Proposition \ref{prop1.1} holds with $\epsilon$ replaced by $\tilde{\epsilon}$. 
Further, set $\gamma \coloneqq \bP_{\lambda,\delta}( \leftidx{_{A }}|A_1^o \supset A)>0$. 

\item Let $N' = N'(\gamma)$ satisfy $(1-\gamma)^{N'} \leq \tilde{\epsilon}$. Further, choose $N=N(n,N')$ so large that any $\Delta \subset \bZ^{d}$ with $|\Delta|=N$ 
contains a subset of size $N'$ where all points are at distance at least $2n+1$, with  $n= \min (m \in \bN \colon A\subset B_m)$.  The reason for the choice of parameters here will be apparent in Step 5 below.

\item Choose $K=K(N,n)$ so large that any $\Delta \subset \bZ^{d+1}$ with $|\Delta|=K$ 
contains a subset of size $N' d2^{d}$ where all points are at distance at least $2n+2$. By possibly taking  $K$ even larger, we also assume that $\exp(-2\sqrt{K}) \leq \tilde{\epsilon}/2$ and that $4 \leq K^{1/4}$.
The reason for the choice of parameter here will be apparent in Step 6  below. 

\item Fix a sequence $(L_j,T_j)_{j \geq 1}$ with $\lim_{j \rightarrow \infty} L_j=\infty$ and $ \lim_{j \rightarrow \infty} T_j=\infty$, possibly depending on $N, K$ and $A$, satisfying  for each $j\geq 1$,
\begin{equation} L_jT_j \geq 4K, \quad L_j \geq \max(n, L^*), \quad T_j>1,\end{equation} and also that
\begin{equation}\label{eq:keyEstimate}
\bP_{\lambda,\delta} \left( \left| \leftidx{_{L_j}}A_{T_j}^{A} \right| >  2^d N\right) = 1-\tilde{\epsilon}.
\end{equation}
That this is possible is immediate  from Proposition \ref{prop1.1}. 
The reason why we require an equality in \eqref{eq:keyEstimate} will be apparent in Step 6 below.

\item Utilising the properties obtained in the previous Steps, we argue next that,  for any $j\in \bN$, 
\begin{align}\label{eq:block1}
 \bP_{\lambda,\delta}\left(\exists x\in B_{L_j}^+  \colon A(x) \subset \leftidx{_{{L_j}+2n}}{A}{_{T_j+1}^{A}}  \right)
 \geq (1-\tilde{\epsilon}^{2^{-d}})(1-\tilde{\epsilon}).
\end{align}
Indeed, for $j \in \bN$ fixed, since the LRCP is positively associated and the infection parameter $\lambda$ is translation invariant and symmetric, it follows by \eqref{eq:keyEstimate} and exactly the same argument as that for \cite[Proposition I.2.6]{LiggettSIS1999} that 
\begin{equation}\label{eq:block1.1}
\bP_{\lambda,\delta} \left(\left| \leftidx{_{L_j+2n}} A_{T_j}^{A} \cap B_{L_j}^+ \right| >  N\right) \geq 1 -\tilde{\epsilon}^{2^{-d}},
\end{equation}
using also that $A$ is symmetric.  In this event, by how we fixed $N$ in Step 2, there are at least $N'$ points within $_{L_j+2n} A_{T_j}^{A} \cap B_{{L_j}}^+$, all of which are at distance at least $2n+1$ apart.  
Further, by the independence of the Poisson processes within disjoint space-time regions in the graphical construction, for any set of points $x \in \bZ^d$ which are all at least distance $2n+1$ apart (with respect to the sup-norm), the events $\{A(x) \subset \leftidx{_{A(x)}} \eta_{1}^{x} \}$ are all mutually independent. Moreover, by translation invariance, they all have equal probability of at least $\gamma$ to occur. Now, recall that  by our choice of $N'$ in Step 2 we have that $(1-\gamma)^{N'} \leq \tilde{\epsilon}$. Therefore, again by the independence govern by the graphical representation and translation invariance, on the event within the probability in \eqref{eq:block1.1}, 
we conclude that the event
\begin{align}
 \left\{ \exists x\in B_{L_j}^+  \colon A(x) \subset \leftidx{_{L_j+2n}}{A}{_{T_j+1}^{A}} \right\}
 \end{align}
 occurs with probability at least $1-\tilde{\epsilon}$ and  from  which, when combined with \eqref{eq:block1.1}, we conclude \eqref{eq:block1}.

\item The following is the key step of the argument to cover the case of long-range interactions and is where we apply Propositions \ref{Prop2} and \ref{prop:tailbound}. 
Particularly, for $L,T,R \in [0,\infty)$,  let $M_{L,T,R}^A$ denote the maximal number of points within $S_{L,T,R}$ to which there is an infection arrow from a space-time point in $I_{L,T}^A$ and  complying with the following: any two distinct points  $(x,s)$ and $(y,t)$ contained in $M_{L,T,R}^{A}$  satisfies either that $|s-t|> 1$ or that $\|x-y\|_{\infty} \geq 2n+1$.

 We claim that there exists $L,T \in [0,\infty)$ contained in the sequence $(L_j,T_j)$ constructed in Step 4 such that 
\begin{equation}\label{eq:keyEstimate1111}
    \bP_{\lambda,\delta} \left( M_{L,T,2r L}^{A}> N' d2^d \right) > 1-\tilde{\epsilon}.
\end{equation}

 In the following we provide the detailed argument to why \eqref{eq:keyEstimate1111} holds.  Recall from \eqref{eq:meanNumbInfect} that  $E_{L,T}^{A}$ gives the 
 expected number of infection arrows from $I_{L,T}^{A}$ to the space-time region  $(\bZ^d \setminus B_L)\times [0,T]$, where $I_{L,T}^{A}$ is given by \eqref{eq:infectedSet}.
 Then, by our choice of $A \subset \bZ^d$ in Step 1 and our choice of $(L_j,T_j)$ in Step 4, \eqref{eq:Prop2_key} implies that
\begin{equation}
       {\lim\sup}_{j \rightarrow \infty} \bP_{\lambda,\delta} \left(E_{L_j, T_j}^{A}  \leq \frac{16}{1-\xi} K \lambda_{\infty} \right) \tilde{\epsilon} \leq \frac{\tilde{\epsilon}^2}{2},
\end{equation}
where $\lambda_{\infty}$ is as in \eqref{eq:lambda_infty}. 
Therefore, we have that 
\begin{equation}
    \bP_{\lambda,\delta} \left( E_{L,T}^{A}>\frac{16}{1-\xi} K \lambda_{\infty} \right) > 1-\tilde{\epsilon}/2
\end{equation}
for some $(L,T)$ within the sequence $(L_j,T_j)$, and so, by Proposition \ref{prop:tailbound}, we also have that
\begin{equation}\label{eq:claimBlokkis}
    \bP_{\lambda,\delta} \left( E_{L,T,2r L}^{A}> 16 K\lambda_{\infty} \right) > 1-\tilde{\epsilon}/2.
\end{equation}
Now, let $Y$ denote the  actual number of infection arrows from $I_{L,T}^{A}$  to $S_{L,T,2r L}$. Note that, given $I_{L,T}^{A}$ and therefore also $E_{L,T,2r L}^{A}$, the random variable $Y$ is Poisson distributed with mean $\mu_Y=E_{L,T,2r L}^{A}$.  
Further, for $x \in S_{L,T,2r L}$ and  $i \in \{1, \dots,\lfloor T \rfloor\}$, let  $Y_{x,i}$ denote the number of infection arrows from $I_{L,T}^{A}$  to the space time region $\left\{x\right\} \times \left[T \frac{(i-1)}{\lfloor T \rfloor}, T\frac{i}{ \lfloor T \rfloor}\right)$.
Then, given $I_{L,T}^{A}$, we have that
\begin{equation}
Y = \sum_{x \in B_{L+R}\setminus B_L} \sum _{i=1}^{\lfloor T \rfloor} Y_{x,i},
\end{equation}
where, moreover, the $Y_{x,i}$'s  are by construction themselves all independent $Poi(\mu_{x,i})$-random variables satisfying that 
\begin{equation}\label{eq:boundOnMu}
\mu_Y =  \sum_{x \in B_{L+R}\setminus B_L} \sum _{i=1}^{\lfloor T \rfloor}  \mu_{x,i} \quad \text{ and }  \quad \sup_{(x,i)} \mu_{x,i} \leq 2 \lambda_{\infty} 
\end{equation}
Here the constant $2$ in the latter term has simply  been chosen such that it is greater than $T/\lfloor T \rfloor$ for any $T>1$. 

We claim now that, conditioned on the event $\{E_{L,T,(1+2r)L}^{A}> 16 K \lambda_{\infty} \}$, the event
\begin{equation}\label{eq:claimBlokkis2}
\left\{ \sum_{\sum_{x \in B_{L+R}\setminus B_L}}\sum _{i=1}^{\lfloor T \rfloor} \ind_{\{Y_{x,i} \geq 1\}} \geq K \right\}
\end{equation}
occurs with probability at least $1-\tilde{\epsilon}/2$. 
To see this, partition the sets of points 
\begin{equation}
\{(x,i) \colon x  \in B_{(2r+1)L}\setminus B_L , i = 1,\dots, \lfloor T \rfloor \}
\end{equation}
 into sets $\{\Delta_j \colon j =1,\dots,l\}$ such that $\sum_{(x,i)\in \Delta_j} \mu_{x,i} \geq \ln(2)$ for each $j=1,\dots,l$.
Since \eqref{eq:boundOnMu} holds with $\mu_Y \geq 16 K\lambda_{\infty}$, we can do this in such a way that  $l\geq 4K$. Let, for $j =1,\dots, l$, 
\begin{equation} Z_j = \left\{\begin{array}{cc}1 & \text{ if } \sum_{(x,i)\in \Delta_j} Y_{x,i} \geq 1; \\0 & \text{ otherwise. }\end{array}\right.
\end{equation}
Note that, by construction, the random variables $(Z_j)$ are independent conditional on $I_{L,T}^A$. Moreover, for each $j=1,\dots,l$, we have that 
\begin{equation}
\bP \left(Z_j =1 \right) = \bP\left(\sum_{(x,i)\in \Delta_j} Y_{x,i} \geq 1\right) \geq 1/2,
\end{equation} where the inequality holds since $\sum_{(x,i)\in \Delta_j} Y_{x,i}$ is Poissonian with mean larger than $\ln(2)$.  Therefore, by Hoeffding's inequality \cite[Theorem 1]{Hoeffding1963}, we have that
\begin{equation}
\bP_{\lambda,\delta} \left( \sum_{i=1}^l  Z_i  \geq l/2- l^{3/4}  \right) \geq 1-\exp(-2 \sqrt{l} ).
\end{equation}
From this, since $l \geq 4K$, and by our assumptions on $K$ in Step 3, the inequality \eqref{eq:claimBlokkis2} follows. 

The claim \eqref{eq:keyEstimate1111} holds as a consequence of our choice of $K$ in Step 3 and the estimates in \eqref{eq:claimBlokkis} and \eqref{eq:claimBlokkis2}. Indeed, by combining these estimates it follows that the event \eqref{eq:claimBlokkis2} holds with a probability  of at least $1-\tilde{\epsilon}$. In the case of that event, by our choice of $K$ in Step 3, there necessarily is a  subset of the $Y_{x,i}$'s of size $N'd2^d$, all of which are strictly positive and such that the corresponding labelings are all at least of distance $2n+2$ apart. This implies that $M_{L,T,R}^m>N'd2^d$, and proves the claim. 

\item Similarly to the argument in Step 5, by further utilising the bound \eqref{eq:keyEstimate1111} obtained in Step 6, we argue that 
\begin{align}\label{eq:block2}
   \P_{\lambda,\delta}\left(\exists (x,t) \in S_{L,T,2r L}^{i,\pm} \colon A(x) \subset \leftidx{_{(1+2r)L}}{A}{_{t+1}^{A}}   \right)
 \geq (1-\tilde{\epsilon}^{2^{-d}/d})(1-\tilde{\epsilon}),
\end{align}
for each $i =1,\dots,d$. Indeed, since the LRCP is positively associated, and both $A$ and the infection parameter $\lambda$ are symmetric, it follows by
 \eqref{eq:keyEstimate1111} and exactly the same argument as that for \cite[Proposition I.2.11]{LiggettSIS1999}, also using that $\lambda$ is translation invariant, that 
\begin{equation}\label{eq:block2.1}
\bP_{\lambda,\delta} \left( M_{L,T,2r L}^{A}(S_{L,T,2r L}^{i,\pm}) > N' \right) \geq 1-\tilde{\epsilon}^{2^{-d}/d}
\end{equation}  
for each $i =1,\dots,d$. In the event that this happens for a particular $i=1,\dots,d$, in the same vein as we argued in Step 5, it follows that  
\begin{align}
 \bP_{\lambda,\delta}&\left( \exists (x,t)  \in S_{L,T,2r L}^{i,\pm} \colon A(x) \subset  \leftidx{_{2(1+r)L}}{A}{_{t+1}^{A}} \right) \geq 1-\tilde{\epsilon},
 \end{align}
  using again that the process is positively associated,  that  $L\geq n$, $(1-\epsilon)^{N'} \leq \tilde{\epsilon}$, translation invariance and that the Poisson process of the graphical representation within disjoint space-time regions are independent. 
The  bound \eqref{eq:block2} therefore follows from  this latter inequality and \eqref{eq:block2.1}.
\end{enumerate}

We now summarize the proof of the theorem which follows by the estimates  \eqref{eq:block1} and \eqref{eq:block2} above, and by appropriately tuning of the parameters introduced. 
Indeed, set $\tilde{\epsilon}$ such that $1-\epsilon \geq  (1-\tilde{\epsilon}^{2^{-d}/d})(1-\tilde{\epsilon})$ and $1-\epsilon \geq (1-\tilde{\epsilon}^{2^{-d}})(1-\tilde{\epsilon})$ and let  $A=A(\tilde{\epsilon}) \subset \bZ^d$ be as in Step 1 above. 
Further, fix $L',T' \in (0,\infty)$ as in Step 6 above so that \eqref{eq:keyEstimate1111} is satisfied. Then the inequalities \eqref{eq:block1} and \eqref{eq:block2}  holds with probability $1-\epsilon$ and parameters $A$, $L'$ and $T'$, from which the claim of Theorem \ref{thm:fstc} with parameters $A$, $L =L'$ and $T=T'+1$ directly follows.
\end{proof}

\subsection{Proof of Proposition \ref{prop1.1}}\label{sec:proofProp1.1} 

The first part of Proposition \ref{prop1.1} follows as in \cite[Lemma 3.14]{bezuidenhout1994critical}, as presented next.

\begin{lemma}\label{lem1:prop1}
 Let $\lambda \in \Lambda$  and $\delta>0$ with $\phi(\lambda,\delta)>0$. Then, for any  $\epsilon>0$ there exists a finite set $A\subset \bZ^d$ such that
\begin{align}
 &   \bP_{\lambda,\delta} \left( \leftidx{_{A}}A_1^o \supset A \right) >0;\label{eq3:lemma_prop1.1}
 \\ &\bP_{\lambda,\delta} \left( A_t^A \neq \emptyset \text{ for some } t>0 \right) > 1- \frac{\epsilon^2}{2}. \label{eq1:lemma_prop1.1}
    \end{align}
    If $\lambda$ is symmetric, then the set $A$ can be chosen to be symmetric too.
    \end{lemma}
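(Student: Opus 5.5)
The plan is the one of \cite[Lemma 3.14]{bezuidenhout1994critical}: take $A$ to be the set of sites reachable from the origin, and use survival to get a large such set. Concretely, let $G \coloneqq \{y \in \bZ^d \colon \bP_{\lambda,\delta}( \leftidx{_{\{o,y\}}}A_1^o \ni y)>0\}$. More usefully, let $\Gamma$ be the additive semigroup generated by $\{y \colon \lambda_{o,y}>0\}$; this is the set of points the origin can infect by some finite chain of arrows. The first step is to check that for any finite $F \subset \Gamma\cup\{o\}$ one has $\bP_{\lambda,\delta}(\leftidx{_{F}}A_1^o \supset F)>0$. To see this, fix for each $y\in F$ a finite chain $o=z_0,z_1,\dots,z_k=y$ with $\lambda_{z_{i},z_{i+1}}>0$. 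If we only demand that the whole path lies in $F$, we must enlarge $F$ by these chains. So we will always work with sets $F$ closed under a fixed choice of chains: ``path-closed'' sets. On such a set, the event that the prescribed arrows occur in the prescribed order during $[0,1]$, with no healing at any vertex of $F$ in $[0,1]$, has positive probability, since finitely many Poisson clocks are involved. This event forces $F \subset \leftidx{_{F}}A_1^o$, which gives \eqref{eq3:lemma_prop1.1} for any such $F$.

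The second step produces \eqref{eq1:lemma_prop1.1}. Since $\phi(\lambda,\delta)>0$ and $A_t^o \subset o+\Gamma\cup\{o\}$ for all $t$, the standard argument \cite[Prop.~I.2.1]{LiggettSIS1999} applies: by L\'evy's 0--1 law, $\bP(A_s^o\neq\emptyset\ \forall s\mid \cF_t)\to \ind\{\text{survival}\}$. On the other hand, if $|A_t^o|$ stayed bounded by $m$ infinitely often, extinction would occur with probability bounded below along those times. Hence $|A_t^o|\to\infty$ on survival. Next we use positive association in the form $\bP(A^B \text{ dies}) \le \bP(A^{B'} \text{ dies})$ for $B\supset B'$, together with $\bP(A^{B}\text{ survives}) \to 1$ for $|B|\to\infty$. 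The latter fact comes from the same 0--1 argument applied to $\bP(\text{extinction}\mid\cF_t)\ge \bP(A^{A_t^o}\text{ dies})$, uniformly over all $B$ of large size, using translation invariance. Together these give a finite $B \subset \Gamma\cup\{o\}$ whose survival probability exceeds $1-\epsilon^2/2$. Now set $A$ equal to the path-closure of $B$, a finite set. Monotonicity in the initial set preserves \eqref{eq1:lemma_prop1.1}, and Step 1 gives \eqref{eq3:lemma_prop1.1}.

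For the symmetric case we replace $A$ by the union of its images under the hyperoctahedral group, that is, sign changes and coordinate permutations. If $\lambda$ is symmetric, $\Gamma$ is invariant under this group, and the image of a chain is again a chain. So the symmetrized set is still path-closed and lies in $\Gamma\cup\{o\}$. It is still finite, and it is larger, so \eqref{eq1:lemma_prop1.1} persists by monotonicity, and Step 1 gives \eqref{eq3:lemma_prop1.1}.

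The main obstacle is the uniform statement ``$|B|$ large, $B\subset\Gamma\cup\{o\}$, implies survival with high probability''. In the non-irreducible case the points of $B$ may all lie in a lower-dimensional or sparse structure, so we cannot simply compare with a box. I would handle this as in \cite{bezuidenhout1994critical}. Choose $N$ points of $B$ that are pairwise far apart. Each of them survives with probability at least $\phi(\lambda,\delta)$ by translation invariance. Then use near-independence of the processes started from far-apart points up to a fixed time $t_0$, with summability of $\lambda$ controlling cross-interactions. This shows that some one of them gets many infected sites, after which we iterate, or we conclude via the martingale bound directly. If that route is delicate, the fallback is to avoid uniformity altogether. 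We take $B=A_t^o$ itself on the event $|A_t^o|$ large, and pick one deterministic finite realization $B$ with conditional extinction probability small. Such a $B$ exists because $\bE[\bP(\text{die}\mid \cF_t); |A_t^o|\ge m]\to 0$ along suitable $t,m$.
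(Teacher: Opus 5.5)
Your fallback is the paper's argument in direct form. The paper shows by contradiction, iterating the strong Markov property, that some finite $A$ with $\bP_{\lambda,\delta}(A\subset A_1^o)>0$ survives with probability exceeding $1-\epsilon^2/2$; it then enlarges $A$ to a finite set $C$ containing the connecting infection paths to get \eqref{eq3:lemma_prop1.1}, and symmetrizes, which is what your path-closure (done more cleanly via deterministic chains) and group-orbit steps achieve. Make the fallback the proof and drop the uniform-in-$|B|$ claim: its stated justification fails, because the L\'evy 0--1 argument only controls the random sets $A_t^o$, not every large $B$. No uniformity is needed, since $\bE_{\lambda,\delta}\bigl[\bP_{\lambda,\delta}(A_s^o=\emptyset \text{ for some } s\mid\cF_t);\,A_t^o\neq\emptyset\bigr]=\bP_{\lambda,\delta}(A_t^o\neq\emptyset,\ A_s^o=\emptyset\text{ for some } s>t)\to0$ and $\bP_{\lambda,\delta}(A_t^o\neq\emptyset)\geq\phi(\lambda,\delta)$ already give a realization $B$ of $A_t^o$ with $\bP_{\lambda,\delta}(A_s^B=\emptyset\text{ for some } s)<\epsilon^2/2$. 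Also choose the chains prefix-consistently, e.g.\ along a spanning tree, so that path-closures of finite sets stay finite.
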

    
    \begin{proof}
    This follows by the same argument as in the proof of \cite[Lemma 3.14]{bezuidenhout1994critical}, but for completeness we provide the details. 
      Firstly, we claim that there is a $A\subset \bZ^d$ finite such that
    \begin{equation}\label{eq2:lemma_prop1.1}
    \bP_{\lambda,\delta} \left(A \subset A_1^{o} \right)>0 
    \end{equation}
    and such that the inequality in \eqref{eq1:lemma_prop1.1} holds. Indeed, suppose not, then for each $A\subset \bZ^d$ finite satisfying \eqref{eq2:lemma_prop1.1} there is necessarily a finite time, say $\tau_A$, such that 
    $\bP_{\lambda,\delta}( A_{\tau_A}^A = \emptyset) \geq \epsilon^2/2$. Then, combining this with the strong Markov property of the LRCP iteratively a number of times, it follows that 
    \begin{equation}
    \bP_{\lambda,\delta}(A_t^{o} \neq \emptyset \text{ for some } t>0 ) =0, 
    \end{equation} 
    which contradicts the assumption that $\phi(\lambda,\delta)>0$. Thus, there necessarily exists a set $A\subset \bZ^d$ such that \eqref{eq1:lemma_prop1.1} and \eqref{eq2:lemma_prop1.1} holds. 
      Now, by continuity of probability measures, we therefore have that 
    \begin{equation}
    \bP_{\lambda,\delta} \left(A \subset \leftidx{_n}A_1^{o} \right) >0
    \end{equation}
for some $n \in \bN$.    Consequently, there must be a set $C \subset B_n$ such that 
       \begin{equation}
    \bP_{\lambda,\delta} \left(A \subset \leftidx{_{C}}A_1^{o} \right) >0
    \end{equation} and such that $\bP_{\lambda,\delta}  (x\in A_1^{o})>0$ for each $x\in C$ holds. By this we conclude \eqref{eq3:lemma_prop1.1} by possibly enlarging $A$ to equal $C$. Lastly, if $\lambda$ is symmetric, we may assume that $C$, and, hence, $A$ is symmetric too. This completes the proof.
    \end{proof}

The next lemma corresponds to  \cite[Proposition 2.2]{LiggettSIS1999}. 

\begin{lemma} 
\label{lem:prophelp}
Let $\lambda \in \Lambda$ and $\delta>0$. Then, for every finite $A \subset \bZ^d$ and every $N \geq 1$,
\begin{equation}
    \lim_{t \rightarrow \infty} \lim_{L \rightarrow \infty} \bP_{\lambda,\delta} \left(\left|  \leftidx{_L}A_t^A \right| \geq N\right) =  \bP_{\lambda,\delta} \left(\forall t>0, A_t^A \neq \emptyset \right)
\end{equation}
\end{lemma}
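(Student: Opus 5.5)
The plan is to follow the proof of \cite[Proposition I.2.2]{LiggettSIS1999} and treat the two limits separately. We first send $L\to\infty$ for fixed $t$, which identifies the inner limit as $\bP_{\lambda,\delta}(|A_t^A|\geq N)$. We then send $t\to\infty$ and show the result equals the survival probability.

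\emph{Inner limit.} Fix $t$ and $A$. The processes $\leftidx{_L}A_t^A$ increase in $L$, and their union over $L$ is $A_t^A$ almost surely. To see this, note that $A$ is finite and $\lambda_\infty<\infty$, so only finitely many infection events occur in the relevant cluster up to time $t$. Indeed, the set of sites reachable from $A\times\{0\}$ by time $t$ is almost surely finite: comparison with a branching random walk with birth rate $\lambda_\infty$ gives finite expected size, namely at most $|A|e^{\lambda_\infty t}$. Hence every active path from $A\times\{0\}$ to time $t$ stays in some $B_L$. Since $|A_t^A|$ is almost surely finite, $\{|\leftidx{_L}A_t^A|\geq N\}\uparrow\{|A_t^A|\geq N\}$. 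By continuity of measure,
\begin{equation}
\lim_{L\to\infty}\bP_{\lambda,\delta}\bigl(|\leftidx{_L}A_t^A|\geq N\bigr)=\bP_{\lambda,\delta}\bigl(|A_t^A|\geq N\bigr).
\end{equation}

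\emph{Outer limit, upper bound.} We have $\{|A_t^A|\geq N\}\subset\{A_t^A\neq\emptyset\}$, and the latter events decrease to $\{\forall t,\ A_t^A\neq\emptyset\}$. So the $\limsup_{t\to\infty}$ of $\bP_{\lambda,\delta}(|A_t^A|\geq N)$ is at most the survival probability.

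\emph{Outer limit, lower bound.} This is the main step. It suffices to show that
\begin{equation}
\bP_{\lambda,\delta}\bigl(A_t^A\neq\emptyset,\ |A_t^A|<N\ \text{for arbitrarily large } t\bigr)=0,
\end{equation}
since then $\liminf_t \bP(|A_t^A|\geq N)\geq \bP(\text{survival})$ by Fatou applied to the indicators. There is a constant $c=c(N,\delta)>0$ such that any nonempty set $B$ with $|B|<N$ dies out in the time interval $[0,1]$ with probability at least $c$. Indeed, each of the at most $N$ sites receives a healing event before any infection arrow leaves it, and no arrow hits $B$ from outside, which is automatic since sites outside are uninfected. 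Concretely, each site of $B$ heals in $[0,1]$ before emitting any arrow with probability at least $\tfrac{\delta}{\delta+\lambda_\infty}(1-e^{-(\delta+\lambda_\infty)})$. This step uses only summability, so it carries over from the nearest-neighbour case without change. The standard martingale (Lévy 0-1) argument then applies: by the Markov property, $\bP(\text{extinction}\mid\cF_t)\geq c\,\ind_{\{0<|A_t^A|<N\}}$. The left side converges almost surely to the indicator of extinction. So on the event that $0<|A_t^A|<N$ for arbitrarily large $t$, extinction occurs almost surely, which shows that the event above has probability zero.

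Combining the upper and lower bounds gives $\lim_{t\to\infty}\bP_{\lambda,\delta}(|A_t^A|\geq N)=\bP_{\lambda,\delta}(\forall t>0,\ A_t^A\neq\emptyset)$, completing the proof.

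The only genuinely long-range point is the almost sure finiteness of the cluster in finite time, used for the $L$-limit; this is where $\lambda_\infty<\infty$ enters. Everything else is verbatim from the finite-range proof.
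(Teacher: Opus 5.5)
Your proposal is correct and follows essentially the same route as the paper: continuity of measure for the $L$-limit, then a uniform lower bound on the extinction probability from nonempty configurations of size less than $N$, combined with Doob/L\'evy martingale convergence of $\bP_{\lambda,\delta}(\text{extinction}\mid\cF_t)$, to show that on survival $|A_t^A|$ eventually stays at least $N$. Your write-up is somewhat tidier than the paper's, since it gives the healing-before-outgoing-arrow bound with the correct $\delta$-dependence and spells out the upper bound and the Fatou step. The branching-process finiteness you invoke for the inner limit is not actually needed, because each active path visits only finitely many sites and so lies in some $B_L$.
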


\begin{proof}
    Firstly, by continuity of probability measures, we have that 
    \begin{equation}\label{eq:helpMeFirst}
        \lim_{L \rightarrow \infty} \bP_{\lambda,\delta}(| \leftidx{_L}A_t^A| \geq N) = \bP_{\lambda,\delta}(|A_t^A| \geq N).
    \end{equation}
    Next, we argue that 
    \begin{equation}\label{eq:helpMe}
         \bP_{\lambda,\delta}(A_t^A = \emptyset \text{ for some } t | \cF_s) \geq \left[ \frac{1}{1+\lambda_{\infty}|A_s^A|}\right]^{|A_s^A|}.
    \end{equation}
    Indeed, utilising the graphical construction, for any $A\subset \bZ^d$ finite, we have that 
\begin{align}
     &\bP_{\lambda,\delta} \left(A_t^A = \emptyset \text{ for some } t>0 \right) \\
    & \geq \bP_{\lambda,\delta} \left(\cap_{x \in A} \{ \min (s \in H_{x} \colon s\geq 0) \leq \min( s \in I_{y,x} \colon s\geq 0, y \neq x) \} \right)\\
    & = \prod_{x \in A}  \bP_{\lambda,\delta} \left(\min (s \in H_{x} \colon s\geq 0) \leq \min( s \in I_{y,x} \colon s\geq 0, y \neq x)  \right)\\
    & = \left[ \frac{1}{1+\lambda_{\infty}}\right]^{|A|}.
\end{align}
 Further, denoting by $X$ the indicator function
 \begin{equation} 
 X= \left\{\begin{array}{cc}1 & \text{ if }A_t^A = \emptyset \text{ for some } t>0; \\0 & \text{ otherwise, }\end{array}\right.
\end{equation}
 we have that 
\begin{equation}\label{eq:helpME2}
    \bP_{\lambda,\delta} (X=1 \mid \cF_s) \rightarrow X \quad a.s.\
\end{equation}
as $s\rightarrow \infty$ since $(\bE(X | \cF_s))$ forms a Doobs-martingale. Therefore, it holds that
\begin{equation}\label{eq:lastHelpMe}
    \bP_{\lambda,\delta}(\lim_{t \rightarrow \infty} |A_t^A| = \infty \mid  X=1 )=1.
\end{equation}
Indeed, if not, then there is an $N \in\bN$ such that $|A_t^A| \leq N$ occurs with positive probability for all $t$ large. However,  by  \eqref{eq:helpMe}, this implies that \begin{equation}
\bP_{\lambda,\delta}(A_t^A = \emptyset \text{ for some } t)>0,
\end{equation} which stands in contradiction to \eqref{eq:helpME2}. Lastly, from \eqref{eq:lastHelpMe} and using \eqref{eq:helpMeFirst}, we conclude the statement of the lemma since $\lim_{t \rightarrow \infty} \bP_{\lambda,\delta}( |A_t^A| \geq N) =\bP_{\lambda,\delta}(X=1)$.
\end{proof}

\begin{proof}[Proof of Proposition \ref{prop1.1}]
This follows by the previous lemma by an argument similar to the first part of the proof of Theorem 2.12 in \cite{LiggettSIS1999} (to the conclusion of Equation (2.16) therein). 
 Indeed, given $\epsilon>0$, by Lemma \ref{lem1:prop1}, there exists $A\subset \bZ^d$ finite and symmetric such that \eqref{eq:prop1.1_1} and \eqref{eq:prop1.1_2} hold, and where $A$ can be chosen symmetric if $\lambda$ is. 
Moreover, by Lemma \ref{lem:prophelp} and since $0 < 1-\epsilon < 1- \epsilon^2$, for all $t>0$ large we have that
\begin{equation}\label{eq:Prop1help1}
    \lim_{L \rightarrow \infty} \bP_{\lambda,\delta} \left( \left| \leftidx{_L}A_t^A \right|\geq N \right) > 1- \epsilon.
\end{equation}
Thus, by continuity of probability measures, for each such a $t>0$ we can find an $L$ such that also 
\begin{equation}\label{eq:Prop1help2}
\bP_{\lambda,\delta} \left(| \leftidx{_L}A_t^A|\geq N\right) > 1-\epsilon.
\end{equation}
The claim now follows by choosing an increasing sequence of $T_j$'s satisfying \eqref{eq:Prop1help1} and there-next choosing a corresponding sequence $L_j$, increasing in $j$, satisfying \eqref{eq:Prop1help2}. 
Note that we may here choose $(L_j,T_j)$ as large as we like. 
Further, for such a fixed sequence $(L_j,T_j)$, we may increase $T_j$ to obtain an equality in \eqref{eq:Prop1help2}, i.e., to guarantee that \eqref{eq:Prop1.2} holds. This is so since the function $f(t)= \bP_{\lambda,\delta} (|\leftidx{_L}A_{T_j+t}^A|\geq N)$ is continuous in $t$ (as e.g.\ follows by graphical representation of the LRCP, see  the discussion in \cite[Page 39]{LiggettSIS1999}) and since $f(t)$ converges to $0$ as $t\rightarrow \infty$. 
\end{proof}

We next show that for $\lambda \in \Lambda$ symmetric and irreducible, the set $A$ can be set to equal $B_n$ for some $n\in \bN$. 

\begin{proposition}\label{Prop3}
Let $\lambda \in \Lambda$ be symmetric and irreducible, and let $\delta>0$. Then there exists $N=N(\lambda)\geq 1$ such that, for all $n\geq N$, 
\begin{equation}\label{eq:Prop3}
\bP_{\lambda,\delta}\left( \leftidx{_{[0,n]^d}}A_1^o \supset [0,n]^d\right) >0.
\end{equation}
\end{proposition}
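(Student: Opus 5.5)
The event in \eqref{eq:Prop3} is that, using only the graphical representation restricted to $[0,n]^d\times[0,1]$, every site of the cube $[0,n]^d$ is infected at time $1$ starting from the origin. Since only finitely many sites and Poisson processes are involved, it suffices to exhibit one configuration of finitely many events in $[0,n]^d\times[0,1]$ that realises the event and has positive probability. The plan is therefore purely combinatorial. First we find a finite set of ``steps'' inside the cube that lets the infection reach every site of $[0,n]^d$ from $o$. Then we schedule these infection arrows in time and forbid healing events, which has positive probability.

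\emph{Step 1: positive steps span.} By irreducibility, $G:=\{y:\lambda_{o,y}>0\}$ generates $\bZ^d$ as a group. By symmetry, $G$ is invariant under sign changes and permutations of coordinates. Pick $y\in G$, $y\neq o$, with some coordinate $y_i\neq 0$. Apply sign changes so that all coordinates of $y$ are non-negative; call the result $y^+ \in G\cap[0,\infty)^d$.

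We aim to produce $e_1\in$ the additive semigroup generated by $G^+:=G\cap[0,\infty)^d$, or more modestly a finite set $F\subset G^+$ whose generated semigroup contains all points of $[0,n]^d$ once $n$ is large. Since $G$ generates $\bZ^d$, the vectors of $G$ with absolute values taken coordinatewise still generate a sublattice, and there are integer relations. The cleanest route is probably the following. Let $F$ be the finite set of coordinatewise absolute values of a finite generating subset of $G$, closed under permutations; then $F\subset G^+$. Let $S$ be the semigroup generated by $F$. We claim that the group generated by $S$ is $\bZ^d$. Indeed, any generator $g$ of $G$ can be written as $|g|$ minus twice the sum of those coordinate parts $|g_i|e_i$ with negative sign, and by a sign-flip argument these parts lie in $\mathrm{span}_\bZ F$. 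This claim is the delicate point and I expect it to be the main obstacle; a cruder fix, if needed, is to replace $\bZ^d$-spanning by the observation that $\mathrm{span}_\bZ S$ is sign-invariant and contains $g$ for each $g\in G$.

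Given that the group generated by the semigroup $S\subset[0,\infty)^d$ is $\bZ^d$, a standard numerical-semigroup (Frobenius-type) argument shows that $S$ contains all lattice points of a translated cone $z_0+[0,\infty)^d$. The difficulty is that we need \emph{every} point of $[0,n]^d$, including points near the origin, which cannot be reached from $o$ by non-negative steps unless $S$ is all of $\bN^d$. So we also use negative steps: every point $x\in[0,n]^d$ is reached by a path from $o$ that first moves out to a point in $z_0+[0,\infty)^d$ and then descends using sign-flipped steps, staying inside $[0,n]^d$ provided $n\ge N(\lambda)$ is large compared to $|z_0|$ and $\max_{f\in F}\|f\|$. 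Concretely, for each $x$ choose $m$ with $x+m(1,\dots,1)$ in the cone. Reach $x+m(1,\dots,1)$ from $o$ by $F$-steps in a coordinatewise increasing order, so the path stays in the box. Then reach $x$ from there by a path using steps of $-F\cup F$ whose total is $-m(1,\dots,1)$, chosen monotone so it stays in the box. This needs $m(1,\dots,1)$ to lie in $S$, which holds for large $m$.

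\emph{Step 2: scheduling.} Enumerate these finitely many paths (one per $x\in[0,n]^d$). Choose disjoint small time windows in $(0,1)$, ordered so that the arrows along each path fire in order. Require exactly one $I_{u,v}$ event in each designated window for each required arrow, and no healing events at any site of $[0,n]^d$ during $[0,1]$. These are finitely many independent Poisson constraints, each of positive probability, since each used arrow has rate $\lambda_{o,v-u}>0$. Under these constraints every $x\in[0,n]^d$ is infected at time $1$ via a path inside $[0,n]^d\times[0,1]$. This gives \eqref{eq:Prop3} for all $n\ge N(\lambda)$.
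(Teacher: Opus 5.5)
\textbf{Step 2 is fine; the gap is in Step 1.} Your Step 2 is essentially the paper's own argument. The paper combines single-target events via positive association instead of scheduling all arrows at once, but this difference is immaterial. Step 1, however, carries the whole content of the proposition (the paper imports it from Lemma 2.1 of B\"aumler's paper), and your construction of it fails in two places.

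\emph{First failure: the group claim.} The claim that the semigroup generated by $F\subseteq G\cap[0,\infty)^d$ generates $\bZ^d$ as a group is false. So is your fallback that $\mathrm{span}_{\bZ}S$ is sign-invariant. Take $d=2$ and $\lambda_{o,y}=\ind_{\{y\in G\}}$ with $G=\{(\pm1,\pm1),(\pm3,0),(0,\pm3)\}$. This $\lambda$ is symmetric and irreducible, since $(1,1)+(1,-1)-(3,0)=(-1,0)$. Yet $F\subseteq\{(1,1),(3,0),(0,3)\}$, whose span is $\{(a,b)\colon a\equiv b \bmod 3\}$, and this lattice does not contain $(1,-1)$. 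All your paths use steps in $F\cup(-F)$, so they never reach $(1,0)$, whatever $n$ is. Mixed-sign steps such as $(1,-1)$ cannot be discarded.

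\emph{Second failure: leaving the box.} Even when $\mathrm{span}_{\bZ}F=\bZ^d$, the cone of $F$ need not be the full orthant; take $G$ generated by sign changes and permutations of $(1,1)$ and $(1,2)$. Then $S\not\supset z_0+[0,\infty)^d$. Moreover, the route $o\to x+m(1,\dots,1)\to x$ leaves $[0,n]^d$ as soon as some $x_i=n$. For $x=(n,0,\dots,0)$ in that example, $x\notin S$ and $x+m(1,\dots,1)\notin[0,n]^d$ for every $m\ge1$, so no choice of $m$ works. Enlarging $n$ does not help, because $x$ ranges over the faces of the cube.

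\textbf{What is missing} is an argument that uses the full sign-flip invariance of $G$ to keep paths inside the box. For example:
\begin{enumerate}
\item Take a finite $G_0\subset G$, closed under sign changes and generating $\bZ^d$. Let $M=\max_{g\in G_0}\|g\|_\infty$, and write each $\pm e_i$ as a sum of at most $k$ elements of $G_0$; this is possible since $G_0=-G_0$.
\item From any $x\in[0,n]^d$, step by $\sigma g$, where $g\in G_0$ has $g_l\neq 0$ and the signs $\sigma_j$ make every coordinate move towards $n/2$. For $n\ge 2M$ these steps stay in $[0,n]^d$. They never push a coordinate lying within $M$ of $n/2$ further out than $M$, and they decrease $|x_l-n/2|$ by $|g_l|\ge1$ while it exceeds $M$.
\item Cycling through $l=1,\dots,d$, you reach the central cube $\{x\colon |x_j-n/2|\le M \text{ for all } j\}$ in finitely many steps.
\item Inside the central cube, join points by nearest-neighbour lattice paths, realising each unit move by the word for $\pm e_i$. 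Its excursions stay in $[0,n]^d$ once $n\ge 2(k+1)M$.
\end{enumerate}
Since $G=-G$, this yields a path inside the box from $o$ to every $x\in[0,n]^d$, and your Step 2 then completes the proof.
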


\begin{proof}[Proof of Proposition \ref{Prop3}]
This following by applying \cite[Lemma 2.1]{baumler2023continuity}, which gives a similar statement for a long-range percolation models. Therein two nodes $x,y\in \bZ^d$ with $x\neq y$ are connected by an open edge independently with probability $p_{x,y}=\beta J_{x,y}$ where $J_{x,y}$ is assumed to satisfy the Conditions a)-c) and f) and $\beta>0$.

We say that $o$ is connected to $y \in [0,N]^d$ if there exists a finite sequence $o=x_0,x_1,\dots, x_m = y$ so that $\lambda_{x_i,x_{i+1}}>0$ for all $i=0,\dots, m-1$. The vertices are connected within $\{0,\dots,m\}^d$ 
if all the vertices $x_0,\dots,x_m$ in the sequence are contained in this set. 
Then, by \cite[Lemma 2.1]{baumler2023continuity},  for all large $N$ and every $y \in [0,N]^d$, the vertex $o$ is connected to $y$  within $[0,N]^d$.  
Indeed, letting $p_{x,y}= \lambda_{x,y}/\lambda_{\infty}$,  \cite[Lemma 2.1]{baumler2023continuity} implies that such a sequence exists along which $p_{x_i,x_{i+1}}>0$ for $i=1,\dots,n$. 
Thus, there necessarily has to exist a path from $o$ to $x$ in $[0,N]^d$ along which $\lambda$ is positive.  

Now, note that, if $o$ and $y$ are connected within $[0,N]^d$, then with positive probability there exists an infection path in the graphical construction within the time window $[0,1]$ from $o$ to $y$ fully contained in $[0,N]^d$. 
Moreover, with positive probability there are no healing events in $[0,N]^d$  within the time window $[0,1]$. 
In particular, we have that, 
\begin{equation}\label{eq:BoundIrr}
    \bP_{\lambda,\delta} \left( \leftidx{_{[0,N]^d}}A_1^x \supset \{y\} \right) >0.
\end{equation}
From this, and since  the LRCP is positively associated, the conclusion of the proposition follows. 
\end{proof}

\subsection{Proofs of Proposition \ref{Prop2} and Proposition \ref{prop:tailbound}}\label{sec:proofProp2} 
  Before we provide the full proof of Proposition \ref{Prop2}, we give the following lemma, which corresponds to the first step of the proof of \cite[Proposition 2.8]{LiggettSIS1999}. For this, for $L,T>0$ and $A \subset B_L$,  recall the definition of $E_{L,T,R}^{A}$ (and $E_{L,T}^{A}$) from \eqref{eq:meanNumbInfect}, and consider the event
\begin{equation}
H_{L,T}^{A}(k) \coloneqq \left\{ E_{L,T}^A + | \leftidx{_L}A_T^A | \leq k \right\}, \quad k \in \bN. 
\end{equation}

\begin{lemma}\label{lem:Prop2}
Let $\lambda \in \Lambda$ and $\delta>0$. Then, for any $L,T>0$ and $A \subset B_L$, a.s.\ on $H_{L,T}^{A}(k)$, $k \in \bN$, it holds that
\begin{equation} 
\bP_{\lambda,\delta} \left( \exists s>0 \colon A_s^A = \emptyset  \mid \cF_{L,T} \right) \geq \left(e(1+\lambda_{\infty})\right)^{-k}.
\end{equation}
\end{lemma}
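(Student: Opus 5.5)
Given $\cF_{L,T}$, we exhibit an explicit event that kills the infection, built from Poisson processes outside $B_{L,T}$ and hence independent of $\cF_{L,T}$ given $I_{L,T}^A$. The infection can escape $B_{L,T}$ only in two ways. Either it survives at a site of $\leftidx{_L}A_T^A$ past time $T$, or it is carried by an arrow fired during $[0,T]$ from a point of $I_{L,T}^A$ to a site outside $B_L$. Here ``arrows landing inside $B_L$'' are already accounted for in $I_{L,T}^A$, which is the restricted process. We therefore consider the event $G$ on which both of the following hold:
\begin{itemize}
\item there are no infection arrows from $I_{L,T}^A$ to $(\bZ^d\setminus B_L)\times[0,T]$;
\item every $x\in \leftidx{_L}A_T^A$ heals after time $T$ before receiving or sending any infection arrow.
\end{itemize}
On $G$ the full process satisfies $A_t^A \cap B_L = \leftidx{_L}A_t^A$ for $t\le T$, no site outside $B_L$ is ever infected before $T$, and every site of $A_T^A=\leftidx{_L}A_T^A$ recovers without passing on the infection. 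Hence $A_s^A=\emptyset$ for some $s>0$.

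For the first condition: conditionally on $\cF_{L,T}$ (equivalently on $I_{L,T}^A$), the arrows from $I_{L,T}^A$ to sites outside $B_L$ during $[0,T]$ come from the processes $I_{x,y}$ with $y\notin B_L$. These are not in $\cF_{L,T}$, since $\cF_{L,T}$ only contains arrows with both endpoints in $B_L$; there is a subtlety about whether $\cF_{B_{L,T}}$ includes arrows leaving $B_L$, and we take the convention that it does not. The number of such arrows is then Poisson with mean exactly $E_{L,T}^A$ by \eqref{eq:meanNumbInfect}, so the probability that there are none equals $e^{-E_{L,T}^A}$.

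For the second condition: for each $x\in \leftidx{_L}A_T^A$, the first healing event of $H_x$ after $T$ must precede the first event of the superposition of all $I_{x,y}$ and $I_{y,x}$, $y\neq x$, after $T$. By the Poisson race this has probability at least $\delta/(\delta+2\lambda_\infty)$. After rescaling $\delta=1$, as in the proof of Lemma~\ref{lem:prophelp}, and counting only outgoing arrows (incoming arrows onto an already infected site are harmless), this becomes $1/(1+\lambda_\infty)$. Incoming arrows onto a site that has just healed could re-infect it, but only from other infected sites, whose outgoing arrows are excluded. These events involve only post-$T$ Poisson processes and are independent across distinct $x$, and independent of the first condition, which uses pre-$T$ arrows.

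Multiplying the two bounds gives
\[
\bP_{\lambda,\delta}(\exists s: A_s^A=\emptyset\mid \cF_{L,T}) \ge e^{-E_{L,T}^A}(1+\lambda_\infty)^{-|\leftidx{_L}A_T^A|} \ge \bigl(e(1+\lambda_\infty)\bigr)^{-k}
\]
on $H_{L,T}^A(k)$, since $E_{L,T}^A+|\leftidx{_L}A_T^A|\le k$ and each factor is at least $(e(1+\lambda_\infty))^{-1}$ per unit.

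The main obstacle is making rigorous the conditional independence: given $\cF_{L,T}$, the arrows leaving $B_L$ from $I_{L,T}^A$ form a Poisson process whose intensity is the $\cF_{L,T}$-measurable quantity $\sum_{y\notin B_L}\lambda_{x,y}\ind_{\{(x,s)\in I_{L,T}^A\}}$. We handle this by noting that $I_{L,T}^A$ is a function of the Poisson processes inside $B_{L,T}$ only, while the outgoing processes $(I_{x,y})_{x\in B_L,\,y\notin B_L}$ are independent of them. Thinning these processes by a set determined independently yields the stated Poisson law.
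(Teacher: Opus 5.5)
Your proposal is correct and is essentially the paper's argument. Conditionally on $\cF_{L,T}$, no arrow leaves $B_L$ from $I_{L,T}^A$ with probability $e^{-E_{L,T}^A}$, and independently each site of $\leftidx{_{L}}A_T^A$ heals before firing an outgoing arrow; you also make explicit two points the paper leaves implicit, namely the convention that $\cF_{L,T}$ excludes arrows leaving $B_L$, and the fact that only outgoing arrows matter, which is what makes the per-site events independent.

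One caveat, shared with the paper: the per-site race probability is $\delta/(\delta+\lambda_\infty)$, and rescaling time to $\delta=1$ replaces $\lambda_\infty$ by $\lambda_\infty/\delta$. So the argument really gives $\bigl(e(1+\lambda_\infty/\delta)\bigr)^{-k}$, which implies the stated constant only for $\delta\ge 1$. This is harmless for Proposition~\ref{Prop2}, where any positive bound depending only on $k$, $\lambda$ and $\delta$ suffices.
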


\begin{proof}
First note that with probability $1/(1+\lambda_{\infty})$ the next event occurring at a spatial location $x \in\leftidx{_L}A_T^A$ is a healing event. Therefore, since this happens independently for each such point, if $| \leftidx{_L}A_T^A | =l$ for some $l\in \bN$, then the conditional probability that no $x \in \leftidx{_L}A_T^A$ contributes to survival is at least $(1+\lambda_{\infty})^{-l}$. 
Further, since the number of infection arrows from $I_{L,T}^A$ is Poisson distributed with mean $E_{L,T}^A$, conditioned on $E_{L,T}^A = j \in [0,\infty)$, the probability that there are no infection arrows from any point in $I_{L,T}^A$ is simply $e^{-j}$. Combining these two observations yields the statement of the lemma.
\end{proof}

\begin{proof}[Proof of Proposition \ref{Prop2}]
With Lemma \ref{lem:Prop2} at hand, the claim of Proposition \ref{Prop2} follows almost exactly as in the second part of the proof of \cite[Proposition 2.8]{LiggettSIS1999}.  Firstly, by the martingale convergence theorem, it holds that
\begin{equation}
\lim_{j \rightarrow \infty} \bP_{\lambda,\delta} \left(  \exists s>0 \colon A_s^A = \emptyset  \mid \cF_{L_j,T_j} \right) = \ind_{\{\exists s>0 \colon A_s^A = \emptyset \}}, \quad a.s.
\end{equation}
By Lemma \ref{lem:Prop2}, for any $l,k \in \bN$, the probability $\bP_{\lambda,\delta}(  \exists s>0 \colon A_s^A = \emptyset  \mid \cF_{L_j,T_j}) $ is bounded below on 
$H_{L_j,T_j}^{A}(l+k)$ and therefore
\begin{equation}
\left\{ H_{L_j,T_j}^{A}(l+k) \text{ i.o. } \right\} \subset \left\{\exists s>0 \colon A_s^A = \emptyset \right\},
\end{equation}
which in turn implies that
\begin{equation}
\limsup_{j\rightarrow \infty} \bP_{\lambda,\delta} \left(H_{L_j,T_j}^{A}(l+k) \right) \leq \bP_{\lambda,\delta} \left(\exists s>0 \colon A_s^A = \emptyset\right).
\end{equation}
By this we conclude the proof by noting that, since the events $\{E_{L_j,T_j}^A \leq l\}$ and $\{  | \leftidx{_{L_j}}A_{T_j}^A | \leq k\}$ are decreasing, we have
that
\begin{align}
\bP_{\lambda,\delta}\left(H_{L_j,T_j}^{A}(l+k)\right) &\geq \bP_{\lambda,\delta}\left(E_{L_j,T_j}^A\leq l,  | \leftidx{_{L_j}}A_{T_j}^A | \leq k\right) \\ &
\geq   \bP_{\lambda,\delta}\left(E_{L_j,T_j}^A \leq l\right) \cdot \bP_{\lambda,\delta}\left(  | \leftidx{_{L_j}}A_{T_j}^A | \leq k\right),
\end{align}
using that the LRCP is positively associated.
\end{proof}

\begin{proof}[Proof of Proposition \ref{prop:tailbound}]
Assume that $\lambda \in \Lambda$ satisfies  $\lambda_{o,y} \simeq \|y\|^{-\alpha}$ for some $\alpha>d$. 
Then, for some $r > 1$ and $\xi \in (0,1)$ there is an $L^*\in \bN$ so that, for all $L>L^*$,
  \begin{equation}\label{eq:tail_b}
  \sum_{y \colon \|y\|>r L }\lambda_{o,y} \leq \xi \sum_{y \colon \|y\|> L}\lambda_{o,y}   \end{equation}
 Indeed, for some constants $C_1,C_2<\infty$ and for any $r>1$ and $L\in (0,\infty)$ so that $rL \geq R$, we have that
 $ \sum_{y \colon \|y\|>r L }\lambda_{o,y} 
\leq  C_1 (r L)^{-\alpha + d}$
and 
 $ \sum_{y \colon \|y\|> L }\lambda_{o,y} 
\geq  C_2 L^{-\alpha + d}.$ 
 Thus, by tuning $r$ large so that $\xi=\frac{C_1 r^{-\alpha+d}}{C_2}<1$, we have that \eqref{eq:tail_b} holds for any $L \in \bN$. 
 
 Now, fix $L>L^*$, $T>0$  and $A \subset B_L$. Then we have that
\begin{align}
    E_{L,T}^{A}&= \sum_{x \in B_L}\sum_{y \in \bZ^d \setminus B_L} \lambda_{x,y}
 \int_0^T \ind_{\{(x,s) \in I_{L,T}^A\}} ds
\\ & = \sum_{x \in B_L}  \int_0^T \ind_{\{(x,s) \in I_{L,T}^A\}} ds \left( \sum_{y \in B_{(1+2r)L}\setminus B_L} \lambda_{x,y} + \sum_{y \in \bZ^d \setminus B_{(1+2r)L}} \lambda_{x,y}\right)
\\ \leq & \sum_{x \in B_L}  \int_0^T \ind_{\{(x,s) \in I_{L,T}^A\}} ds \left( \sum_{y \in B_{(1+2r)L}\setminus B_L}  \lambda_{x,y} + \sum_{y \colon \|y-x\| \geq 2r L} \lambda_{x,y}\right)
\\ \leq & \sum_{x \in B_L}  \int_0^T \ind_{\{(x,s) \in I_{L,T}^A\}} ds \left(\sum_{y \in B_{(1+2r)L}\setminus B_L}  \lambda_{x,y} +  \xi \sum_{y \colon \|y-x\| \geq 2L} \lambda_{x,y}\right)
\\ \leq & \sum_{x \in B_L}  \int_0^T \ind_{\{(x,s) \in I_{L,T}^A\}} ds \left( \sum_{y \in B_{(1+2r)L}\setminus B_L}  \lambda_{x,y} +  \xi \sum_{y \in \bZ^d \setminus B_L} \lambda_{x,y}\right)
\\ = & E_{L,T,2r L}^{A} + \xi E_{L,T}^{A},
\end{align} 
where we applied \eqref{eq:tail_b} in the second inequality. Thus, we find that 
\begin{equation}
E_{L,T}^{A} \leq \frac{1}{1-\xi} E_{L,T,2r L}^{A}
\end{equation}
and we therefore conclude the proof since this bound holds irrespectively of $T$, $A$ and $L> L^*$. 
\end{proof}

   

  \section{The non-symmetric case}\label{sec:non-sym}
  
  In this section we present how the proofs of Theorem \ref{thm:fstc} and Theorem \ref{thm:truncation}a) can be modified to cover the non-symmetric case  as stated in Theorem \ref{thm:truncation}b). 
   For this purpose we need to extend the definitions  given in the previous section  to  tilted boxes. That is,  for $L=(L_1,\dots,L_d)\in [0,\infty)^d$, $T \in [0,\infty)$ and $\theta=(\theta_1,\dots,\theta_d) \in \bR^d$, and writing $x=(x_1,\dots,x_d)$, we let
\begin{align}
&B_{L,T}^{\theta} \coloneqq \left\{ (x,t) \in \bZ^d \times [0,T) \colon | x_i- t \theta_i| \leq L_i,  i=1,\dots,d\right\}. 
\end{align}
Further, for $R=(R_1,\dots,R_d) \in [0,\infty)^d$, we denote by 
\begin{equation}
T_{L,T}^{\theta} \coloneqq \left\{ x \in \bZ^d \colon (x,T) \in B_{L,T}^{\theta}\right\} \text{ and }  S_{L,T,R}^{\theta} \coloneqq B_{L+R,T}^{\theta} \setminus B_{L,T}^{\theta}
\end{equation} 
 the top of $B_{L,T}^{\theta}$ and the $R$-boundary of $B_{L,T}^{\theta}$, respectively. 

In the following statement, and in the proof of Theorem \ref{thm:truncation}b), without loss of generality  we  assume that the interaction parameter is strictly positive in the $d$-th coordinate direction, i.e.\ that $\lambda_{o,y}>0$ for some $y \in \bZ^d$ with $y \cdot e_d>0$. With this assumption it is sufficient to consider the following set
\begin{equation}
S_{L,T,R}^{\theta, \pm} \coloneqq \left\{(x,t) \in S_{L,T,R}^{\theta} \colon \pm x_d \in [L_d+\theta_d t, L_d+R_d +\theta_d t]\right\},
\end{equation}
i.e.\ where the restriction on $x=(x_1,\dots,x_d)$ only concerns the d-th coordinate direction.

\begin{theorem}[Finite space-time condition for non-symmetric interactions]\label{thm:fstcWS}
Let $\lambda \in \Lambda$  satisfy  $\lambda_{o,y} =\mathcal{O}( \|y\|^{-\alpha})$ for some $\alpha>2d+1$,
 and let $\delta>0$ so that $\phi(\lambda,\delta)>0$. Then there is $r>1$ so that the following holds: for every  $\epsilon > 0$ there are choices of $A \subset \bZ^d$ finite and $L \in (0,\infty)^d$, $T\in (0,\infty)$ and $\theta =(0,\dots,0,\theta_d) \in \bR^d$ such that 
\begin{align}
\label{C3}\tag{C3}
 & \bP_{\lambda,\delta} \left(\exists x\in B_{L,T}^{\theta}  \colon A(x) \subset \leftidx{_{B_{2L,T+1}^{\theta}}}{A}{_{T+1}^{A}} \right) \geq 1-\epsilon;
 \\ &    \label{C4}\tag{C4}
      \bP_{\lambda,\delta} \left( \exists (x,t)  \in S_{L,T,2r LT}^{\theta,A,\pm} \colon A(x) \subset  \leftidx{_{B_{2(1+r)LT,T+1}^{\theta}}}{A}{_{t+1}^{A}} \right) \geq 1-\epsilon.
 \end{align}
\end{theorem}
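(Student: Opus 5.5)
We follow the scheme of the proof of Theorem \ref{thm:fstc}, now in the spirit of \cite{bezuidenhout1994critical}. The boxes $B_{L_j}$ are replaced by tilted boxes $B^{\theta}_{L,T}$, whose drift $\theta_d$ follows the (possibly asymmetric) spread of the infection in the $d$-th direction. The tail condition \eqref{eq:tail_b} is replaced by an at most linear growth bound for the LRCP. That bound is supplied by the decay $\lambda_{o,y}=\mathcal{O}(\|y\|^{-\alpha})$ with $\alpha>2d+1$, via comparison with long-range first-passage percolation (the $\delta=0$ process, cf.\ \cite{chatterjee2016multiple}).

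\emph{Step 1 (seed and growth).} We take $A$ from Lemma \ref{lem1:prop1} and use Lemma \ref{lem:prophelp} as in Proposition \ref{prop1.1}. This gives sequences $(L_j,T_j)$ with $\bP_{\lambda,\delta}(|{}_{B^{\theta}_{L_j,T_j}}A^A_{T_j}|>N)=1-\tilde\epsilon$. The drift $\theta_d$ is chosen as in \cite[Section 3]{bezuidenhout1994critical}: a continuity/intermediate value argument in $\theta_d$ splits the surviving mass on the top and sides of the tilted box evenly between the $+$ and $-$ sides in the $d$-th direction. This even split stands in for the reflection symmetry used in Step 5 of Theorem \ref{thm:fstc}. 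The other coordinates need no tilt, since only the $d$-th direction is used for the steering in \eqref{C4}.

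\emph{Step 2 (linear growth replaces the tail bound).} The analogue of Proposition \ref{prop:tailbound} is an estimate of the following form: for $r$ large, with high probability the expected number $E^{\theta}$ of infection arrows leaving the tilted box satisfies $E^\theta\le C\,E^{\theta}_{2rLT}$, where $E^{\theta}_{2rLT}$ counts only the arrows landing in $S^{\theta}_{L,T,2rLT}$. To prove it we bound the arrows from $I^{A}_{L,T}$ that jump farther than $2rLT$. Their conditional mean is at most
\begin{equation}
|I^A_{L,T}|_{\mathrm{vol}}\sum_{\|y\|>2rLT}\lambda_{o,y}\lesssim (LT)^{d}\,T\,(rLT)^{d-\alpha}.
\end{equation}
Here $|I^A_{L,T}|_{\mathrm{vol}}\le |B_{L}|\,T$ by linear growth (Proposition \ref{prop:fstcWS1}), giving the factor $(LT)^{d}T$. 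When $\alpha>2d+1$ this tends to $0$ as $r\to\infty$, uniformly in $L,T$ large. So with high probability there are no such long arrows, and the long-range part of $E^\theta$ is negligible. The same linear growth bound shows that the spread stays inside $B^{\theta}_{2L,T+1}$ and $B^{\theta}_{2(1+r)LT,T+1}$. \emph{This step is the main obstacle}: the exponent $2d+1$ must come out exactly from combining the volume of the explored region with the tail sum. I would first try the crude union bound above, and only refine it if it fails.

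\emph{Step 3 (conclusion).} Proposition \ref{Prop2} transfers verbatim to tilted boxes, since Lemma \ref{lem:Prop2} only uses the conditional Poisson structure. Combined with Step 2, it forces $E^{\theta}_{2rLT}$ to be large with probability $>1-\tilde\epsilon/2$ for some $(L,T)$ in the sequence. The Poisson partition and Hoeffding argument of Step 6 of Theorem \ref{thm:fstc} then yields many well-separated infected points in $S^{\theta,\pm}_{L,T,2rLT}$. Splitting these by sign and using positive association, exactly as in \cite[Prop.\ I.2.11]{LiggettSIS1999} with the balancing from Step 1 replacing symmetry, gives many such points in each of $S^{\theta,+}$ and $S^{\theta,-}$. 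Likewise, many well-separated points lie on the top $T^{\theta}_{L,T}$. Finally, independence in disjoint regions together with $\gamma=\bP_{\lambda,\delta}({}_AA^o_1\supset A)>0$ upgrades one of these points to a full infected copy $A(x)$ within one extra time unit. This gives \eqref{C3} and \eqref{C4} after tuning $\tilde\epsilon$ as before.
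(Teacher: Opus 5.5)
\textbf{The gap is in Step 3.} There you pass from many well-separated hit points in the whole shell $S^{\theta}_{L,T,2rLT}$ to many points in each of $S^{\theta,+}_{L,T,2rLT}$ and $S^{\theta,-}_{L,T,2rLT}$ by ``splitting by sign''. For $d\geq 2$ the shell also contains the $2(d-1)$ lateral faces: points with $|x_i|>L_i$ for some $i<d$ whose $d$-th coordinate stays within the tilted range. These lie in neither $S^{\theta,+}$ nor $S^{\theta,-}$.

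The lower bound on $E^{\theta}$ from Proposition \ref{Prop2} counts arrows through all faces. A single tilt $\theta_d$ can only balance the two faces in the $d$-th direction. Without symmetry, the lateral faces may carry essentially all of the exit mass; think of $\lambda$ with a strong drift along $e_1$. In that case the positive-association square-root step gives nothing for \eqref{C4}.

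The missing device is the slab used in Step 6 of the paper, following \cite{bezuidenhout1994critical}. One works with boxes containing $\bZ^{d-1}\times[-L^{(1)}_d,L^{(1)}_d]\times[0,t_1]$, i.e.\ with infinite lateral widths. It would also suffice to take widths beyond the cone radius $\theta_0(n/2+T)$, so that by Proposition \ref{prop:fstcWS1} nothing exits laterally with high probability. Only then is the exit mass equal to $E^{\theta,A,+}+E^{\theta,A,-}$, so that the balanced two-sided split works. The lateral widths are made finite only at the very end.

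Two related points also need fixing.
\begin{enumerate}
\item[(i)] Proposition \ref{Prop2} is proved along a fixed nested sequence of boxes. Your $\theta_d$ comes from an intermediate value argument and depends on $(L,T)$, so the boxes are not nested and the martingale argument does not transfer verbatim. You need the version that is uniform over all boxes containing a fixed base slab (the paper's Step 6, cf.\ \cite[Proposition 3.11]{bezuidenhout1994critical}). It holds because, with $X$ the extinction indicator, $\|\bE[X\mid\cF_D]-X\|_2$ is non-increasing as $D$ grows.
\item[(ii)] Linear growth enters the tilt selection itself (the paper's Step 8). It gives the endpoint behaviour of the intermediate value argument and confines the selected tilt to $|\theta_d|\leq\theta_0$, so that the chosen box contains the base slab.
\end{enumerate}

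Your Step 2, by contrast, is sound and is a more explicit version of the paper's Step 11. The paper truncates by using that no arrow from the infected region can land outside the linear-growth cone. You instead bound the far part of the conditional mean by the cone's space-time volume (of order $T(L+T)^d$) times the tail sum; this is uniformly small for large $r$ once $\alpha\geq 2d+1$. Note that this volume factor must come from the cone and not from $|B_L|\,T$, precisely because the boxes have to be laterally infinite.
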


Theorem \ref{thm:fstcWS} is similar to Theorem \ref{thm:fstc}, but without the symmetry assumption, and can be seen as an infinite-range version of \cite[Lemma 4.1]{bezuidenhout1994critical} when restricting the latter to the contact process with finite range interaction. Its proof is presented in the next subsection. 

\begin{proof}[Proof of Theorem \ref{thm:truncation}b)]
With Theorem \ref{thm:fstcWS} at hand, the proof of Theorem \ref{thm:truncation}b) goes along the same lines as that for  Theorem \ref{thm:truncation}a). Here one should note that Theorem \ref{thm:fstcWS} provides the key step for the proof of \cite[Proposition 5.1]{bezuidenhout1994critical}, which gives a statement akin of Proposition \ref{prop:renormalization1} that we now recall:  
for any  $\epsilon > 0$, there exists $A \subset \bZ^d$, $L \in [0,\infty)^d$ and $T \in [0,\infty)$   such that 
\begin{align}
 &\bP_{\lambda,\delta} \left(\exists (y,t) \in R(L,T)  \colon A(y)   \subset \leftidx{_{B_{L,T}^{A}}}{A}{_{t}^{A}} \right)> (1-\epsilon);
\\ &\bP_{\lambda,\delta} \left(\exists (y,t) \in L(L,T)  \colon A(y)  \subset \leftidx{_{B_{L,T}^{A}}}{A}{_{t}^{A}} \right)> (1-\epsilon),
 \end{align}
 where 
 \begin{align}
 &R(L,T)\coloneqq \left\{ (x,t) \in B_{L,T} \colon (x_d,t) \in [L_d/3,L_d] \times [T/3,T] \right\};
 \\ &L(L,T)\coloneqq \left\{ (x,t) \in B_{L,T} \colon (x_d,t) \in [-L_d,-L_d/3] \times [T/3,T] \right\}.
 \end{align}
 This latter inequality follows by arguing as in the proof of Proposition \ref{prop:renormalization1}, using the strong Markov property of the LRCP and the bounds of Theorem \ref{thm:fstcWS} a finite number of times to steer the spread of the infection to the designated space-time region, in a similar way as for the proof of Proposition \ref{prop:renormalization1}. In fact, by Theorem \ref{thm:fstcWS}, in doing so we may restrict to  the LRCP truncated at level $(1+2r)LT$ to which the proof of \cite[Proposition 5.1]{bezuidenhout1994critical} directly applies.  Furthermore, with these bounds we can apply the same renormalization argument as for the proof of Theorem \ref{thm:truncation}a) above, see also \cite[Section 6]{bezuidenhout1994critical} and particularly the proof of Theorem 2.4 therein. Since at this stage there are essentially no new arguments needed in order to adapt these argument to the current setting, we here omit  the detailed proof.
\end{proof}

\subsection{Proof of Theorem \ref{thm:fstcWS}}

In this subsection we detail how the proof of Theorem \ref{thm:fstc} can be extended to yield Theorem \ref{thm:fstcWS}. For this we also utilise ideas from \cite{bezuidenhout1994critical}, who covered the finite range case in a setting without assuming that the infection parameter is symmetric.  
But first we need to extend some definitions to more general space-time regions of $ \bZ^d \times[0,\infty)$. 

More specifically, for $L=(L_1,\dots,L_d)\in [0,\infty)^d$, $T \in [0,\infty)$,  $\theta=(\theta_1,\dots,\theta_d) \in \bR^d$ and $A \subset \bZ^d$ finite, similar to \eqref{eq:infectedSet} we denote by $I_{L,T}^{\theta,A}$ the "infected region" obtained by using the graphical representation within the box $B_{L,T}^{\theta}$ when initialising the process with the set $A$ infected, and similar to \eqref{eq:meanNumbInfect} by $E_{L,T,R}^{\theta,A}$  the expected number of infection arrows from $I_{L,T}^{\theta,A}$ to the space-time region $S_{L,T,R}^{\theta}$. Moreover, we let $E_{L,T,R}^{\theta,A,\pm}$ denote the  
expected number of infection arrows from $I_{L,T}^{\theta, A}$ to the space-time regions $S_{L,T,R}^{\theta,\pm}$, respectively, and consider the events
\begin{equation}
H_{L,T,R}^{\theta,A}(k) \coloneqq \left\{ E_{L,T,R}^{\theta,A,+} +  E_{L,T,R}^{\theta,A,-} + \left| \leftidx{_{B_{L,T}^{\theta}}}A_T^A \right| \leq k \right\}, \quad k \in \bN. 
\end{equation}
As in the previous section, if $R_i=\infty$ in each coordinate direction $i=1,\dots,d$, we suppress it from the notation. 

In addition to Proposition \ref{prop1.1},  the proof of Theorem \ref{thm:fstcWS} also rely on the following statement, which is an extension of \cite[Lemma 3.18]{bezuidenhout1994critical} to the long-range setting and assures that the LRCP spreads at an at most linear speed. 

\begin{proposition}\label{prop:fstcWS1}
Let $\lambda \in \Lambda$  satisfy  $\lambda_{o,y} =\mathcal{O}( \|y\|^{-\alpha})$ for some $\alpha>2d+1$. Then, for each $\epsilon>0$, there exists 
$\theta_0 \in (0,\infty)$ and $N_0 \in \bN$ so that, for all $n\geq N_0$, and for every $A \subset B_n$ and $\delta\geq 0$, 
\begin{equation}\label{eq:AML}
    \bP_{\lambda,\delta} \left( A_t^{A} \subset B_{n\theta_0/2 + t \theta_0}(o) \text{ for all } t>0 \right) \geq 1- \epsilon. 
\end{equation}
\end{proposition}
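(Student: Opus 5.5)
By monotonicity it suffices to treat $\delta=0$. The healing events only remove active paths, so under the natural monotone coupling the process with $\delta>0$ is contained in the one with $\delta=0$. For $\delta=0$ we have $A_t^A=A_t^{B_n}$ at worst, and the LRCP reduces to a long-range first-passage percolation (Richardson-type) growth: each $x$ infected at time $t_x$ infects $y$ at time $t_x+\tau_{x,y}$, where the $\tau_{x,y}$ are independent $\mathrm{Exp}(\lambda_{o,y-x})$. The plan is a first-moment (path-counting) bound on space-time paths that travel too far too fast, in the spirit of the linear-growth estimates of \cite{chatterjee2016multiple} for long-range first-passage percolation.

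\textbf{Single-source bound.} Fix a start point $z\in B_n$ and discretise time into unit intervals $[k,k+1)$. For $x=A_t^z$ to leave $B_{n\theta_0/2+t\theta_0}$ at some $t\in[k,k+1)$, there must be a chain $z=x_0,x_1,\dots,x_m$ of infection arrows with increasing times in $[0,k+1]$ and $\|x_m-z\|\geq n\theta_0/2+k\theta_0 - n$. The expected number of such chains of length $m$ is at most
\[
\frac{(k+1)^m}{m!}\sum_{y_1,\dots,y_m}\prod_{i}\lambda_{o,y_i}\,\ind_{\{\|\sum_i y_i\|\geq D\}},
\]
where $D:=n\theta_0/2+k\theta_0-n$. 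If $\sum_i y_i$ is large, then some step satisfies $\|y_i\|\geq D/m$. So I split into two regimes:
\begin{enumerate}
\item[(i)] many steps, $m\geq c(k+1+D)$. The factor $(k+1)^m\lambda_\infty^m/m!$ is summable and exponentially small in $D$ once $\theta_0$ (hence $c$) is large, by the usual Poisson tail.
\item[(ii)] few steps, $m< c(k+1+D)$. Here I bound the sum by $m\,\lambda_\infty^{m-1}\sum_{\|y\|\geq D/m}\lambda_{o,y}\lesssim m\lambda_\infty^{m-1}(D/m)^{d-\alpha}$.
\end{enumerate}

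\textbf{The main obstacle.} Regime (ii) is where the hypothesis $\alpha>2d+1$ is needed, and this is the step I expect to be hardest. The naive combinatorial factor $(k+1)^m\lambda_\infty^m/m!$ with $m\asymp D$ is not small, so the bound above is too crude. Instead I will use a one-big-jump decomposition. Either the path makes a single jump of length $\geq \theta_0(k+1)/2+n\theta_0/4$, or all jumps are shorter than some $\ell$ and the path still covers distance $D$. I would treat the short-jump part by a truncated exponential-moment (Chernoff) bound on the number of arrows: it grows linearly with rate $\sum_y\|y\|\lambda_{o,y}<\infty$, which is finite since $\alpha>d+1$. The big-jump part I would handle by a union bound over the space-time source of the jump, i.e.\ over points that were infected before time $k+1$ and lie within linear distance. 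That set has expected size $O((n+\theta_0 k)^d)$, and by the previous step it is in fact of linear radius with high probability. Each such point emits a jump of length $\geq L$ during a unit interval with probability $O(L^{d-\alpha})$. This gives a contribution of order $(n+k)^{d}\,(n\theta_0+k\theta_0)^{d-\alpha}$.

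\textbf{Summing up.} For the contribution above to be summable in $k$ and then over the $|B_n|\asymp n^d$ starting points, with the total tending to $0$ as $n\to\infty$, I need the exponent $2d-\alpha+d\cdot 0<-1$, which amounts to $\alpha>2d+1$. Summing over $z\in B_n$ and over $k\geq0$ then yields the failure probability $\leq\epsilon$ for $n\geq N_0$ and $\theta_0$ large. Continuity in $t$ within each unit interval is already absorbed, since I used $k+1$ as the time bound.
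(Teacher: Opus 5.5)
Your reduction to $\delta=0$ and $A=B_n$, and the chain-counting bound with the factor $(k+1)^m/m!$, are fine. Regime (ii), however, which you rightly flag as the heart of the matter, has a genuine gap. The dichotomy ``either one jump of length at least $\theta_0(k+1)/2+n\theta_0/4$, or all jumps shorter than $\ell$'' is exhaustive only if $\ell$ is itself of linear size. Your Chernoff bound for the short-jump part gives linear spreading only for bounded $\ell$: it needs $\sum_{\|y\|<\ell}\lambda_{o,y}e^{s\|y\|}=O(1)$, which forces $s=O((\log\ell)/\ell)$, so the best it yields is a spreading speed of order $\ell/\log\ell$. Finiteness of $\sum_y\|y\|\lambda_{o,y}$ is not what governs the front of a first-passage process, which is a maximum over exponentially many paths rather than a single walk. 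For $2d<\alpha<2d+1$ this sum is finite, yet the growth is superlinear by \cite{chatterjee2016multiple}. What your decomposition leaves out is exactly the hard regime of many jumps of intermediate length. For the kernel $C\|y\|^{-\alpha}$ you reduce to, a linearly growing cluster emits, up to time $t$, of order $t^{d+1}\ell^{d-\alpha}$ jumps of length at least $\ell$. This is large for every $\ell\ll t^{(d+1)/(\alpha-d)}$, so such jumps do occur. Each one starts a seed that spreads on its own and emits further jumps, and one must show that these cascades add only a linear amount. Your exponent count, $\sum_k(n+k)^{2d-\alpha}<\infty$ iff $\alpha>2d+1$, only rules out a single jump of linear length.

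The big-jump step is also circular as written. You bound the number of possible sources by the size $O((n+\theta_0k)^d)$ of the set infected before time $k+1$, using that it has linear radius ``by the previous step'', but that is the statement being proved; path counting alone only gives $\bE|A_t^o|\leq e^{\lambda_\infty t}$ when $\delta=0$. Making this rigorous requires a multiscale induction: linear growth at smaller scales is used to control the seeds created by intermediate jumps at scale $t$, with error probabilities summable over scales. That is precisely the content of \cite[Proposition 8.2]{chatterjee2016multiple}. The paper imports it as a black box (failure probability at most $Cn^{-\kappa}$ on the time window $[n,4n]$, with $\kappa=(\alpha-2d-1)/2$) and then chains it over the geometric scales $q_k=(4c)^k$ to obtain a total error $O(N^{-\kappa})$. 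Finally, your bookkeeping over starting points is off. The factor $(n+k)^d$ already counts all sources in the ball, so an additional union bound over the $|B_n|\asymp n^d$ starting points would turn $n^{2d+1-\alpha}$ into $n^{3d+1-\alpha}$, which need not vanish when $\alpha>2d+1$. You should treat the initial set $B_n$ as a whole.
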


\begin{proof}
Firstly note that, by monotonicity, it is sufficient to consider the case when $\delta=0$ and $A=B_n$, and where $\lambda \in \Lambda$ is given by  $\lambda_{o,y} =C_{\lambda}\|y\|^{-\alpha}$ for some constant $C_{\lambda}<\infty$.  This corresponds to the first-passage percolation process studied in  \cite{chatterjee2016multiple}. 
The claim therefore follows as an application of \cite[Proposition 8.2]{chatterjee2016multiple}, as we argue next. 

Indeed, let $\kappa=(\alpha-(2d+1))/2>0$. Then, by \cite[Proposition 8.2]{chatterjee2016multiple}, there exists constants $C,c>0$ such that, for all $n \geq 1$,
\begin{align}
\bP_{\lambda,0} \left(  A_t^{B_n} \not  \subset B_{tc}(o) \text{ for all } t \in [n,4n] \right) \leq Cn^{-\kappa}.
\end{align}
Note that, since $\delta=0$, the process never heals and therefore the process $(A_t^{B_n})$ is non-decreasing in $t$. Consequently, we have that
\begin{equation}
\{ B_{cn}(o) \text{ for all } t < n\} \subset \{  A_t^{B_n} \not \subset B_{tc}(o) \text{ for all } t \in [n,4n]\}.
\end{equation} 
Therefore, by also using translation invariance and monotonicity of the process,  it follows that, for any $N \in \bN$ and with  $q_k=(4c)^k$,  
\begin{align}
    & \bP_{\lambda,\delta} \left( A_t^{B_N} \not \subset B_{cN+ 2c t}(o) \text{ for all } t>0 \right)  
    \\ \leq &\sum_{k\geq 0} \bP_{\lambda,0} \left( A_t^{B_{q_kN}} \not \subset B_{q_kN+ t c}(o) \text{ for all } t \in [0,q_{k+1}N]  \right)
    \\ \leq &C N^{-\kappa} \sum_{k\geq 0}  (4c)^{-\kappa k}.
\end{align}
 Observe that the latter term is finite and converges to $0$ as $N\rightarrow \infty$. Thus, there is an $N_0 \in \bN$ so that this probability is below $\epsilon$. From this we conclude the proof by setting $\theta_0=2c$. 
\end{proof}
  
  We are now ready to present the proof of Theorem \ref{thm:fstcWS}. 
  The proof follow the line of reasoning as in Theorem \ref{thm:fstc} closely with some crucial modifications at certain steps in order to deal with that $\lambda$ may not be symmetric.
  
\begin{proof}[Proof of Theorem \ref{thm:fstcWS}] 
Let $\lambda \in \Lambda$  satisfy  $\lambda_{o,y} = \mathcal{O}( \|y\|^{-\alpha})$ for some $\alpha>2d+1$, and let $\delta>0$ so that $\phi(\lambda,\delta)>0$.
Then, for $\epsilon>0$ fixed we let $\tilde{\epsilon}=\epsilon/11$. 
\begin{enumerate}
\item By Proposition \ref{prop:fstcWS1}, since $\alpha>2d+1$, we can fix  $\theta_0 \in [1,\infty)$ and $N_0\in \bN$ so that \eqref{eq:AML} holds with $\epsilon$ replaced by $\tilde{\epsilon}$.
\end{enumerate}

The next three steps of the proof are almost identical to those in the proof of Theorem \ref{thm:fstc}.
\begin{enumerate}
\item[2.] As in Step 1 of the proof of Theorem \ref{thm:fstc} we first fix $A \subset \bZ^d$ so that Proposition \ref{prop1.1} holds with  \eqref{eq:prop1.1_2} having the bound $1-\tilde{\epsilon}^2/2$. Note that  $A$ is now not necessarily a symmetric set. Moreover, we set $\gamma \coloneqq \bP_{\lambda,\delta}( \leftidx{_{A}}|A_1^o \supset A)>0$.

\item[3.] As in Step 2 of the proof of Theorem \ref{thm:fstc}, we let $N' = N'(\gamma)$ be such that $(1-\gamma)^{N'}<\tilde{\epsilon}$. Moreover, we specify $N=N(n,N')$ as therein, with $n=\min(m\geq N_0 \colon A \subset B_m)$, where $N_0 \in \bN$ is as in Step 1.

\item[4.] Similar to  Step 3 of the proof of Theorem \ref{thm:fstc}, we fix $K=K(N,n)$ so large that any $\Delta \subset \bZ^{d+1}$ with $|\Delta|=K$ 
contains a subset of size $2N'$ where all points are at distance at least $2n+2$. 

By possibly increasing $K$, we also assume that $\exp(-2\sqrt{K}) \leq \tilde{\epsilon}/2$ and that $4 \leq K^{1/4}$. Furthermore, we set $K'=16 \lambda_{\infty} K$. 
\end{enumerate}
In the next five steps, we adapt the arguments as in the proof of \cite[Lemma 4.1]{bezuidenhout1994critical} to our setting of long-range interactions.
 
\begin{enumerate}
\item[5.] Fix $t_0 >0$ so that $\bP_{\lambda,\delta}( |A_t^A| \geq N ) \geq 1- \tilde{\epsilon}$ for all $t\geq t_0$. That this is possible follows by Proposition \ref{prop1.1} and corresponds to \cite[Equation (4.4)]{bezuidenhout1994critical}.

\item[6.] We claim that there are $L^{(1)}_d \in [0,\infty)$ and $t_1\geq t_0$ so that, for every subset 
\begin{equation} B_{L',T',\theta} \supset \bZ^{d-1} \times [-L_d^{(1)},L_d^{(1)}] \times [0,t_1],
\end{equation}
where $L'\in \left( [0,\infty) \cup \{\infty\}\right)^d$, $T'\in (0,\infty)$ and $\theta \in \bR^d$,  it holds that
\begin{equation}\label{eq:Eq4.5}
\left(1-\bP_{\lambda,\delta}(| \leftidx{_{B_{L',T'}^{\theta}}}A_T^A | \leq N)\right) \left(1-\bP_{\lambda,\delta}(E_{L',T'}^{\theta,A,+}\geq K')\right) \left(1-\bP_{\lambda,\delta}(E_{L',T'}^{\theta,A,-}\geq K) \right) <\tilde{\epsilon}^3/2.
\end{equation}
Note that here, within the probabilities of \eqref{eq:Eq4.5}, the term $E_{L',T'}^{\theta,A,\pm}$ is the expected number of infection arrows from $I_{L',T'}^{,\theta,A}$ to the right and left in the last coordinate direction, respectively. 
If this is replaced by the cardinality of the infected points in an $2R$-boundary of the box $B_{L',T'}^{\theta}$, denoted by $N_{S_d}^{\pm}$ in \cite{bezuidenhout1994critical}, and where  $R$ is the range of the process considered therein, this  corresponds to \cite[Equation (4.5)]{bezuidenhout1994critical}. Recall that we did a similar adaptation for the proof of Theorem \ref{thm:fstc}, which also there was crucial in order to deal with the long-range setting.  

As in the proof of  Theorem \ref{thm:fstc},  with the above modification, the proof of the inequality \eqref{eq:Eq4.5} goes through as in the proof of \cite[Proposition 3.11]{bezuidenhout1994critical} without any new ideas, by applying arguments similar to those for the proof of Proposition \ref{Prop2}. We therefore omit specifying further details to this argument here. 

Here one should also note that, unlike the statement of  \cite[Proposition 3.11]{bezuidenhout1994critical}, we have set the first $d-1$ coordinates of $L^{(1)}$ equal to $\infty$. However, this does not alter the proof in any way. In fact, it is this version of  \cite[Proposition 3.11]{bezuidenhout1994critical} which is applied in the proof \cite[Lemma 4.1]{bezuidenhout1994critical} and that will be needed in the following arguments.

\item[7.] By Step 5 above, and as in  \cite[Equation (4.6)]{bezuidenhout1994critical}, we may fix $L^{(2)}_d \geq \max (L^{(1)}_d, N_0)$ and $t_2\geq t_1$ so that 
\begin{equation}
\bP_{\lambda,\delta}(| \leftidx{_{L^{(2)}}}A_{t_2}^A | \geq N) \geq 1- \tilde{\epsilon}/2,  
\end{equation}
 and where $L^{(2)}=(\infty,\dots,\infty,L^{(2)}_d)$.

\item[8.] Fix a non-integer number  $L^{(3)}=(\infty,\dots,\infty,L^{(3)}_d)$ with $L^{(3)}_d \geq \theta_0 L^{(2)}_d/2 + t_2 \theta_0$.  

This choice has two important consequences. Firstly, for any box $B_{L^{(3)},t_3}^{\theta}$ with $t_3\geq t_2$ and $|\theta|>\theta_0$ that does not fully contain $B_{L^{(2)}, t_2}$, it holds by \eqref{eq:AML} that
\begin{equation}
\min \left(\bP_{\lambda,\delta}(E_{L^{(3)},t_3}^{\theta,A,+}\geq K'), \bP_{\lambda,\delta}(E_{L^{(3)},t_3}^{\theta,A,-}\geq K') ) \right)<1-\tilde{\epsilon}.
\end{equation}
Secondly, whenever $|\theta| \leq \theta_0$, we have that $B_{L^{(2)},t_2} \subset B_{L^{(3)},t_3}^{\theta}$ and so the claim of Step 6 apply to any such space-time box.

\item[9.] The choice of $L^{(3)}$ in the previous step corresponds to that of $w_2$ in \cite[Equation (4.9)]{bezuidenhout1994critical}. 
The remaining arguments for the proof of \cite[Lemma 4.1]{bezuidenhout1994critical}, particularly those for the proof of Claim 4.10 and Claim 4.16 therein, albeit somewhat technical, applies  essentially word by word to our long-range setting after minor notational change (i.e.\ replacing the objects $N_{S_d}^{\pm}$ by $E_{L^{(3)},t_3}^{\theta,A,\pm}$ and noting that $ | \leftidx{_L^{(2)}}A_{t_2}^A | \geq N$ corresponds to $N_T(B_{L^{(2)}},t_2)\geq N$). Therefore, referring to \cite{bezuidenhout1994critical} for further details to this argument, we conclude the following: there exists a box $B_{L,T}^{\theta}$ with $L\in (0,\infty)^d$, $T<\infty$ and $ \theta=(0,\dots,0,\theta_d)$ satisfying $|\theta_d|\leq \theta_0$ such that
\begin{align}\label{eq:lem4.1a} 
& \bP_{\lambda,\delta} \left(| \leftidx{_{B_{L,T}^{\theta}}}A_{T}^{A} | >  N \right) > 1-\tilde{\epsilon};
\\ &     \bP_{\lambda,\delta} \left( E_{L,T}^{\theta,A,\pm}>K' \right) > 1-2\tilde{\epsilon}, \label{eq:lem4.1b}
\end{align}
and where the coordinates $L$ and the quantity $T$ can be chosen as large as we wish.
\end{enumerate}

With the identities obtained in Step 9, and letting $L$, $T$ and $\theta$ be such that \eqref{eq:lem4.1a}-\eqref{eq:lem4.1b}  hold with $L\geq \max(2n,n\theta_0/2)$ and $T\geq 1$, we now explain how to complete the proof by applying arguments as in Steps 5-7 of the proof of Theorem \ref{thm:fstc}.

\begin{enumerate}
\item[10.] By essentially the same argument as in  Step 5 of the proof of Theorem \ref{thm:fstc} we conclude from \eqref{eq:lem4.1a} and since $L\geq 2n$ that 
\begin{equation}\label{eq:block1NS}
 \bP_{\lambda,\delta} \left(\exists x\in B_{L,T}^{\theta}  \colon A(x) \subset \leftidx{_{B_{2L,T+1}^{\theta}}}{A}{_{T+1}^{A}} \right) \geq (1-\tilde{\epsilon})^2,
 \end{equation}
 which by our choice of $\tilde{\epsilon}$ implies \eqref{C3}.
 \item[11.] Also the arguments as in Step 6 of the proof of Theorem \ref{thm:fstc} can be modified to the non-symmetric setting. 
 Indeed, similar to therein, for $R>1$ let $M_{L,T,R}^{\theta,A,\pm}$ denote the maximal number of points within $S_{L,T,R}^{\theta,A,\pm}$ to which there is an infection arrow from a space-time point in $I_{L,T}^{\theta,A}$ and  complying with the following: any two distinct points  $(x,s)$ and $(y,t)$ contained in $M_{L,T,R}^{\theta,A,\pm}$  satisfies either that $|s-t|> 1$ or that $\|x-y\|_{\infty} \geq 2n+1$. 
 
 Conditioned on the event in \eqref{eq:lem4.1b} to hold, we claim that the event 
   \begin{equation}\label{eq:Step11_M}
  \left\{ M_{L,T,2\theta_0 (L+T)}^{\theta,A,\pm}> N' \right\}
  \end{equation}
  occurs with a probability of at least $1-8\tilde{\epsilon}$. 
  To see this, note first that, by  Proposition \ref{prop:fstcWS1} and the inequality \eqref{eq:lem4.1b}, and using that $L\geq n\theta_0/2$, we have 
    \begin{equation}\label{eq:claimBlokkis_new}
       \bP_{\lambda,\delta} \left( E_{L,T, 2\theta_0 (L+T)}^{\theta,A,\pm}>K' \right) > 1-4\tilde{\epsilon}.
  \end{equation}
  This is the key inequality in Step 6 of the proof of Theorem \ref{thm:fstc} that needs to be modified. Indeed, replacing  \eqref{eq:claimBlokkis} in Step 6 of the proof of Theorem \ref{thm:fstc} with this new estimate of \eqref{eq:claimBlokkis_new}, with the our choice of $K$ and $K'$ as in Step 4 above, we can now proceed exactly as in that proof to obtain the above claimed inequality for the event in \eqref{eq:Step11_M}. 

  \item[12.] As a consequence of the previous step, by an argument reminiscent of that in Step 7 of the proof of Theorem \ref{thm:fstc}, we have that
  \begin{equation}\label{eq:block2NS}
   \bP_{\lambda,\delta} \left( \exists (x,t)  \in S_{L,T,2\theta_0( L+T)}^{\theta,A,\pm} \colon A(x) \subset  \leftidx{_{B_{2(1+2\theta_0)(L+T),T+1}^{\theta}}}{A}{_{t+1}^{A}} \right) \geq (1-8\tilde{\epsilon}) 
   (1-2\tilde{\epsilon})(1-\tilde{\epsilon}),
  \end{equation}
  i.e., by our choice of $\tilde{\epsilon}$, the inequality \eqref{C4} holds with $r=2\theta_0$. Indeed, the factor $1-2\tilde{\epsilon}$ corresponds to \eqref{eq:lem4.1b}. The factor $1-8\tilde{\epsilon}$  corresponds to the probability that then also \eqref{eq:Step11_M} holds. In that event, by our choice of $N'$ and independence and translation invariance of  the graphical representation within disjoint space-time regions, with a a probability of at least $1-\tilde{\epsilon}$ the event in  \eqref{eq:block2NS} occurs.
\end{enumerate}
\end{proof}

\section{Resilient parameters}\label{sec:proofs}
In this section we present the proofs of Theorem \ref{thm:resilient2} and Theorem \ref{thm:resilient}.  In the last subsection we also discuss briefly some further applications of the resilient property, i.e.\ local survival, complete convergence and the shape theorem.

\subsection{Proof of Theorem \ref{thm:resilient2}}

Theorem \ref{thm:resilient2} follows by arguments similar to those in \cite{bezuidenhout1990critical}, also utilising the extension to the finite-range case in \cite{bezuidenhout1994critical}.

\begin{proof}[Proof of Theorem \ref{thm:resilient2}]
Firstly, assume that $\chi(\lambda,\delta)=\infty$ and $\phi(\lambda,\delta)=0$. Then, by Proposition \ref{prop:SubCritExpDecay}i), the corresponding LRCP is not subcritical, and by Proposition \ref{prop:SubCritExpDecay}iii) it is not supercritical. Hence, it is critical.

Now, let $\lambda \in \Lambda$ be resilient and assume that $\delta = \delta_c(\lambda)$, i.e.\ that the corresponding LRCP is critical. Then, by  Proposition \ref{prop:SubCritExpDecay}ii), we know that $\chi(\lambda,\delta)=\infty$ and so what remains to be shown is that $\phi(\lambda,\delta)=0$. That this holds follows by a minor adaptation of the proof given in the case of the ordinary contact process, first proved in \cite{bezuidenhout1990critical}.

Indeed,  suppose that $\phi(\lambda,\delta)>0$. Then, since $\lambda$ is resilient, we have that $\phi(\leftidx{_{k}}\lambda,\delta)>0$ for all $k\in \bN$ large. Consequently, since $\leftidx{_{k}}\lambda$ is finite range, by \cite{bezuidenhout1994critical} the finite space-time condition as in Theorem \ref{thm:fstcWS} holds. Moreover, since for any fixed $\epsilon>0$ the probabilities in \eqref{C3} and \eqref{C4} are continuous with respect to the parameters of the LRCP, there is a $c>1$ such that the finite space-time condition also holds for the LRCP with parameters $\leftidx{_{k}}\lambda$ and $c \delta$. To see this, consider the natural monotone coupling of the process with infection parameter $\leftidx{_{k}}\lambda$ and healing parameter $\delta$ and $c\delta$, respectively. Then, with a probability converging to $1$ as $c\rightarrow 1$ the two process share exactly the same healing events in the space-time boxes as in \eqref{C3} and \eqref{C4}, and in that event the two processes are identical. Therefore, since Theorem \ref{thm:fstcWS} holds, by the work of \cite{bezuidenhout1994critical} it follows that $\phi(\leftidx{_{k}}\lambda,c\delta)>0$. Hence, by monotonicity of the LRCP, also $\phi(\lambda,c\delta)>0$. But this contradicts the assumption that the process is critical. Thus, we conclude that $\phi(\lambda,\delta)=0$. 
\end{proof}

\subsection{Proof of Theorem \ref{thm:resilient}}

As a first step, we argue that  $\phi(\cdot, \cdot)$ is right-continuous.

\begin{proposition}\label{prop:rc}
Let $\lambda \in \Lambda$ and $\delta>0$.   Then, for any two sequences $(\lambda^{(n)})$ in $\Lambda$ and $\delta^{(n)}$ in $[0,\infty)$ satisfying  that
$\lambda^{(n)} \downarrow \lambda$ and $\delta^{(n)} \uparrow \delta$, 
it holds that $\lim_{n \rightarrow \infty} \phi(\lambda^{(n)},\delta^{(n)}) = \phi(\lambda,\delta).$
\end{proposition}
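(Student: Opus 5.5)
Right-continuity holds because $\phi$ is the decreasing limit of finite-time survival probabilities, each continuous in the parameters, and an infimum of continuous monotone functions is semicontinuous in the right direction. Write
\[
\phi_t(\lambda,\delta)\coloneqq \bP_{\lambda,\delta}(A_t^o\neq\emptyset).
\]
Then $\phi_t(\lambda,\delta)\downarrow\phi(\lambda,\delta)$ as $t\to\infty$, so $\phi=\inf_t\phi_t$.

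By the natural monotone coupling, $\phi_t$ and $\phi$ are non-decreasing in $\lambda$ and non-increasing in $\delta$. Since $\lambda^{(n)}\geq\lambda$ and $\delta^{(n)}\leq\delta$, we get $\phi(\lambda^{(n)},\delta^{(n)})\geq\phi(\lambda,\delta)$, and the sequence $\phi(\lambda^{(n)},\delta^{(n)})$ is non-increasing in $n$. Its limit therefore exists and is at least $\phi(\lambda,\delta)$. For the reverse inequality, fix $t$. Then
\[
\lim_n\phi(\lambda^{(n)},\delta^{(n)})\leq \lim_n\phi_t(\lambda^{(n)},\delta^{(n)}),
\]
so it suffices to show $\phi_t(\lambda^{(n)},\delta^{(n)})\to\phi_t(\lambda,\delta)$ for each fixed $t$ and then let $t\to\infty$.

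For continuity of $\phi_t$, couple all processes on one probability space. Take healing Poisson processes of rate $\delta^{(1)}$, plus extra healing marks of rate $\delta^{(n)}-\delta^{(1)}$ obtained by thinning a rate-$(\delta-\delta^{(1)})$ process. Take infection processes for $\lambda$, plus extra arrows of rate $\lambda^{(n)}_{o,y}-\lambda_{o,y}$ obtained by thinning a rate-$(\lambda^{(1)}_{o,y}-\lambda_{o,y})$ process. On the event $\{A_t^o\neq\emptyset\}$ computed with $(\lambda^{(n)},\delta^{(n)})$ but not with $(\lambda,\delta)$, one of two things must happen. Either some site of the $\lambda^{(1)}$-process with healing $\delta^{(1)}$ (the maximal dominating process) reached by time $t$ receives a healing mark of rate $\delta-\delta^{(n)}$ that it lacks, or some extra infection arrow of rate $\lambda^{(n)}-\lambda$ is used.

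Both events can be controlled by working on a large finite box. The dominating process has finite expected size up to time $t$, since its mean is at most $e^{\lambda^{(1)}_\infty t}$ by comparison with branching. So for $\eta>0$ choose $L$ with the probability of leaving $B_L$ before $t$ below $\eta$. Inside $B_L\times[0,t]$, the extra healing marks have total rate $|B_L|(\delta-\delta^{(n)})t\to0$. The extra arrows emanating from $B_L$ have total rate $|B_L|\,t\sum_y(\lambda^{(n)}_{o,y}-\lambda_{o,y})$. This tends to $0$ by monotone (dominated) convergence, because $\lambda^{(1)}-\lambda$ is summable and $\lambda^{(n)}_{o,y}\downarrow\lambda_{o,y}$ pointwise. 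Hence $|\phi_t(\lambda^{(n)},\delta^{(n)})-\phi_t(\lambda,\delta)|\leq\eta+o(1)$, which gives continuity.

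The main obstacle is justifying this coupling cleanly for infinitely many sites. The localization to $B_L$ handles it, and the summability of $\lambda^{(1)}$, namely $\lambda^{(1)}_\infty<\infty$, is exactly what makes the dominated convergence step work. The case $\delta^{(n)}=0$ needs no special treatment, since $\phi_t$ is well defined for $\delta=0$ as well.
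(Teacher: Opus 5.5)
Your proof is correct and follows essentially the same route as the paper: the natural monotone coupling makes $\phi(\lambda^{(n)},\delta^{(n)})$ non-increasing in $n$ and bounded below by $\phi(\lambda,\delta)$. The reverse inequality then comes from approximating $\phi$ by finite-time survival probabilities, whose continuity is proved by localizing all coupled processes to a finite box, uniformly in $n$, via the dominating $(\lambda^{(1)},\delta^{(1)})$ process. The differences are only in execution: you work with the forward event $\{A_t^o\neq\emptyset\}$ rather than the paper's backward (dual) formulation, you localize by dominating the set of sites ever infected in $[0,t]$ with the $\delta=0$ process (whose expected size is at most $e^{\lambda^{(1)}_\infty t}$) instead of citing Chatterjee--Dey with Markov's inequality, and you make the continuity step explicit by bounding the expected number of discrepant healing marks and arrows in $B_L\times[0,t]$.
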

\begin{proof}
This follows by the monotonicity of the model and the graphical construction as in Section \ref{sec:preliminiaries}, using that 
$\phi(\lambda,\delta) = \bP_{\lambda,\delta} (\forall t<0 \exists y \in \bZ^d \colon (y,t) \rightarrow (o,0))$ 
for any $\lambda \in \Lambda$ and $\delta >0$. 
 Indeed, for each $n \in \bN$, we have that 
\begin{align} 
\phi(\lambda^{(n)},\delta^{(n)}) 
 &=  \bP_{\lambda^{(n)},\delta^{(n)}} \left(\forall t<0 \exists y \in \bZ^d \colon (y,t) \rightarrow (o,0)\right)  
\\ &= \lim_{T \rightarrow \infty} \bP_{\lambda^{(n)},\delta^{(n)}} \left(\exists y \in \bZ^d \colon (y,-T) \rightarrow (o,0) \right).
\end{align}
Now, note that $\phi(\lambda^{(n)},\delta^{(n)})$ is non-increasing in $n$ since,  for any $T>0$ and $n\geq 1$, 
\begin{equation}
\bP_{\lambda^{(n)},\delta^{(n)}} \left(\exists y \in \bZ^d \colon (y,-T) \rightarrow (o,0) \right) \geq \bP_{\lambda^{(n+1)},\delta^{(n+1)}} \left(\exists y \in \bZ^d \colon (y,-T) \rightarrow (o,0) \right),
\end{equation}
as follows directly by the natural monotone coupling. Thus, also $\phi(\lambda^{(n)},\delta^{(n)})\geq \phi(\lambda,\delta)$. 

On the other hand, we claim that, for any $\epsilon>0$ and any $T>0$ large, 
\begin{equation}\label{eq:rightCont1}
\lim_{n \rightarrow \infty} \bP_{\lambda^{(n)},\delta^{(n)}} \left(\exists y \in \bZ^d \colon (y,-T) \rightarrow (o,0)\right) \leq \phi(\lambda,\delta)+ \epsilon.
\end{equation}
To see this, fix first $T>0$ so large that
\begin{equation}\label{eq:rightContHelp2}
\phi(\lambda,\delta) \geq \bP_{\lambda,\delta} \left(\exists y \in \bZ^d \colon (y,-T) \rightarrow (o,0)\right) -\epsilon/2
\end{equation}
Then, since $\lambda^{(1)} \in \Lambda$ and by monotonicity, it follows by \cite[Theorem 1.2ii)]{chatterjee2016multiple} and the Markov inequality that, for any $t>T$ there is  an $l=l(t)$ such that
\begin{equation}
\widehat{\bP} \left( A_s^{(n)} \subset B_l \text{ for all } s \in [0,t] \text{ for all } n \right) \geq 1-\epsilon/4, 
\end{equation}
where $A_s^{(n)}$ denotes the process with parameters $\lambda^{(n)}$ and $\delta^{(n)}$, and $\widehat{\bP}$ is the natural monotone coupling. 
Note also that, by self-duality of the LRCP, we have that
\begin{equation}
\bP_{\lambda^{(n)},\delta^{(n)}} \left(\exists y \in \bZ^d \colon (y,-t) \rightarrow (o,0)\right) = \bP_{\lambda^{(n)},\delta^{(n)}} \left(\exists y \in \bZ^d \colon (o,0) \rightarrow (y,t)\right).
\end{equation}
Hence, by continuity of the processes restricted to infection and healing events of the graphical construction contained in $B_{l,t}$, we have that, for all $n$ large,
\begin{equation}\label{eq:rightContHelp1}
\bP_{\lambda^{(n)},\delta^{(n)}} \left(\exists y \in \bZ^d \colon (o,0) \rightarrow (y,t)\right) \leq \bP_{\lambda,\delta} \left(\exists y \in \bZ^d \colon (o,0) \rightarrow_{B_{l,t}} (y,t)\right) + \epsilon/2.
\end{equation}
Combining \eqref{eq:rightContHelp1} with \eqref{eq:rightContHelp2} we thus obtain \eqref{eq:rightCont1}, and from which, since $\epsilon>0$ was arbitrary and $\phi(\lambda^{(n)},\delta^{(n)})\geq \phi(\lambda,\delta)$, we conclude the proof. 
\end{proof}

Before providing the proof of Theorem \ref{thm:resilient}, we present the following key estimate in order to show that $\phi(\cdot,\cdot)$ also is left-continuous. 

\begin{proposition}\label{prop:tue}
   Let $\lambda \in \Lambda$ be resilient. Then for any $\delta<\delta_c(\lambda)$ there exists constants $C,c \in (0,\infty)$ such that, for any $t>0$ and $\hat{\lambda} \geq \lambda$, 
    \begin{equation}\label{eq:tue}
        \bP_{\hat{\lambda},\delta} \left( t \leq \tau^o <\infty \right) \leq Ce^{-ct}
    \end{equation}
    where $\tau^o \coloneqq \inf \{ t >0 \colon \eta_t^o=\underline{0} \}$ is the time until extinction of the infection process. 
\end{proposition}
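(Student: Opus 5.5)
The plan is to reduce to a finite-range process, where the classical Bezuidenhout--Grimmett machinery applies, and then lift the estimate to all $\hat\lambda\geq\lambda$ by monotonicity together with a restart argument.

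Since $\delta<\delta_c(\lambda)$, we have $\phi(\lambda,\delta)>0$. Pick $\delta'\in(\delta,\delta_c(\lambda))$, so $\phi(\lambda,\delta')>0$ as well. Resilience then gives $k$ with $\phi(\leftidx{_{k}}\lambda,\delta')>0$. Set $\mu:=\leftidx{_{k}}\lambda$, which has finite range, and note that $\mu\leq\lambda\leq\hat\lambda$. The margin $\delta'>\delta$ is only a convenience: it guarantees that $(\mu,\delta)$ lies strictly inside the supercritical regime of a finite-range process.

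By \cite{bezuidenhout1994critical}, the finite space-time condition of Theorem \ref{thm:fstcWS} holds for $(\mu,\delta)$. The renormalization argument (as in the proof of Theorem \ref{thm:truncation}) then yields a coupling in which, started from a single good block $A(x)$, the process dominates a one-dependent oriented site percolation of density $1-\epsilon$ on a renormalized lattice. For $\epsilon$ small, standard oriented-percolation estimates give two facts. First, the renormalized cluster from a given site is finite yet has height at least $m$ with probability at most $e^{-cm}$. Second, from any set of $j$ good sites, the probability of eventual extinction of the percolation is at most $e^{-cj}$.

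The key step is to transfer this to $(\hat\lambda,\delta)$ and to the event $\{t\le\tau^o<\infty\}$ via the following scheme.

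1. Use the natural monotone coupling of $(\mu,\delta)$ and $(\hat\lambda,\delta)$, so that $\hat A_s\supset A^\mu_s$ along a common graphical construction. Rather than dominating $\hat A$ from below by a single process, I would run a restart argument on the $\hat\lambda$-process itself.

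2. At times $s_j=j\cdot s_*$, with $s_*$ a fixed block time, ask whether some infected site of $\hat A_{s_j}$ generates a good block, i.e.\ $A(x)\subset{}_{A(x)}A^{x}_{1}$, using only $\mu$-arrows.

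3. Each attempt on a currently infected site succeeds with probability at least $\gamma>0$, uniformly in $\hat\lambda$, because it uses only $\mu$-arrows and healings. Given success, the attempt launches $\mu$-driven oriented percolation, which survives forever with probability at least some $\rho>0$.

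4. Therefore each attempt made while the process is still alive fails with probability at most $1-\gamma\rho<1$, independently of the past by the strong Markov property. On $\{\tau^o\ge t\}$ there are about $t/s_*$ such attempts, since the process is nonempty at each $s_j\le t$.

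5. It remains to control attempts that succeed and then fail. The extra $\hat\lambda$-arrows only help survival, but I still need the event that a launched percolation dies late to have exponentially small probability. This is exactly the first oriented-percolation fact above: a cluster that dies after height $m$ has probability at most $e^{-cm}$.

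6. Combining these gives, for the number of restarts before time $t$, a geometric-type bound $\bP(t\le\tau^o<\infty)\le Ce^{-ct}$, with $C,c$ depending only on $(\mu,\delta)$ and hence uniform over $\hat\lambda\geq\lambda$.

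The main obstacle is making the restart argument rigorous with dependence on the past. Each attempt must use only fresh graphical-construction randomness, which is handled by restricting to space-time boxes above the stopping time. The statement ``a failed percolation run takes time $m$ with probability at most $e^{-cm}$'' must also hold uniformly given the history. I would first try to import this directly from the finite-range results of \cite{bezuidenhout1994critical}, which give exponential bounds of the form $\bP_{\mu,\delta}(t\le\tau<\infty)\le Ce^{-ct}$ from good blocks. I would then combine this with the observation that, under the coupling, $\hat\lambda$ can only add infected sites and hence extra restart opportunities.
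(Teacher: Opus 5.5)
Your overall route matches the paper's. You use resilience to pass to the finite-range truncation $\mu={}_k\lambda\le\lambda\le\hat\lambda$ and take the finite-range exponential bound from \cite{bezuidenhout1994critical}. You then run a restart argument on the $\hat\lambda$-process in which every trial uses only $\mu$-arrows and the common healing marks, so all constants depend on $(\mu,\delta)$ alone.

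However, step 4 as written is false, and it is the step that produces your exponential rate. Whether an attempt launched at $s_j$ fails is decided only when its percolation dies. That happens at a random time that may lie far beyond $s_{j+1}$, so the failure event is not $\cF_{s_{j+1}}$-measurable. Attempts at consecutive deterministic times also overlap and share the same graphical randomness. The strong Markov property at $s_j$ therefore gives no product bound $(1-\gamma\rho)^{t/s_*}$ over the roughly $t/s_*$ attempts before time $t$. Indeed, conditioning on an earlier launched percolation eventually dying can make later attempts more likely to fail. So the deterministic count of attempts on $\{\tau^o\ge t\}$ has to be abandoned, not merely supplemented by step 5.

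The repair, which your steps 5--6 and last paragraph gesture at, is exactly the paper's construction. Launch a $\mu$-trial from one currently infected site at a stopping time $v_l$. Attempt again only at the stopping time $v_{l+1}$ at which that trial has definitively died (extinction of the $\mu$-process started at $v_l$, or of the percolation it dominates), and only if the $\hat\lambda$-process is still alive then.

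By the strong Markov property and translation invariance, given $\cF_{v_l}$ each trial is a fresh copy of the $\mu$-process from one site. It survives forever with probability at least some $p>0$ (e.g.\ $p=\phi(\mu,\delta)$), and if it dies, its duration has exponential tails uniformly in the past. Hence for small $\theta>0$ one has $\bE[e^{\theta(v_{l+1}-v_l)};v_{l+1}<\infty\mid\cF_{v_l}]\le q<1$.

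On $\{\tau^o<\infty\}$ every trial fails, and $\tau^o\le v_{l_0}$, where $v_{l_0}$ is the first restart time at which the process is already empty. Summing over $l_0$ then gives $\bP(t\le\tau^o<\infty)\le e^{-\theta t}/(1-q)$.

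With that replacement your argument coincides with the paper's. The ``second fact'' about extinction from $j$ good sites and the margin $\delta'>\delta$ are not needed.
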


\begin{proof}
Firstly, since $\delta<\delta_c(\lambda)$, we have that $\phi(\lambda,\delta)>0$. 
If $\lambda$ is a nearest neighbour infection parameter, the statement of Proposition \ref{prop:tue} therefore corresponds to \cite[Theorem 2.30a)]{LiggettSIS1999}. The proof in that case follows by a restart argument and a coupling comparison with a supercritical  oriented percolation process. This argument also extends to  the case where $\lambda$ is of finite range. Indeed, as concluded in \cite[Proposition 5.1]{bezuidenhout1994critical} (for a more general class of models), for $\epsilon>0$ (after possibly making a linear change of space-time coordinates that leaves the time-coordinate fixed) one can find a finite set $A \subset \bZ^d$ and $L \in[0,\infty)^d$ and $T\in (0,\infty)$ such that  
        \begin{align}
& \bP_{\lambda,\delta} \left(\exists (y,t) \in R_{\pm} \colon  A(y)  \subset \leftidx{_{L}}{A}{_{T}^{D}} \right)> (1-\epsilon);
 \end{align}
 where $R_{\pm}= \{(x,t)\in B_{L,T} \colon (\pm x_d,t) \in [L_d/3,L_d] \times [T/3,T]\}$. This is in turn applied to provide a coupling comparison with a supercritical  oriented percolation process, in the same vein as for the nearest neighbour model. This is detailed in the proof of \cite[Theorem 2.4]{bezuidenhout1994critical} in Section 6. With this coupling comparison, the proof of \eqref{eq:tue} detailed in \cite[Theorem 2.30a)]{LiggettSIS1999} extends to the finite range case after only minor notational changes.
    
   Now, assume that $\lambda$ is of infinite range. Then, since it is assumed to be resilient, as in the proof of Theorem \ref{thm:resilient2}, we have that $\phi(\leftidx{_{k}}\lambda,\delta)>0$ for some $k \in \bN$, where $\leftidx{_{k}}\lambda$ is defined in \eqref{eq:lambdaKcK}. Particularly, $\leftidx{_{k}}\lambda$ is of finite range and therefore, as argued above, it satisfies \eqref{eq:tue}. To extend this inequality to $\lambda$ again goes by another restart argument that we detail next. 
   
   Consider the natural monotone coupling of the LRCP with infection parameters $\lambda$ and $\leftidx{_{k}}\lambda$ and common parameter $\delta$.
   Let $v_0=0$, and define iteratively $(v_l)_{l\geq1}$ as follows. If $\eta_{v_l}^o=\emptyset$, then we set $v_{l+1}=v_l$. Otherwise, if $\eta_{v_l}^o\neq\emptyset$, let $x_l$ be a vertex with the property that $\eta_{v_l}^o(x_l)=1$, chosen in an arbitrary manner. Let $(\eta_{v_l +t}^{(l)})$ denote the truncated LRCP started at time $v_l$ with only individual $x_l$ infected and only using the arrows stemming from the  $\leftidx{_{k}}\lambda$ parameter. Consequently,
we have that $\eta_{v_l +t}^{(l)}\leq \eta_{v_l +t}^{o}$ for all $t\geq0$ a.s. Now, set $v_{l+1} = \inf \{ s\geq v_k \colon \eta_{v_l +s}^{(l)} = \emptyset \}$ and let 
\begin{equation}
l_0 := \inf \left\{ l\geq 0 \colon v_{l}=v_{l+1} \text{ or } v_{l+1}=\infty \right\}.
\end{equation}
Thus, since $\phi(\leftidx{_{k}}\lambda,\delta)>0$, on each trial time $l$ there is a positive probability that $v_l=\infty$. In particular, $l_0$ is majorised by a geometrically distributed random variable and, moreover,  we have that 
\begin{equation}
\left\{ t < \tau^o < \infty\} = \{t < v_{l_0} < \infty\right\}.
\end{equation}
Further, from the graphical construction we have that, given $l_0=l$, the times $v_j-v_{j-1}$ for $1\leq j \leq l$ are i.i.d.  Moreover, they have exponential tails since \eqref{eq:tue} holds  for the LRCP with parameter $\leftidx{_{k}}\lambda$. 
By combining these two properties, we conclude the proof of the proposition for the LRCP with parameters $\lambda$ and $\delta$. Moreover, by monotonicity, we note that the last argument still applies if we instead consider the process with infection parameter $\hat{\lambda}\geq \lambda$, yielding exactly the same bound. 
\end{proof}

Finally, utilising the two propositions above, we now present the proof of Theorem \ref{thm:resilient}.

\begin{proof}[Proof of Theorem \ref{thm:resilient}]
    Let $\lambda \in \Lambda$ be resilient and $\delta>0$, and consider two sequences $\lambda^{(n)} \in  \Lambda$ and  $\delta^{(n)}$ in $[0,\infty)$ satisfying \eqref{eq:condTheoremResilient}. 
        Further, for $n\in \bN$, let $\lambda^{(n,+)}, \lambda^{(n,-)}  \in \Lambda$ be given by
    \begin{equation}
    \lambda^{(n,+)}_{o,y} \coloneqq \sup( \lambda_{o,y}^{(m)}, m\geq n ), \quad \lambda^{(n,+)}_{o,y} \coloneqq \inf( \lambda_{o,y}^{(m)}, m\geq n ),
    \end{equation} 
    and let $\delta^{n,+} = \sup( \delta^{(m)}, m\geq n)$ and $\delta^{n,-} = \inf( \delta^{(m)}, m\geq n)$.
    
    As a direct consequence of Proposition  \ref{prop:rc}, using that $\lambda^{(n,+)} \downarrow \lambda$ and $\delta^{(n,-)} \uparrow \delta$, we have that 
    \begin{equation}\label{eq:thm:rc}
    \lim_{n\rightarrow \infty} \phi( \lambda^{(n,+)}_{o,y},\delta^{(n,-)}) =\phi(\lambda,\delta).
    \end{equation}

    For $\delta\geq\delta_c(\lambda)$, we have that $\phi(\lambda,\delta)=0$, as follows by Proposition \ref{prop:SubCritExpDecay}i) and Theorem \ref{thm:resilient2}.
   Therefore, since $\lambda^{(n)} \leq \lambda^{(n,+)}$ and $\delta^{(n)}\geq \delta^{(n,-)}$, by monotonicity and since  \eqref{eq:thm:rc} holds, we conclude in this case that 
   \begin{equation}\lim_{n\rightarrow \infty} \phi( \lambda^{(n)}_{o,y},\delta^{(n)}) =0.
   \end{equation}
        
    Assume now  that $\delta<\delta_c(\lambda)$ so that $\phi(\lambda,\delta)>0$ by Proposition \ref{prop:SubCritExpDecay}iii). 
 Then, since $\lambda$ is resilient, there is $K \in \bN$ such that also $\phi(\leftidx{_{k}}\lambda,\delta)>0$ for all $k \geq K$. 
 Moreover, for any such $k\geq K$ fixed, we also have that $ \phi( \leftidx{_{k}}\lambda^{(n,-)}, \delta^{(n,+)})>0$  for all $n=n(k)$ large. 
  This follows by continuity of the infection and healing events, by a similar argument as in the proof of Theorem \ref{thm:resilient2}. 
Particularly, by monotonicity, it holds that $ \phi( \lambda^{(n,-)}, \delta^{(n,+)})>0$ for all $n$ large.

 Denote by $\widehat{\bP}_n$ the natural monotone coupling of the processes $(A_t^{(1)})$ and $(A_t^{(2)})$ with parameters $(\lambda^{(n,-)},\delta^{(n,+)})$ and $(\lambda,\delta)$, respectively, obtained via the graphical construction and both having initially only the origin in the infected state.   Then, for any $T>0$, 
\begin{align}
   | \phi(\lambda^{(n,-)},\delta^{(n,+)})-\phi(\lambda,\delta)| 
   &\leq \widehat{\bP}\left( A_T^{(1)} =\emptyset, A_T^{(2)}  \neq \emptyset \right) 
   \\&+ \widehat{\bP}\left( A_T^{(1)} \neq \emptyset, A_t^{(1)} =\emptyset \text{ for some } t>T  \right).
\end{align}
Now, for $\epsilon>0$ and any fixed $T$, by continuity of LRCP within a finite time-span, it follows that
\begin{equation}\label{eq:thm1.3_helpme}
 \widehat{\bP}\left( A_T^{(1)} =\emptyset, A_T^{(2)}  \neq \emptyset \right) \leq \epsilon/2
\end{equation}
for all $n=n(T)$ large. 
Moreover, by Proposition \ref{prop:tue}, 
there are uniform constants $C,c \in (0,\infty)$ that do not depend on $T$ nor on $n$ such that
\begin{equation}
\widehat{\bP}\left( A_T^{(1)} \neq \emptyset, A_t^{(1)} =\emptyset \text{ for some } t>T  \right) \leq Ce^{-cT}
\end{equation}  
Therefore, by first tuning $T$ large such that $Ce^{-cT}\leq \epsilon/2$ and then tuning $n$ large such that \eqref{eq:thm1.3_helpme} holds, we have that 
\begin{equation}| \phi(\lambda^{(n,-)},\delta^{(n,+)})-\phi(\lambda,\delta)| \leq \epsilon.
\end{equation} 
Hence, since $\epsilon>0$ was arbitrary, $\lambda^{(n,-)}\uparrow \lambda$ and $\delta^{(n,+)}\downarrow \delta$,  we conclude that 
\begin{equation}\phi(\lambda^{(n,-)},\delta^{(n,+)})\uparrow \phi(\lambda,\delta) \quad \text{ as }n\rightarrow \infty. \end{equation}  
From this and \eqref{eq:thm:rc}, and since 
$\lambda^{(n,-)}\leq \lambda^{(n)} \leq \lambda^{(n,+)}$ and $\delta^{(n,-)} \leq \delta^{(n)} \leq \delta^{(n,+)}$,  
we conclude that $\lim_{n \rightarrow \infty } \phi( \lambda^{(n)}, \delta^{(n)})= \phi(\lambda,\delta)$.

Lastly, to see that $\delta_c(\lambda^{(n)}) \rightarrow \delta_c(\lambda)$ as $n \rightarrow \infty$, note that, for  any $\epsilon>0$, 
\begin{equation}
\lim_{n \rightarrow \infty} \phi(\lambda^{(n)}, (1-\epsilon) \delta_c(\lambda)) =  \phi(\lambda, (1-\epsilon) \delta_c(\lambda)) >0.
\end{equation}
Thus, $\phi(\lambda^{(n)}, (1-\epsilon) \delta_c(\lambda)) >0 $ for all $n$ large, and so $\delta_c(\lambda^{(n)}) \geq (1-\epsilon) \delta_c(\lambda)$. 
On the other hand, since the LRCP with parameters $\lambda$ and $(1+\epsilon)\delta_c(\lambda)$ is subcritical, we know from Proposition \ref{prop:SubCritExpDecay}i) that, for some $\tau>0$ and $T>0$,
\begin{equation}\label{eq:lastexpectation}
 \frac{1}{T} \log  \bE_{\lambda,(1+\epsilon)\delta_c(\lambda)} \left[  \left| \left\{ x\in \bZ^d \colon \eta_T^o(x)=1\right\} \right| \right] \leq - \tau/2.
 \end{equation}
But, since $T$ is fixed, the expectation in \eqref{eq:lastexpectation} is continuous with respect to the parameter $\lambda$. Therefore also
\begin{equation}
\lim_{n \rightarrow \infty} \frac{1}{T} \log  \bE_{\lambda^{(n)},(1+\epsilon)\delta_c(\lambda)} \left[  \left| \left\{ x\in \bZ^d \colon \eta_T^o(x)=1\right\} \right| \right] \leq - \tau/2
\end{equation}
From this and \cite[Theorem 1]{Swart2018} we conclude that $\delta_c(\lambda^{(n)}) \leq (1+\epsilon) \delta_c(\lambda)$ for all $n$ large. 
This implies the claim since also $\delta_c(\lambda^{(n)}) \geq (1-\epsilon) \delta_c(\lambda)$ for all $n$ large, as concluded above.
\end{proof}

\subsection{Local survival, complete convergence and the shape theorem}

In this last subsection, we briefly discuss some further applications for resilient LRCP.

\textbf{Local survival.} 
As common for the ordinary contact process, we say that the LRCP  \emph{survive globally} if  $\phi(\lambda,\delta)>0$. Moreover, it \emph{survive locally} if $\beta(\lambda,\delta)>0$ where $\beta \colon \Lambda \times (0,\infty) \to [0,1]$ is given by 
\begin{equation}
\beta(\lambda,\delta) \coloneqq \inf_{T>0} \bP_{\lambda,\delta}( \eta_t^o(o) = 1 \text{ for  some } t>T).
\end{equation}
For the ordinary contact process on $\bZ^d$,  \cite[Theorem I.2.25a)]{LiggettSIS1999} states that $\phi(\lambda,\delta)>0$ if and only if $\beta(\lambda,\delta)>0$, where the only if is the nontrivial statement. This extends to the LRCP with parameters $\lambda \in \Lambda$ and $\delta>0$ under the additional assumptions that $\lambda$ is resilient, symmetric and irreducible. Indeed, consider $\lambda\in \Lambda$ and $\delta>0$ so that $\phi(\lambda,\delta)>0$. Then, since $\lambda$ is resilient, also $\phi(\leftidx{_{k}}\lambda,\delta)>0$ for some $k \in \bN$ under which $\leftidx{_{k}}\lambda$ is symmetric and irreducible. Then, by applying Proposition \ref{Prop3}, one can easily adapt our argument to conclude that the statement of Theorem \ref{thm:fstc} holds with $A=[-n,n]^d$ for some $n \in \bN$, as in \cite[Theorem I.2.12)]{LiggettSIS1999}. With this minor modification, the proof of \cite[Theorem I.2.25a)]{LiggettSIS1999} extends without further changes to conclude that $\beta(\leftidx{_{k}}\lambda)>0$ and therefore also $\beta(\lambda)>0$ by monotonicity. 

\textbf{Complete convergence.}  
For parameters $\lambda \in \Lambda$ and $\delta>0$, consider the function $\rho \colon \{0,1\}^{\bZ^d}\to [0,1]$  given by $\rho(\omega) \coloneqq \bP_{\lambda,\delta}(\eta_t^{\omega} \neq \underline{0}  \text{ for some } t>0).$ 
Then, whenever $\lambda$ is resilient, symmetric and irreducible, 
\begin{equation}\label{eq:cc}
    \bP_{\lambda,\delta}(\eta_t^{\omega}\in \cdot)\implies 
    \rho(\omega) \delta_{\underline{0}}(\cdot) + (1-\rho(\omega)) \bar{\nu}_{\lambda,\delta}(\cdot) \quad \text{ as }t\rightarrow \infty,
\end{equation}
for any $\omega \in \{0,1\}^{\bZ^d}$, where $\implies$ denotes weak converges. This follows by the methods of \cite{bezuidenhout1990critical} for the  ordinary contact process, see e.g.\ \cite[Theorem I.2.27)]{LiggettSIS1999} for a proof. Note that the only non-trivial case is when $\phi(\lambda,\delta)>0$, i.e.\ within the supercritical regime.  Then, again since $\lambda$ is resilient, also $\phi(\leftidx{_{k}}\lambda,\delta)>0$ for some $k \in \bN$ under which $\leftidx{_{k}}\lambda$ is symmetric and irreducible. Hence, the statement of Theorem \ref{thm:fstc} holds with $A=[-n,n]^d$ for some $n \in \bN$. Using this, and further utilising the basic properties obtained from the graphical construction, the proof e.g.\ as in \cite{LiggettSIS1999} goes through essentially verbatim.

\textbf{The shape theorem.} 
 Consider the LRCP with  $\lambda \in \Lambda$ and $\delta>0$ such that $\phi(\lambda,\delta)>0$. Denote by $B_t$ the set of vertices that were infected within the time-window $[0,t]$, i.e.\
\begin{equation}
    B_t = \{x \in \bZ^d \colon \eta_s^o(x)=1 \text{ for some } s<t \}.
\end{equation}
Further, let $\tilde{B}_t = B_t + [0,1]^d \subset \bR^d$ be the set of points contained within distance $1$ of a lattice point in $B_t$. Then the so-called shape theorem states that there exists a non-trivial convex set $U \in \bR^d$ such that
\begin{equation}
\bar{\bP}_{\lambda,\delta} \left( (1-\epsilon)U \subset \frac{1}{t} \tilde{B}_t \subset (1+\epsilon){U} \: \forall \: t \text{ large} \right) = 1,
\end{equation}
where $\bar{\bP}_{\lambda}= \bP_{\lambda}( \cdot | \tau^o=\infty )$ with $\tau^o = \inf \{ t > \colon A_t^o = \emptyset \}$. We also write $\tau(y) = \inf\{ t >0 \colon y \in A_t^o \}$, $y \in \bZ^d$. 

For the nearest neighbour model on $\bZ^d$, $d\geq 1$, the shape theorem follows by the work of \cite{bezuidenhout1990critical}, see also \cite{DurrettGriffeath1982}. Further, assuming irreducibility and symmetry, in the finite range case it follows due to the work of \cite{bezuidenhout1994critical} e.g.\ as then all of the general conditions of  random linear growth models in  \cite[Theorem 2]{deshayes2025contact} are satisfied.  In particular, there are constants $C_1,C_2,M_1,M_2>0$ such that for all $t\in [0,\infty)$ and $x \in \bZ^d$, 
\begin{align}
&\text{(AML)} &\bP_{\lambda,\delta} \left(\exists y \in \bZ^d \colon \tau(y)\leq t \text{ and } \|y\| \geq M_1 \right) \leq C_1\exp(-C_2 t),
\\ &\text{(SC)} &\bP_{\lambda,\delta} \left(t <t^o < \infty \right) \leq C_1\exp(-C_2 t),
\\ &\text{(ALL)} &\bP_{\lambda,\delta} \left( \tau(y)\geq M_1\|x\| + t, \tau^o=\infty \right) \leq C_1\exp(-C_2 t).
\end{align}
In fact, (SC) holds due to Proposition  \ref{prop:tue}, whereas \cite[Proposition 1.1ii)]{AizenmanJung2007} implies that the LRCP satisfies the (AML), even when $\lambda \in \Lambda$ is of infinite range decaying at an exponential rate. Further, by the standard coupling with an oriented percolation model as in the proof of Theorem \ref{thm:truncation}, utilising that Theorem \ref{thm:fstc} holds with $A=[-n,n]^d$ for some $n \in \bN$, it can be shown that the LRCP  satisfies the (ALL) too for $\lambda$ irreducibility and symmetry.

We believe that the shape theorem can be further extended to the LRCP with infinite range infection parameter. Particularly, all the above estimates hold  when $
\lambda$ decays at an exponential rate, and we expect that the technique of \cite[Theorem 2]{deshayes2025contact}  extends to this setting without much further ado. 

The shape theorem was shown to hold for the corresponding first-passage percolation process, i.e.\ the LRCP with $\delta=0$, in \cite[Theorem 1.7]{chatterjee2016multiple} for $\lambda$ of the form $\lambda_{o,y} =\mathcal{O}(\|y\|^{-\alpha})$ for some $\alpha>2d+1$. We anticipate that it can also be extended to the LRCP,  and possibly it may even be sharpened to hold for $\lambda$ with decay rate $\alpha=2d+1$. In that case it is only the (AML) bound of those above that fails. However, we do not expect the shape theorem to hold when $\alpha< 2d+1$ since it then fails for the first-passage percolation process, as also concluded in \cite{chatterjee2016multiple}.


\paragraph{Acknowledgement and Funding information.} This work has been supported by the Mathematics for Sustainable Development (MATH4SDG) project, which is a research and development project running in the period 2021–2026 at Makerere University-Uganda, the University of Dar es Salaam-Tanzania, and the University of Bergen-Norway, funded through the NORHED II program under the Norwegian Agency for Development Cooperation (NORAD, project no 68105).

Frank Namugera also acknowledge financial support from Erasmus +, and thanks the University of Groningen for hospitality during the writing of the paper. This work
is part of his PhD thesis, which he obtained at Makerere University in December 2025. 

\end{document}